\documentclass[a4paper, reqno]{article} 

\usepackage[utf8]{inputenc}
\usepackage[T1]{fontenc}

\usepackage[english]{babel}

\usepackage[margin=1.2in]{geometry}
\usepackage{enumitem}
\usepackage[bookmarks = true]{hyperref}
\usepackage{xcolor}
\setcounter{tocdepth}{1}

\usepackage{amsmath, amsthm, amssymb, bbm}
\usepackage[abbrev]{amsrefs}
\usepackage{unicode-math}
\usepackage[small,nohug,heads=vee]{diagrams}
\diagramstyle[labelstyle=\scriptstyle]

\theoremstyle{plain}
\newtheorem{thm}{Theorem}[section]
\newtheorem*{thm*}{Theorem}
\newtheorem{lem}[thm]{Lemma}
\newtheorem{prop}[thm]{Proposition}
\newtheorem{cor}[thm]{Corollary}

\newtheorem{propdef}[thm]{Proposition/Definition}

\theoremstyle{definition}
\newtheorem{rmk}[thm]{Remark}

\newtheorem{defn}[thm]{Definition}
\newtheorem{ex}[thm]{Example}

\newtheorem{construction}[thm]{Construction}

\numberwithin{equation}{section}


\newcommand{\overbar}[1]{\mkern 1.5mu\overline{\mkern-1.5mu#1\mkern-1.5mu}\mkern 1.5mu}

\newcommand{\ob}{\overbar}


\newcommand{\mr}{\mathrm}

\newcommand{\wt}{\widetilde}



\newcommand{\Supp}{\operatorname{Supp}}



\newcommand{\codim}{\operatorname{codim}}


\newcommand{\tensor}{\otimes}

\newcommand{\iso}{\cong}



\newcommand{\mbR}{\mathbb{R}}

\newcommand{\mbZ}{\mathbb{Z}}


\newcommand{\mcS}{\mathcal{S}}
\newcommand{\mcT}{\mathcal{T}}



\begin{document}

\title{On Tropical Intersection Theory}
\author{Andreas Mihatsch}
\date{August 28, 2022}
\maketitle
\setlength{\parskip}{-1mm}
\tableofcontents
\setlength{\parskip}{0mm}

\begin{abstract}
We develop a tropical intersection formalism of forms and currents that extends classical tropical intersection theory in two ways. First, it allows to work with arbitrary polyhedra, also non-rational ones. Second, it allows for smooth differential forms as coefficients. The intersection product in our formalism can be defined through the diagonal intersection method of Allermann--Rau or the fan displacement rule. We prove with a limiting argument that both definitions agree.
\end{abstract}

\section{Introduction}
\label{ss:intro}
For their ``Tropical approach to non-archimedean Arakelov theory'' \cite{GK}, Gubler--Künnemann combine tropical intersection theory and smooth differential forms into their formalism of so-called \emph{$δ$-forms}.
They use these to develop a calculus of Green currents on non-archimedean spaces that is related to intersection theory on formal models.
The strength of their approach is that $δ$-forms are simpler to work with than formal models, leading to a computationally accessible handle for certain arithmetic intersection problems.

The present paper contributes to these ideas through the development of a more general and concise theory of $δ$-forms.
This is a purely tropical endeavor: $δ$-Forms are a natural generalization of tropical cycles and have the same formal properties.
For example, they admit pull-backs, push-forwards and a tropical intersection product called the $\wedge$-product.
$δ$-Forms also encompass Lagerberg's smooth forms \cite{Lag} and obey the same kind of differential calculus.
They furthermore come with a \emph{boundary operator} that generalizes the frequently used corner locus constructions of Esterov \cite{Esterov} and Francois \cite{Francois}, also cf. Gubler--Künnemann \cite{GK}.
Moreover, our formalism allows \emph{non-rational} polyhedra throughout. For tropical cycles, this generalization had already been obtained by Esterov \cite{Esterov}.

We now provide a more detailed description of $δ$-forms and our results. \emph{Smooth forms} are always meant in the sense of Lagerberg in the following, cf. \cite{Lag} or §\ref{ss:smooth_forms}. Recall that a \emph{current} is a continuous linear form on the space of smooth forms with compact support. A smooth form $α$ on $\mbR^n$ and a polyhedron $σ\subseteq \mbR^n$ define a \emph{current of integration} $(α\wedge σ)(η) := \int_{σ} α\wedge η$. (The polyhedron really needs to be weighted for this to work which will be explained below.) A current is called \emph{polyhedral} if it is a locally finite sum $\sum_{i\in I}α_i \wedge σ_i$ of such integration currents. In particular, polyhedral currents are entirely combinatorial objects. The following is our main definition.
\begin{defn}\label{def:intro_currents}
A \emph{$δ$-form} on $\mbR^n$ is a polyhedral current $T$ on $\mbR^n$ such that both derivatives $d'T$ and $d''T$ are again polyhedral.
\end{defn}
The differentials $d'$ and $d''$ here are taken in the sense of currents, i.e. as the duals of $d'$ and $d''$ for smooth forms. $δ$-Forms turn out to be stable under $d'$ and $d''$. Additional structure is then provided by defining a $δ$-form $T = \sum_{i\in I} α_i\wedge σ_i$ to be \emph{of tridegree $(p,q,r)$} if the $α_i$ may be chosen of bidegree $(p,q)$ and the $σ_i$ of codimension $r$. Then $d'$ naturally decomposes as $d' = d'_P - \partial'$, where $d'_P$ is trihomogeneous of tridegree $(1, 0, 0)$ and $\partial'$ trihomogeneous of tridegree $(0, -1, 1)$. The first summand $d'_P$ is the so-called \emph{polyhedral derivative} $d'_P(α\wedge σ) = (d'α)\wedge σ$ of Gubler--Künnemann, while $\partial'$ is the above-mentioned boundary operator. The latter is closely related to boundary integration of differential forms and to the corner locus construction, cf. \eqref{eq:div_inter_simplified}. A similar decomposition $d'' = d''_P - \partial''$ exists for $d''$.

Next, we come to the combinatorial description of $δ$-forms.
\begin{thm}\label{thm:intro}
A polyhedral current $T = \sum_{i\in I} α_i\wedge σ_i$ is a $δ$-form if and only if the datum $(α_i, σ_i)_{i\in I}$ is \emph{balanced} in the sense of tropical geometry.
\end{thm}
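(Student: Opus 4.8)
The plan is to compute the currents $d'T$ and $d''T$ directly via a tropical Stokes formula and to reduce the statement to a local analysis at the codimension-$(r+1)$ faces (ridges) of the complex underlying $(\alpha_i,\sigma_i)_{i\in I}$. Write $d'=d'_P-\partial'$ and $d''=d''_P-\partial''$ as in the introduction. Since $d'_P T=\sum_i(d'\alpha_i)\wedge\sigma_i$ and $d''_P T=\sum_i(d''\alpha_i)\wedge\sigma_i$ are polyhedral by construction, $T$ is a $\delta$-form if and only if both boundary currents $\partial'T$ and $\partial''T$ are polyhedral. Everything therefore reduces to understanding these two boundary operators.

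First I would make the Stokes computation explicit. For a single integration current $\alpha\wedge\sigma$ and a compactly supported test form $\eta$, the Leibniz rule together with Stokes' theorem on $\sigma$ gives $\int_\sigma \alpha\wedge d'\eta = \pm\int_\sigma(d'\alpha)\wedge\eta \mp \int_{\partial\sigma}\alpha\wedge\eta$. The boundary integral decomposes as a sum over the facets $\tau\prec\sigma$, and on each such $\tau$ it is governed by the primitive normal vector $\omega_{\sigma/\tau}$ of $\sigma$ along $\tau$. Summing over $i$ and collecting terms facet by facet, I obtain that $\partial'T$ is supported on the codimension-$(r+1)$ skeleton and that, near the relative interior of each ridge $\tau$, it equals $\sum_{i:\tau\prec\sigma_i}$ of the contraction of $\alpha_i|_\tau$ by $\omega_{\sigma_i/\tau}$ in the $d'$-direction; the analogous formula holds for $\partial''T$ in the $d''$-direction.

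The crux is then a local statement at a fixed ridge $\tau$. I would split the facet contribution into a tangential part---a genuine smooth form on $\tau$, which always yields an honest integration current $\beta_\tau\wedge\tau$---and a transverse part, carrying a $d'$- (resp.\ $d''$-) differential in a direction normal to $L_\tau$. The key lemma to prove is that such a transverse contribution is a polyhedral current only if it vanishes: a current supported on $\tau$ acting on test forms through a differential normal to $\tau$ is a first-order normal distribution and cannot be realized as integration of a smooth form over any face (this is already visible for a single non-balanced segment, whose boundary current at an endpoint would require a coefficient form of negative degree). Making this rigorous is the main obstacle; it demands a structural description of polyhedral currents transverse to a face. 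One localises near $\tau$, uses a Taylor expansion of the smooth coefficients in the normal directions to separate the order-zero (tangential) piece from genuine normal-derivative pieces, and shows that no cancellation among distinct smooth coefficients can convert a nonzero transverse jet into an order-zero current.

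Granting this lemma, $\partial'T$ is polyhedral if and only if, at every ridge $\tau$, the transverse component of $\sum_{i:\tau\prec\sigma_i}\omega_{\sigma_i/\tau}\otimes\alpha_i|_\tau$ vanishes modulo $L_\tau$, and similarly for $\partial''T$. These two vanishing conditions are exactly the tropical balancing condition for the datum $(\alpha_i,\sigma_i)_{i\in I}$: in the weightless case $\alpha_i\equiv 1$ they recover the classical requirement $\sum_i\omega_{\sigma_i/\tau}\in L_\tau$, and in general they assert that the weighted normal sum of the restricted coefficients lies in $L_\tau$. Combining the $d'$- and $d''$-analyses then yields the equivalence in both directions and completes the proof.
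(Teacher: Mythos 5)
Your plan is structurally the same as the paper's actual proof of Thm.~\ref{thm:delta_forms}: write $d'=d'_P-\partial'$, localize $\partial'T$ at the codimension-$(r+1)$ polyhedra via Stokes, split each such contribution into a tangential part (which is an honest integration current) and a transverse part, and show that a nonzero transverse part can never be polyhedral. The problem is that at precisely the step you yourself call ``the main obstacle'' you have no proof: the ``first-order normal distribution'' claim, with its Taylor-expansion and ``no cancellation among distinct smooth coefficients'' sketch, is the whole content of the hard direction, and it is left unproved. The paper closes this gap with a short duality argument that your proposal is missing. First one notes that if $S$ is a polyhedral current with $\Supp S\subseteq C$ for a polyhedral set $C$, then $S(η)=0$ for every test form $η$ with $η\vert_C=0$. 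Then, given a codimension-$(r+1)$ polyhedron $τ$ and an affine-linear function $z$, constant on $τ$, witnessing the failure of the balancing condition \eqref{eq:def_balancing_lin_functions}, i.e.\ $β:=\sum_{τ\subset σ}(\partial z/\partial n_{σ,τ})\,α_σ\vert_τ\neq 0$, one takes the test form $η=\pm\, d''z\wedge\ob{η}$, where $\ob{η}$ is a bump form meeting the codimension-$(r+1)$ skeleton only inside $τ$ and satisfying $\int_{[τ,µ_τ]}β\wedge\ob{η}\neq 0$. Since $d''z\vert_{τ'}=0$ for every $τ'$ in that skeleton, $η$ vanishes on the set supporting $d'T-d'_PT$, and yet $(d'T-d'_PT)(η)=\pm\int_{[τ,µ_τ]}β\wedge\ob{η}\neq 0$ because $(d''z,n''_{σ,τ})=\partial z/\partial n_{σ,τ}$. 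This single explicit test form is what makes your key lemma rigorous; without it, the direction ``$δ$-form $\Rightarrow$ balanced'' is not established.

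There is a second, independent gap: the theorem is stated for arbitrary polyhedral currents, while your ridge analysis tacitly assumes the $σ_i$ all have one fixed codimension $r$. The tridegree components $T^{p,q,r}$ and $T^{p-1,q-1,r+1}$ have the same bidegree as currents, so polyhedrality of $d'T$ does not formally decompose into polyhedrality of each $d'T^{p,q,r}$; one must exclude cancellation between the non-polyhedral boundary contributions coming from polyhedra of different codimensions. The paper handles this in step (4) of its proof by a minimality argument: if $r_0$ is minimal with $d'T^{r_0}$ not polyhedral, the witness $β$ above lives on an $(n-r_0-1)$-dimensional polyhedron, whereas the terms $d'T^r-d'_PT^r$ with $r>r_0$ are supported in strictly smaller dimension, so one can localize near a point of $\Supp β$ avoiding them and derive a contradiction. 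Your proposal needs this reduction (or an equivalent) to prove the statement as stated. A minor point in addition: with the paper's sign conventions the $\partial'$-boundary terms involve contraction with $n''_{σ,τ}$, cf.~\eqref{eq:def_boundary_integral_facet}, i.e.\ the $d''$-copy of the normal vector, not a ``$d'$-direction'' contraction as you write; this is harmless for the structure of the argument but matters if you want the balancing condition to come out of the computation with the correct degrees.
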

We formulate the relevant balancing condition in \eqref{eq:intro_balanced} below. Note that Thm. \ref{thm:intro} has precursors in the literature: Lagerberg \cite{Lag}*{Prop. 4.7}, Gubler \cite{Gub_forms_currents}*{Prop. 3.8} and Gubler--Künnemann \cite{GK}*{Prop. 2.16} (in successive level of generality) essentially prove it whenever the $α_i$ are smooth functions. Cast in our terminology, they show that the tropical cycles of codimension $r$ with smooth coefficients are exactly the $δ$-forms of tridegree $(0,0,r)$.

Thm. \ref{thm:intro} makes $δ$-forms behave like tropical cycles and we show that Allermann--Rau's construction of an intersection product \cite{AR} goes through without substantial change. This leads to our main result which is clearly inspired by Gubler--Künnemann's \cite{GK}*{Prop. 4.15}.
\begin{thm}\label{thm:intro2}
There is a graded-commutative $\wedge$-product of $δ$-forms that extends the $\wedge$-product of smooth forms and the intersection product of tropical cycles. The derivatives $d'$, $d''$, the polyhedral derivatives $d'_P$, $d''_P$ and the boundary derivatives $\partial'$, $\partial''$ all satisfy the Leibniz rule for $\wedge$.
\end{thm}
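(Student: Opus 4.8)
The plan is to construct the $\wedge$-product by Allermann--Rau's diagonal method and to deduce every assertion from the corresponding, more transparent, properties of an auxiliary exterior product. First I would introduce the exterior product: for $δ$-forms $S=\sum_i α_i\wedge σ_i$ on $\mbR^m$ and $T=\sum_j β_j\wedge τ_j$ on $\mbR^n$, set
\[
S\times T:=\sum_{i,j}(α_i\boxtimes β_j)\wedge(σ_i\times τ_j)\quad\text{on }\mbR^{m+n},
\]
where $α_i\boxtimes β_j:=\mathrm{pr}_1^*α_i\wedge\mathrm{pr}_2^*β_j$ and $σ_i\times τ_j$ is the product polyhedron. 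Using the balancing characterization of Thm.~\ref{thm:intro}, I would check that the product of two balanced data is balanced, so that $S\times T$ is again a $δ$-form, and that the definition is independent of the chosen presentations. At this stage the Leibniz rules are immediate, since all six operators act factorwise on a product: one obtains $d'(S\times T)=d'S\times T+(-1)^{\deg S}S\times d'T$ and the analogous identities for $d''$, $d'_P$, $d''_P$, $\partial'$, $\partial''$, with the Koszul sign governed by the total current degree of $S$.

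Next I would define the product on $\mbR^n$ by $S\wedge T:=\Delta^*(S\times T)$, where $\Delta\colon\mbR^n\hookrightarrow\mbR^n\times\mbR^n$ is the diagonal and $\Delta^*$ is the $n$-fold iteration of intersection with the piecewise-linear functions cutting out the hyperplanes $\{x_k=y_k\}$, i.e.\ an iterated corner-locus operation. Two points need proof: that this iteration stays within $δ$-forms — which again follows from Thm.~\ref{thm:intro}, the corner locus of balanced data being balanced — and that it is independent of the order in which the $n$ coordinates are processed. The latter is the commutativity of successive Cartier-divisor intersections of \cite{AR}, here in the presence of smooth coefficients; I expect it to be the technical core of the construction.

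Graded-commutativity is then formal: the coordinate swap $\mathrm{sw}\colon\mbR^n\times\mbR^n\to\mbR^n\times\mbR^n$ satisfies $T\times S=\pm\,\mathrm{sw}^*(S\times T)$ with a Koszul sign from the bidegrees of the coefficients and fixes the diagonal, so $T\wedge S=\Delta^*(T\times S)=\pm\Delta^*\mathrm{sw}^*(S\times T)=\pm\Delta^*(S\times T)=\pm S\wedge T$. For the two compatibility claims I would argue separately: on smooth forms, i.e.\ $δ$-forms of tridegree $(p,q,0)$, a local computation on a single maximal cell identifies $\Delta^*(α\boxtimes β)$ with the Lagerberg wedge $α\wedge β$; on tropical cycles, i.e.\ tridegree $(0,0,r)$, the construction is by design the Allermann--Rau diagonal intersection, so agreement follows from Thm.~\ref{thm:intro}.

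Finally, the Leibniz rules for $\wedge$ would be deduced from those for $\times$ together with the behaviour of the derivatives under $\Delta^*$. For the polyhedral derivatives this is routine: $d'_P$ and $d''_P$ differentiate only the smooth coefficient and commute with pull-back along affine maps, so the factorwise rule for $\times$ descends directly. The genuine difficulty is the rule for $\partial'$ and $\partial''$: since $\Delta^*$ is itself an iterated corner-locus and $\partial'$ is again a corner-locus/boundary operation, the identity $\partial'(S\wedge T)=\partial'S\wedge T+(-1)^{\deg S}S\wedge\partial'T$ amounts to commuting two corner-locus operations and must be verified by a local, cell-by-cell balancing computation with careful sign bookkeeping. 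Once $\partial'$ and $\partial''$ are settled, the Leibniz rule for $d'=d'_P-\partial'$ and $d''=d''_P-\partial''$ follows by linearity. I expect this boundary Leibniz rule — equivalently the order-independence above — to be the main obstacle of the theorem.
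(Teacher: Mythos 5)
Your construction is, in outline, the same as the paper's (Def.~\ref{def:delta_form_inter_prod}): form the exterior product $S\times T$, intersect with the $n$ diagonal divisors $d'd''\max\{x_i,y_i\}$, and read the result off along the first projection. Graded-commutativity via the factor swap and the two compatibility claims are also handled as in the paper. Where you genuinely diverge is in how the two technical pillars are established, and the comparison is instructive. First, closure under $δ$-forms: you propose a direct balancing verification that the corner locus of balanced data is balanced, saying this ``follows from Thm.~\ref{thm:intro}''; strictly, Thm.~\ref{thm:intro} only \emph{reduces} the claim to that balancing check, which is itself the nontrivial content (Allermann--Rau's argument redone with smooth coefficients). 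The paper sidesteps this entirely by \emph{defining} the divisor intersection as $D\cdot T := d'd''(φ\wedge T) - φ\wedge d'd''T + d''φ\wedge d'T - d'φ\wedge d''T$ for a piecewise linear $φ$ with $D = d'd''φ$ (Def.~\ref{def:CD_intersection}): every term is manifestly a $δ$-form, independence of $φ$ is the one-line identity \eqref{eq:trivial_div_identity}, and the combinatorial corner-locus description is only derived afterwards (Lem.~\ref{lem:div_inter_combinatorial}). Second, the Leibniz rules: you plan to prove the rule for $\partial'$, $\partial''$ by a cell-by-cell sign computation (``commuting two corner loci'') and then assemble $d' = d'_P - \partial'$, and you expect this to be the main obstacle. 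The paper inverts this order, and the obstacle dissolves: from $C\wedge T = -\partial'(d''φ\wedge T) - d''φ\wedge\partial'T$ (Lem.~\ref{lem:div_inter_simplified}) and $\partial'\partial' = 0$ one gets $\partial'(C\wedge T) = C\wedge\partial'T$ for divisors in two lines; the Leibniz rule for $d'$, $d''$ on $S\wedge T$ then follows from the exterior-product rule \eqref{eq:tensor_product_current_derivative_d}, and the rules for $d'_P$, $d''_P$, $\partial'$, $\partial''$ drop out by separating tridegrees, since $\wedge$ is trihomogeneous and the four operators have distinct tridegrees. (Your justification that $d'_P$ ``commutes with pull-back along affine maps'' is also loose, since your $\Delta^*$ is an iterated corner locus rather than an affine pull-back; the correct statement, that $d_P$ commutes with divisor intersection, is immediate from Lem.~\ref{lem:div_inter_combinatorial}.) So your route is viable --- the deferred computations are of the same nature as the paper's Lem.~\ref{lem:div_inter_commutative} and would go through --- but it front-loads heavy combinatorics that the paper's formal calculus (Lem.~\ref{lem:product_pws_properties}, Lem.~\ref{lem:div_inter_simplified}) renders unnecessary; in both treatments the only step that truly requires a direct symmetric cell computation is the commutativity of successive divisor intersections, your ``order-independence''.
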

A more precise characterization of the $\wedge$-product may be found in the main text, cf. Thm. \ref{thm:main}. We also show that the $\wedge$-product can be computed by the \emph{fan displacement rule}, cf. Prop. \ref{prop:fan_displacement}. Recall that for intersections of tropical cycles, this rule goes back to Fulton--Sturmfels \cite{FS_tropical} and Mikhalkin \cite{Mikhalkin}. Its equality with Allermann--Rau's intersection product was shown independently by Rau \cite{Rau_thesis} and Katz \cite{Katz}. Our proof is similar to the combinatorial one of Rau and based on the observation that the $\wedge$-product suitably commutes with limits, cf. §\ref{ss:fan_displacement}.

We next explain the tropical formalism for possibly non-rational polyhedra. For a polyhedron $σ\subseteq \mbR^n$, denote by $N_σ\subseteq \mbR^n$ the linear space spanned by all $x-y,\ x,y\in σ$. Given a facet $τ\subset σ$, the subspace $N_τ\subset N_σ$ is of codimension $1$. If $σ$ is \emph{rational}, then $(N_τ\cap \mbZ^n) \subset (N_σ\cap \mbZ^n)$ is a sublattice of corank $1$ and a \emph{normal vector} for $τ\subset σ$ is any vector $n_{σ,τ}\in N_σ\cap \mbZ^n$ that generates $(N_σ\cap \mbZ^n) / (N_τ\cap \mbZ^n)$ and points in direction of $σ$.
For the general situation, we consider \emph{weighted} polyhedra instead. A weight for $σ$ is simply a generator $µ_σ\in \det N_σ$ \emph{up to sign}. Equivalently, it is a choice of Haar measure on $N_σ$. Given a facet inclusion $τ\subset σ$ and respective weights $µ_τ$ and $µ_σ$, a normal vector is any $n_{σ,τ}\in N_σ$ that satisfies $µ_σ = µ_τ\wedge n_{σ,τ}$ and points in direction of $σ$.
The two definitions are linked by the observation that every rational polyhedron $σ$ has a natural weight, namely the unique-up-to-sign generator of $\det_{\mbZ} (N_σ\cap \mbZ^n)$. The balancing condition \eqref{eq:intro_balanced} in Thm. \ref{thm:intro} is now a literal adaption of the classical balancing condition.
\begin{defn}\label{def:intro_balanced}
Consider a polyhedral complex $\mcT$, weights $(µ_σ)_{σ\in \mcT}$ for its polyhedra and smooth forms $(α_σ)_{σ\in \mcT}$, $α_σ\in A(σ)$. Here, $A(σ)$ denotes the smooth forms on $σ$. This datum is called \emph{balanced} if for all $τ\in \mcT$,
\begin{equation}\label{eq:intro_balanced}
\sum_{σ\in \mcT,\ τ\subset σ\ \text{a facet}} α_σ\vert_τ \tensor n_{σ,τ}\ \ \ \text{lies in}\ A(τ)\tensor_{\mbR} N_τ.
\end{equation}
\end{defn}
We next elucidate on the intersection theory of weighted polyhedra. Recall that given two properly intersecting \emph{rationally defined} subspaces $N_1,N_2\subseteq \mbR^n$, one defines their intersection multiplicity as the lattice index $\left[\mbZ^n:(N_1\cap \mbZ^n) + (N_2\cap \mbZ^n)\right]$. In the not necessarily rational case, still assuming proper intersection, one instead considers weights $µ_1$, $µ_2$ for $N_1$, $N_2$ and endows the intersection $N_1\cap N_2$ with the unique weight $ν$ such that $µ_1\tensor µ_2 = ν\tensor µ_{\mr{std}}$ under the canonical-up-to-sign identification $\det(V_1 \oplus V_2) = \det\left((V_1\cap V_2)\oplus \mbR^n\right)$. Here $µ_{\mr{std}}$ is the standard weight on $\mbR^n$. This rule extends to a full description of the $\wedge$-product of transversally intersecting $δ$-forms and underlies the fan displacement rule.

Finally, a weight $µ$ for $σ$ is also the precise datum needed to define the integral $\int_{[σ,µ]}η$ of a (compactly supported) form $η$ over $σ$. So in the definition of polyhedral current above, all polyhedra were silently weighted. For this natural reason, weights implicitly occur in Lagerberg \cite{Lag} and Chambert-Loir--Ducros \cite{CLD}. In fact, the \emph{calibrages} from \cite{CLD} are the same as our weights with an additional sign.

Tropical intersection theory has also been extended from $\mbR^n$ to more general combinatorial spaces. We will not address such questions here but take them up in our related work \cite{Mih_delta_nonarch}. More precisely, we develop there a theory of $δ$-forms on so-called tropical spaces with applications to non-archimedean Arakelov theory.

\subsubsection*{Layout}

§\ref{s:forms_and_currents} contains a summary of Lagerberg's theory of differential forms and introduces the formalism of weights, normal vectors and fiber integration. §\ref{s:delta_forms} is dedicated to the definition of $δ$-forms and to the proof of Thm. \ref{thm:intro}. §\ref{s:intersection_theory} contains the main result Thm. \ref{thm:intro2} and some additional properties of $δ$-forms. The fan displacement rule is Prop. \ref{prop:fan_displacement} and will be proved in §\ref{ss:fan_displacement}.

\subsection*{Acknowledgements}

I am grateful to W. Gubler, K. Künnemann and P. Scholze for comments on an earlier draft of the present article. I thank the referee for a careful reading of the article and many helpful suggestions.

\section{Forms and Currents}
\label{s:forms_and_currents}
\subsection{Smooth Forms}
\label{ss:smooth_forms}

Let $C^\infty(\mbR^n)$ and $\Omega^p(\mbR^n)$ denote the smooth functions and ``usual'' real smooth $p$-forms on $\mbR^n$. We fix the hosting space $\mbR^n$ for now and simply write $C^\infty$ and $\Omega^p$. The \emph{smooth forms} in this paper, whose definition is due to Lagerberg \cite{Lag}, are the elements of the exterior algebra
\begin{equation}\label{eq:def_pq_form}
A := A(\mbR^n) := \bigwedge\nolimits^{\!*}_{\, C^\infty} \big(\Omega^1\oplus \Omega^1\big).
\end{equation}
There is a bigrading $A = \bigoplus_{p,q} A^{p,q}$, where $A^{p,q}$ is the piece $\Omega^p\tensor_{C^\infty} \Omega^q$. Elements $α\in A^{p,q}$ are called bihomogeneous of bidegree $(p,q)$ and homogeneous of degree $\deg α = p + q$.

Being an exterior algebra, $A$ is endowed with a natural $\wedge$-product. It is bihomogeneous in the sense that $A^{p,q}\wedge A^{s,t}\subseteq A^{p+s, q+t}$. It is also graded-commutative, meaning that
\begin{equation}\label{eq:graded_commutativity_forms}
α\wedge β = (-1)^{\deg α \deg β} β\wedge α
\end{equation}
whenever $α$ and $β$ are homogeneous.

We use the terminology of \cite{CLD} for differential operators. Write $d_{\mr{std}}\colon  C^\infty \to \Omega^1$ for the usual differential. Given $f\in C^\infty$, we put
\begin{equation}\label{eq:def_d}
d'f = \left(d_{\mr{std}} f,\, 0\right),\ \ \ d''f = \left(0,\, d_{\mr{std}} f\right)\ \ \ \in \Omega^1 \oplus \Omega^1.
\end{equation}
Denoting by $x_1,\ldots,x_n$ the standard coordinates on $\mbR^n$, any $α\in A^{p,q}$ is now in a unique way of the form
$$α = \sum_{I,J\subseteq \{1,\ldots,n\},\ |I| = p,\ |J| = q} φ_{I,J}(x_1,\ldots,x_n)d'x_I\wedge d''x_J$$
with $φ_{I,J}\in C^\infty$. The above $d'\colon C^\infty\to A^{1,0}$ and $d''\colon C^\infty\to A^{0,1}$ extend to $A$ in a unique way that satisfies the \emph{Leibniz rule}
\begin{equation}\label{eq:Leibniz_forms}
d(α\wedge β) = dα\wedge β + (-1)^{\deg α} α\wedge dβ,\ \ d\in \{d', d''\}.
\end{equation}
Concretely, this extension is given as
$$d(φd'x_I\wedge d''x_J) = \sum_{i = 1}^n \frac{\partial φ}{\partial x_i} dx_i\wedge d'x_I\wedge d''x_J,\ \ \ d\in \{d',d''\}.$$
The so-defined $d',d''\colon A\to A$ are bihomogeneous of bidegree $(1,0)$ resp. $(0,1)$.

Given an affine-linear map $f\colon \mbR^n\to \mbR^m$, there is a pull-back map $f^*\colon A^{p,q}(\mbR^m)\to A^{p,q}(\mbR^n)$ which stems from usual pull-back of differential forms. It commutes with $\wedge$, $d'$ and $d''$.

The integral of an $(n,n)$-form $η$ with compact support is defined as follows. Write $η = φd'x_1\wedge d''x_1\wedge \ldots\wedge d'x_n\wedge d''x_n$ and put
\begin{equation}\label{eq:def_integral_of_form}
\int_{\mbR^n} η := \int_{\mbR^n} φ
\end{equation}
where the right hand side is defined in terms of the Lebesgue integral for the standard volume on $\mbR^n$. It is immediate that, for an affine linear map $f\colon \mbR^n\to \mbR^n$,
\begin{equation}
\label{eq:integral_transformation_rule}
\int_{\mbR^n} f^*η = |\det f|\int_{\mbR^n} η.
\end{equation}
There is, in particular, no implicit choice of orientation involved. This also reflects in the fact that the forms $d'x_i\wedge d''x_i$ have degree $2$, hence pairwise commute, so the function $φ$ in \eqref{eq:def_integral_of_form} is independent of coordinate ordering.

Let $D = D(\mbR^n)$ denote the space of \emph{currents}, i.e. the topological dual of compactly supported forms $A_c$, cf. \cite{Lag}*{§1.1}. The topological aspect of the definition will never play a role in this paper. Write $D = \bigoplus_{p,q} D^{p,q}$ where $D^{p,q}$ is dual to $A^{n-p,n-q}_c$. There is an injective map $A^{p,q}\to D^{p,q},\ α\mapsto [α]$, defined by
$$[α](η) := \int_{\mbR^n} α\wedge η.$$
With the following sign conventions one defines derivatives $d'\colon D^{p,q}\to D^{p+1,q}$, $d''\colon D^{p,q}\to D^{p,q+1}$ as well as a product $\wedge\colon A^{p,q}\times D^{s,t} \to  D^{p+s, q+t}$:
\begin{equation}\label{eq:signs_for_currents}
\begin{aligned}
(d T)(η) & = (-1)^{\deg T + 1}T(d η),\ \ d\in \{d',d''\},\\
(α\wedge T)(η) &= (-1)^{\deg α \deg T}T(α\wedge η).
\end{aligned}
\end{equation}
Then it follows that, for homogeneous $α$, $β$, $T$ and $d\in \{d',d''\}$,
\begin{equation}\label{eq:forms_map_to_currents_nicely}
\begin{aligned}
(d[α])(η) &= (-1)^{\deg α + 1}\int α \wedge dη = \int dα \wedge η = [dα](η),\\
(α\wedge [β])(η) &= (-1)^{\deg α \deg β}\int β\wedge α \wedge η = \int α\wedge β \wedge η = [α\wedge β](η),
\end{aligned}
\end{equation}
for every compactly supported test form $η$, so the inclusion $A\to D$ commutes with $\wedge$, $d'$ and $d''$. Furthermore, the Leibniz rule extends:
\begin{equation}\label{eq:Leibniz_current}
d(α\wedge T) = dα\wedge T + (-1)^{\deg α} α\wedge dT,\ \ d\in \{d',d''\}.
\end{equation}
Let $f\colon \mbR^n \to  \mbR^m$ be an affine linear map. Then there is a push-forward map $D^{p,q}_c (\mbR^n) \to  D^{p + m -n, q +m -n}(\mbR^m)$ from currents with compact support. It is defined by
\begin{equation}\label{eq:push_for_currents}
(f_*T)(η) = T(f^*η).
\end{equation}
Since $f^*(dη) = d(f^*η)$ for $η\in A_c(\mbR^m)$ and $d\in \{d',d''\}$, one obtains $f_*(dT) = d(f_*T)$ by duality, cf. \eqref{eq:signs_for_currents}.

\subsection{Polyhedral currents}
\label{ss:polyhedral_currents}

By \emph{polyhedron} in $\mbR^n$ we mean a subset $σ$ that may be written as the intersection of finitely many (not necessarily rational) half-spaces. Denote by $N_σ$ the linear space spanned by all $x-y$, $x,y\in σ$ and by $M_σ := N_σ^\vee$ its $\mbR$-dual. The \emph{dimension} of $σ$ is the dimension of $N_σ$.

A \emph{polyhedral complex} is a locally finite set of polyhedra $\mcT$ which is stable under taking faces and is such that $σ_1\cap σ_2$ is a face of both $σ_1$ and $σ_2$ for every $σ_1,σ_2\in \mcT$. The $d$-dimensional (resp. $r$-codimensional in $\mbR^n$) polyhedra of a polyhedral complex are denoted by $\mcT_d$ (resp. $\mcT^r$).

Let $C^\infty(σ)$ denote the smooth functions on $σ$, i.e. all $φ\colon σ\to \mbR$ such that there is some smooth function $\wt {φ}\in C^\infty(\mbR^n)$ with $\wt{φ}\vert_σ = φ$. Smooth $(p,q)$-forms on $σ$ are defined by an analogous restriction process. Let $L_σ = x + N_σ,\ x\in σ,$ be the smallest affine linear space containing $σ$. There is a well-defined space of $(p,q)$-forms $A^{p,q}(L_σ)$ because $(p,q)$-forms transform naturally under affine linear maps. Then
$$A^{p,q}(σ) := C^\infty(σ)\tensor_{C^\infty(L_σ)}A^{p,q}(L_σ).$$
Equivalently, it is the space of smooth $(p,q)$-forms on the interior $σ^\circ$ of $σ$ in $L_σ$ that come by restriction from $A^{p,q}(L_σ)$. Note that $A^{p,q}(σ) = 0$ if $\dim σ < \max\{p,q\}$ and that there is a restriction map $A^{p,q}(σ)\to A^{p,q}(τ)$, $α\mapsto α\vert_τ$ for every inclusion of polyhedra $τ\subseteq σ$ which commutes with $\wedge$, $d'$ and $d''$. The following definition of weight is inspired by Chambert-Loir--Ducros' definition of a calibration, cf. \cite{CLD}*{§1.5}.

\begin{defn}\label{def:weight}
A \emph{weight} on a polyhedron $σ$ is a generator $µ\in \det N_σ$ up to sign. The convention for $0$-dimensional polyhedra here is that the determinant of the $0$-space is $\mbR$ itself and that a weight is a \emph{positive} scalar. The pair $[σ,µ]$ is called a \emph{weighted polyhedron}. A \emph{weighted polyhedral complex} is the datum of a polyhedral complex $\mcT$ together with weights $(µ_σ)_{σ\in \mcT}$ for all its polyhedra.
\end{defn}
Equivalently, a weight is the choice of a Haar measure for $N_σ$. We denote by $µ^\vee \in \det M_σ$ the dual of $µ$.
\begin{ex}\label{ex:weights}
Every \emph{rational} polyhedron $σ\subseteq \mbR^n$ has a natural weight with respect to the lattice $\mbZ^n\subseteq \mbR^n$. Namely $N_σ\cap \mbZ^n$ is a lattice in $N_σ$ and the choice of a generator $µ_0 \in \det_{\mbZ} (N_σ\cap \mbZ^n)$ is unique up to sign. Every other weight is in a unique way of the form $λ µ_0$, $λ>0$.
\end{ex}

Let $[σ,µ]$ be a weighted polyhedron of dimension $d$ and let $η\in A_c^{d,d}(σ)$. Pick any coordinates $x_1,\ldots,x_d\in M_σ$ such that $µ^\vee = x_1\wedge\ldots\wedge x_d$ and write $η = φd'x_1\wedge d''x_1 \wedge \ldots \wedge d'x_d\wedge d''x_d$. (The $x_i$ are defined up to translation on $L_σ$, so their differentials $d'x_i$ and $d''x_i$ are canonical.) Then set
\begin{equation}
\int_{[σ,µ]} η := \int_{σ} φ
\end{equation}
where the right hand side is the Lebesgue integral with respect to the volume defined by the choice of isomorphism $(x_1,\ldots,x_d)\colon N_σ\iso \mbR^d$. The transformation rule \eqref{eq:integral_transformation_rule} ensures that this is well-defined. In this way, $[σ,µ]$ is viewed as element of $D^{r,r}$, where $r = n-d$ is the codimension of $σ$.

The following definitions are due to Gubler--Künnemann, cf. \cite{GK}*{Def. 2.3}. A \emph{polyhedral current} is a current that is a locally finite sum of currents of the form $α\wedge [σ,µ]$. Deviating from their notation, we write $P \subseteq D$ for the space of all polyhedral currents and $P^{p,q,r}\subseteq D^{p+r, q+r}$ for those which are locally finite sums of $α\wedge [σ,µ]$ with $σ$ of codimension $r$ and $α\in A^{p,q}(σ)$. One easily checks the direct sum decomposition $P = \bigoplus_{p,q,r} P^{p,q,r}.$ We also say that elements of $P^{p,q,r}$ are trihomogeneous of tridegree $(p,q,r)$.
\begin{rmk}\label{rmk:presentation_of_current}
When presenting a polyhedral current $T$ as a locally finite sum $T = \sum_{i\in I} α_i\wedge [σ_i,µ_i]$, the datum of all $(α_i,σ_i,µ_i)_{i\in I}$ is unique up to locally finitely many operations of the following kinds: Subdividing the $σ_i$, adding/removing terms with $α=0$, replacing $(α,σ,µ)$ by $(λα,σ,λ^{-1}µ)$ for some $λ>0$, and exchanging $(α_1,σ,µ) + (α_2,σ,µ)$ and $(α_1+α_2,σ,µ)$.
\end{rmk}
\begin{defn}\label{def:polyhedral_derivative}
Let $T$ be a polyhedral current, say $T = \sum_{i\in I} α_i\wedge [σ_i,µ_i]$. Its \emph{polyhedral derivatives} are the polyhedral currents
$$d'_PT := \sum_{i\in I} (d'α_i)\wedge [σ_i,µ_i],\ \ \ d''_PT := \sum_{i\in I} (d''α_i)\wedge [σ_i, µ_i].$$
\end{defn}
It has been remarked before, cf. \cite{GK}*{Rmk. 2.4 (iii)}, that $d'_PT$ and $d'T$ resp. $d''_PT$ and $d''T$ need not coincide. The derivatives $d'T$ and $d''T$ may even be non-polyhedral, cf. Ex. \ref{ex:deriv_of_polyhed_not_polyhed} below.

A polyhedral complex $\mcT$ is \emph{subordinate} to $T$ if there is a presentation of the form $T = \sum_{σ\in \mcT} α_σ\wedge [σ,µ_σ]$. With such $\mcT$ fixed, the $α_σ$ and $µ_σ$ are uniquely determined up to the replacement of $(α_σ,µ_σ)$ by $(λα_σ,λ^{-1}µ_σ)$, where $λ>0$.

\subsection{Functoriality}
\label{ss:functoriality}
For an exact sequence of finite dimensional $\mbR$-vector spaces
$$0\to N_1\to N_2\to N_3\to 0,$$
there is a canonical-up-to-sign isomorphism $\det N_2 = \det N_1\tensor \det N_3$. So given weights $µ_i$ for $N_i$ for two out of $\{N_1,N_2,N_3\}$, they uniquely determine a weight for the third space through the relation
\begin{equation}
\label{eq:prod_of_weight}
µ_2 = µ_1\wedge µ_3 := µ_1\wedge \wt{µ_3}
\end{equation}
where $\wt{µ_3}\in \bigwedge^{\dim N_3}N_2$ is any lift of $µ_3$.

There is a space $PS(σ)$ of \emph{piecewise smooth forms} on a polyhedron $σ$. By definition, a piecewise smooth form is the datum of a polyhedral complex $\mcT$ with $σ = \cup_{ρ\in \mcT} ρ$ and smooth forms $α_ρ\in A(ρ)$, $ρ\in \mcT$, such that $α_ρ\vert_τ = α_τ$ for all $τ\subseteq ρ$; up to subdivision. We write $PS^{p,q}(σ)$ for those with all $α_ρ$ of bidegree $(p,q)$. If $µ$ is a weight for $σ$ and $α = (α_ρ)_{ρ\in \mcT}\in PS(σ)$ as before, we define the polyhedral current
\begin{equation}\label{eq:pws_as_current}
α\wedge [σ,µ] = \sum_{ρ\in \mcT,\ \dim ρ\ =\ \dim σ} α_ρ\wedge [ρ, µ].
\end{equation}
Here $µ$ defines a weight for $ρ$ because $N_ρ = N_σ$ for dimension reasons. For fixed $µ$, this defines an embedding $PS(σ) \subseteq D(\mbR^n)$.

Let $f\colon \mbR^n\to \mbR^m$ be an affine linear map and $σ\subseteq \mbR^n$ a polyhedron. Then $f(σ)$ is again a polyhedron. Let $µ$ be a weight on $σ$ and $ν$ a weight on $f(σ)$. Then $K := \ker\big((f-f(0))\vert_{N_σ}\colon  N_σ \to  N_{f(σ)}\big)$ acquires a canonical weight $δ$ through \eqref{eq:prod_of_weight} and there is a natural fiber integration map for forms with compact support, $f_{δ,*}\colon A_c^{p,q}(σ) \to  PS^{p-k,q-k}(f(σ))$, where $k = \dim K$.
It satisfies the projection formula
$$\int_{[σ,µ]} α\wedge f^*η = \int_{[f(σ), ν]} (f_{δ,*}α) \wedge η$$
which determines it uniquely. In other words, fiber integration provides a representative for the push-forward from \eqref{eq:push_for_currents},
\begin{equation}\label{eq:push_forward_fiber_integration}
f_*(α\wedge [σ,µ]) = (f_{δ,*} α)\wedge [f(σ), ν].
\end{equation}
In particular, the push-forward of a polyhedral current (with relatively compact support) is polyhedral again. Note that if $α\wedge [σ,µ]\in P_c^{p,q,r}$ and $\dim K = k$ as before, then $f_*(α\wedge [σ,µ]) \in P^{p-k,q-k,r + m - n + k}$. So $f_*$ is not trihomogeneous, but only bihomogeneous.

\begin{ex}
If $T = φ\wedge [σ,µ]\in P^{0,0,r}$ is a weighted polyhedron with smooth coefficient (cf. \cite{GK}*{§1}), then $f_*T \neq 0$ only if $f\vert_σ$ is injective. In this case $f_*(φ\wedge [σ,µ]) = (φ\circ f^{-1}) \wedge [f(σ), f(µ)]$, which is precisely the classical push-forward of weighted polyhedra in tropical geometry that underlies e.g. the Sturmfels--Tevelev multiplicity formula \cite{ST}.
\end{ex}

Given a \emph{surjective} affine linear map $f\colon \mbR^n\to \mbR^m$ and a current $T$ on $\mbR^m$, we may now also define a \emph{pull-back current} $f^*T \in D(\mbR^n)$. Namely the fiber integral $f_*η$ of a smooth form $η\in A_c^{p,q}(\mbR^n)$ (with respect to the standard weights on $\mbR^n$ and $\mbR^m$) is again smooth and we put
$$(f^*T)(η) := T(f_*η),\ \ \ η\in A_c(\mbR^n).$$
If $T = α\wedge [σ,µ]$, then one easily finds $f^*T = f^*α\wedge [f^{-1}σ, ν]$, where $ν = δ\wedge μ$ for the natural weight $δ$ on $\ker (f-f(0))$. Since $f_*(dη) = d(f_*η)$ for $η\in A_c(\mbR^n)$ and $d\in \{d',d''\}$, it follows by duality that $f^*(dT) = d(f^*T)$ for any current $T$.

\begin{ex}
Assume $f\colon \mbR^n\to \mbR^n$ is bijective and affine linear. Let $μ$ be the standard weight on $\mbR^n$. Then $f(μ) = |\det f|\, μ$. The fiber weight $δ$ on $\ker(f - f(0)) = \{0\}$ is thus $|\det f|$ and fiber integration is given by
\begin{equation}
f_{δ,*}(α) = |\det f| \, f^{-1,*}(α).
\end{equation}
The transformation rule \eqref{eq:integral_transformation_rule} implies that this satisfies the projection formula:
$$\int_{[\mbR^n, μ]} α\wedge f^*η = \int_{[\mbR^n, μ]} f^*(f^{-1,*}(α) \wedge η) \overset{\eqref{eq:integral_transformation_rule}}{=} |\det f| \int_{[\mbR^n, μ]} f^{-1,*}(α) \wedge η = \int_{[\mbR^n,μ]} f_{δ,*}(α)\wedge η.$$
Interchanging the roles of $α$ and $η$, the equality of leftmost and rightmost term shows
\begin{equation}
f^*(α\wedge [\mbR^n,μ]) = f^*(α)\wedge [\mbR^n,μ].
\end{equation}
\end{ex}

\subsection{Stokes' Theorem}
\label{ss:stokes}
By its very definition, a $(p,q)$-form $α$ on $\mbR^n$ may be viewed as an alternating form in $p+q$ variables on $\mbR^n\times \mbR^n$ with values in $C^\infty$. Given $w\in \mbR^n\times \mbR^n$, the \emph{contraction} $(α,w)$ of $α$ with $w$ (interior derivative) is defined as the multilinear form resulting from inserting and fixing $w$ as the first entry of $α$. This operation is characterized by the Leibniz rule
\begin{equation}
(α\wedge β, w) = (α,w)\wedge β + (-1)^{\deg α} α\wedge (β,w)
\end{equation}
and the identities
\begin{equation}
(d'φ,w) = \frac{\partial φ}{\partial w_1},\ \ \ (d''φ, w) = \frac{\partial φ}{\partial w_2}, \ \ \ φ\in C^\infty,\ w = (w_1,w_2).
\end{equation}
Recall that a \emph{facet} of a polyhedron is a face of codimension $1$.
\begin{defn}\label{def:normal_vector}
Let $[σ,µ]$ be a weighted polyhedron and $τ\subset σ$ a facet that is endowed with a weight $ν$. Then there is a unique vector $\ob{n_{σ,τ}}\in N_σ/N_τ$ that points in direction of $σ$ and is such that $µ = ν \wedge \ob{n_{σ,τ}}$ in the sense of \eqref{eq:prod_of_weight}. A \emph{normal vector} for $τ\subset σ$ is any choice of lift $n_{σ,τ}\in N_σ$.
\end{defn}
Assume $m = \dim σ$. The (first) \emph{boundary integral} of $α\in A_c^{m-1,m}(σ)$ over $τ$ is defined as
\begin{equation}\label{eq:def_boundary_integral_facet}
\int_{\partial'_τ [σ,µ]} α := - \int_{[τ,ν]} (α, n''_{σ,τ})\vert_{τ}.
\end{equation}
The convention in notation here is $v' = (v,0)$ and $v'' = (0,v)$ for any vector $v\in \mbR^n$. The restriction $(α,n''_{σ,τ})\vert_τ$ is independent of the choice of normal vector and the whole expression is independent of the choice of $ν$. Define the (first) \emph{boundary integral of $σ$} as
\begin{equation}\label{eq:def_boundary_integral_smooth}
\int_{\partial' [σ,µ]} α := \sum_{τ\subset σ\ \text{a facet}}\int_{\partial'_τ [σ,µ]} α.
\end{equation}
The definition of the (second) boundary integral differs by a sign which is motivated by Ex. \ref{ex:sign_Stokes} below. For $β\in A_c^{m,m-1}(σ)$, put
\begin{equation}\label{eq:def_boundary_integral_facet_second}
\int_{\partial''_τ [σ,µ]} β := \int_{[τ,ν]} (α, n'_{σ,τ})\vert_{τ},\ \ \ \int_{\partial'' [σ, μ]} β = \sum_{τ\subset σ\ \text{a facet}} \int_{\partial''_τ [σ, μ]} β.
\end{equation}
\begin{prop}[\cite{CLD}*{Lemme 1.5.7}, Stokes' Theorem]\label{prop:stokes_polyhedron}
Let $[σ,µ]$ be an $m$-dimensional weighted polyhedron and let $α\in A^{m-1,m}_c(σ)$ and $β\in A^{m,m-1}_c(σ)$. Then
$$\int_{[σ,µ]}d'α = \int_{\partial' [σ,µ]} α,\ \ \ \int_{[σ,µ]} d''β =  \int_{\partial'' [σ,µ]} β.$$
\end{prop}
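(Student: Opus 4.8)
The plan is to reduce both identities to the classical Stokes theorem on $σ$, letting the weight formalism absorb the surface measures on the facets; I treat the $d'$-identity in detail, as the $d''$-identity will follow by the symmetry exchanging the two factors of $\Omega^1$ in \eqref{eq:def_pq_form}. First I would reduce to the full-dimensional case $n=m$. Both sides only involve the restrictions of $α$ to $σ$ and to its facets, all of which lie in $L_σ$, and the integrals $\int_{[σ,µ]}$ and $\int_{[τ,ν]}$ are computed inside $L_σ$ and $L_τ$ by definition. Since these integrals and the boundary integrals are independent of the signs of $µ$, $ν$ and of the choice of normal vector (as recorded right after Definition \ref{def:normal_vector}), I may fix affine coordinates $x_1,\ldots,x_m$ on $L_σ$ with $µ^\vee = x_1\wedge\ldots\wedge x_m$ and orient $σ$ by them. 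In these coordinates every $α\in A^{m-1,m}_c(σ)$ has the shape $α = \sum_{i=1}^m φ_i\, d'x_{\{1,\ldots,m\}\setminus\{i\}}\wedge d''x_{\{1,\ldots,m\}}$, because the $d''$-part must exhaust all $m$ variables while the $d'$-part omits exactly one of them.

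Next I would compute the left-hand side. In $d'α$ every term $\partial φ_i/\partial x_j$ with $j\neq i$ dies, since $d'x_j$ already occurs in $d'x_{\{1,\ldots,m\}\setminus\{i\}}$; collecting the survivors gives $d'α = \big(\sum_i (-1)^{i-1}\partial φ_i/\partial x_i\big)\, d'x_{\{1,\ldots,m\}}\wedge d''x_{\{1,\ldots,m\}}$. Reordering the factor $d'x_{\{1,\ldots,m\}}\wedge d''x_{\{1,\ldots,m\}}$ into the standard pattern $d'x_1\wedge d''x_1\wedge\ldots\wedge d'x_m\wedge d''x_m$ costs a universal sign $\varepsilon_m=\pm 1$, after which the definition of $\int_{[σ,µ]}$ identifies the left-hand side with the ordinary integral $\varepsilon_m\int_σ\sum_i(-1)^{i-1}(\partial φ_i/\partial x_i)\,dx$. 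This is exactly $\varepsilon_m\int_σ d\tilde α$ for the ordinary $(m-1)$-form $\tilde α := \sum_i φ_i\, dx_{\{1,\ldots,m\}\setminus\{i\}}$, with $σ$ carrying the orientation fixed above. As $α$ has compact support, the classical Stokes theorem for the polyhedron $σ$ yields $\int_σ d\tilde α = \sum_{τ\subset σ\ \text{facet}}\int_τ \tilde α|_τ$, the facets taken with their induced boundary orientation.

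It then remains to match, facet by facet, the term $\varepsilon_m\int_τ\tilde α|_τ$ with $-\int_{[τ,ν]}(α,n''_{σ,τ})|_τ$ from \eqref{eq:def_boundary_integral_facet}. Here I would localize at a single facet, choosing coordinates so that $N_τ=\{x_m=0\}$ and $σ$ lies on one side of $τ$. By Definition \ref{def:normal_vector} the vector $n_{σ,τ}$ points into $σ$, i.e. opposite to the classical outward normal, while the relation $µ=ν\wedge\ob{n_{σ,τ}}$ fixes the length of $n_{σ,τ}$ so that the facet measure $ν$ together with the $n_{σ,τ}$-transverse direction recovers the bulk measure $µ$ — exactly the compatibility between bulk volume, surface measure and normal used in the divergence theorem (and the result does not depend on which $ν$ is chosen). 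Expanding the contraction by its Leibniz rule together with $(d''x_j,n''_{σ,τ})=(n_{σ,τ})_j$ then identifies $(α,n''_{σ,τ})|_τ$, integrated against $ν$, with the classical flux across $τ$ of the vector field determined by the $φ_i$; the inward direction of $n_{σ,τ}$ explains the minus sign in \eqref{eq:def_boundary_integral_facet}, and the facet reordering sign $\varepsilon_{m-1}$ combines with $\varepsilon_m$ so that everything cancels. The $d''$-identity comes from running the same computation after swapping the two $\Omega^1$-factors; this involution flips the sign of the top-dimensional integral, which is precisely what the opposite sign convention in \eqref{eq:def_boundary_integral_facet_second} compensates.

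The main obstacle is this last comparison: one must verify that the purely combinatorial weighted normal — the vector $n_{σ,τ}$ normalized by $µ=ν\wedge\ob{n_{σ,τ}}$ together with the contraction $(-,n''_{σ,τ})$ — reproduces the classical ``outward unit normal times induced surface measure'' of the divergence theorem, with every orientation and reordering sign tracked consistently between the $m$-dimensional bulk and the $(m-1)$-dimensional facets.
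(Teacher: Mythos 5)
Your proposal cannot be compared against an internal proof, because the paper gives none: Prop.~\ref{prop:stokes_polyhedron} is imported by citation from \cite{CLD}*{Lemme 1.5.7}, with only Ex.~\ref{ex:sign_Stokes} indicating why the signs are what they are. Judged on its own, your reduction to the classical Stokes theorem is correct, and the sign bookkeeping you flag as the main obstacle does close up. Concretely: in coordinates adapted to a facet $τ$ (so that $µ^\vee = x_1\wedge\ldots\wedge x_m$, $ν^\vee = x_1\wedge\ldots\wedge x_{m-1}\vert_{N_τ}$, $σ\subseteq\{x_m\geq c\}$, $n_{σ,τ}=e_m$), one finds $(α,n''_{σ,τ})\vert_τ = φ_m\, d'x_{\{1,\ldots,m-1\}}\wedge d''x_{\{1,\ldots,m-1\}}$, while the classical facet contribution carries the orientation sign $(-1)^m$ from $\det(-e_m,e_1,\ldots,e_{m-1})$; with $\varepsilon_m = (-1)^{m(m-1)/2}$ the required identity $\varepsilon_m(-1)^m = -\varepsilon_{m-1}$ holds because $\varepsilon_m/\varepsilon_{m-1}=(-1)^{m-1}$, so the inward-pointing normal together with the minus sign in \eqref{eq:def_boundary_integral_facet} exactly reproduces the outward flux, and the relation $µ = ν\wedge\ob{n_{σ,τ}}$ is precisely the bulk-measure/surface-measure/normal compatibility you invoke. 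One small imprecision: the involution $d'\leftrightarrow d''$ does not simply ``flip the sign of the top-dimensional integral''; it multiplies $m$-dimensional integrals by $(-1)^m$ and $(m-1)$-dimensional ones by $(-1)^{m-1}$, and it is this \emph{relative} flip that the opposite sign in \eqref{eq:def_boundary_integral_facet_second} compensates --- which is exactly the content of Ex.~\ref{ex:sign_Stokes}. With that adjustment, your argument is a sound, self-contained substitute for the external reference.
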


\begin{ex}\label{ex:sign_Stokes}
Prop. \ref{prop:stokes_polyhedron} is essentially just the following statement. For every smooth function $ρ\colon [0,1] \to \mbR$,
$$\int_0^1 ρ'(x)d'x\wedge d''x = ρ(1) - ρ(0) = - \int_0^1 ρ'(x) d''x\wedge d'x.$$
The differing signs explain the sign change from \eqref{eq:def_boundary_integral_facet} to \eqref{eq:def_boundary_integral_facet_second}.
\end{ex}

\begin{ex}\label{ex:deriv_of_polyhed_not_polyhed}
Prop. \ref{prop:stokes_polyhedron} says $d'[σ,µ] = -\partial'[σ,µ]$ and $d''[σ,µ] = -\partial''[σ,µ]$ as currents, but these derivatives are never polyhedral if $\dim σ > 0$. (The difference in sign with Stokes' Theorem comes from \eqref{eq:signs_for_currents}.) Namely they have support on the union of facets $\partial σ$ of $σ$. If $\dim σ = m$, then $\partial σ$ is an $(m-1)$-dimensional polyhedral set, so $η\vert_{\partial σ} = 0$ for every $η\in A^{m-1,m}(σ)$ resp. $η\in A^{m,m-1}(σ)$, but not necessarily
$$\int_{\partial' [σ,µ]}η = 0\ \ \ \text{resp.}\ \ \ \int_{\partial'' [σ,µ]}η = 0.$$
\end{ex}

\section{$δ$-Forms}
\label{s:delta_forms}
We consider forms, currents and polyhedra on $\mbR^n$ in the following.
\begin{defn}\label{def:delta-form}
A \emph{$δ$-form} is a polyhedral current $T$ such that both $d'T$ and $d''T$ are again polyhedral.
\end{defn}
This definition turns out to be equivalent to the familiar concept of balancing for $T$.
\begin{defn}\label{def:balance}
Let $\mcT$ be a polyhedral complex, $µ_σ$, $σ\in \mcT$, a family of weights for its polyhedra and $α_σ\in A(σ)$, $σ\in \mcT$ a family of smooth forms. This datum is called \emph{balanced}, if the following two equivalent conditions are met.

(1) For all polyhedra $τ\in \mcT$, the sum
\begin{equation}\label{eq:def_balancing}
\sum_{σ\in \mcT,\ τ\subset σ\ \text{a facet}} α_σ\vert_τ \tensor n_{σ,τ} \in A(τ)\tensor_{\mbR} \mbR^n.
\end{equation}
lies in the subspace $A(τ)\tensor_{\mbR}N_τ$. The normal vectors $n_{σ,τ}$ here are taken for the weights $µ_σ$ and $µ_τ$.

(2) For every polyhedron $τ\in \mcT$, every affine linear function $z$ with \emph{constant} restriction $z\vert_τ$ and normal vectors $n_{σ,τ}$ as before,
\begin{equation}\label{eq:def_balancing_lin_functions}
\sum_{σ\in \mcT,\ τ\subset σ\ \text{a facet}} \frac{\partial z}{\partial n_{σ,τ}} α_σ \vert_τ = 0.
\end{equation}
Since $z\vert_τ$ is constant, this expression does not depend on the choices of the $n_{σ,τ}$.
\end{defn}

\begin{proof}[Proof of the equivalence of (1) and (2).] Assume $z\vert_τ$ to be constant and consider the pairing
$$A(\mbR^n)\tensor_{\mbR}\mbR^n \longrightarrow  A(τ),\ α\tensor v \longmapsto (d'z\wedge α, v')\vert_τ.$$
It follows from the Leibniz rule that 
$$(d'z\wedge α, v')\vert_τ = (\partial z/\partial v)\cdot α\vert_τ,$$
so the pairing factors through $A(τ)\tensor_{\mbR} \mbR^n$ and is simply the $A(τ)$-linear extension of $v\mapsto \partial z/\partial v$. The proof is now the statement that a vector $v$ lies in $N_τ$ if and only if $\partial z/\partial v = 0$ for every affine linear function $z$ with constant restriction $z\vert_τ$.
\end{proof}

Formulation (1) is closer to the usual condition of balancing in tropical geometry but makes implicit use of the existence of the ambient space $\mbR^n$. Formulation (2) in turn is more suitable for generalizations to abstract polyhedral complexes, cf. \cite{Mih_delta_nonarch}.

Being balanced is stable under the four operations in Rmk. \ref{rmk:presentation_of_current}, so only depends on the current $T = \sum_{σ\in \mcT} α_σ\wedge [σ,µ_σ]$. Also note that \eqref{eq:def_balancing} and \eqref{eq:def_balancing_lin_functions} are trihomogeneous in $α$ and that only polyhedra of a fixed dimension occur. One obtains that $T = \sum_{p,q,r} T^{p,q,r}$, with $T^{p,q,r}$ of tridegree $(p,q,r)$, is balanced if and only if each $T^{p,q,r}$ is. 

\begin{thm}\label{thm:delta_forms}
A polyhedral current $T$ is a $δ$-form if and only if it is balanced. In particular, it is a $δ$-form if and only if $T^{p,q,r}$ is a $δ$-form for all $(p,q,r)$.

Furthermore, $T$ is already a $δ$-form if one out of $d'T$, $d''T$ is again polyhedral.
\end{thm}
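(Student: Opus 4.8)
We want to prove that a polyhedral current $T$ is a $δ$-form if and only if it is balanced, and that it suffices to check that one of $d'T$, $d''T$ is polyhedral. Let me think carefully.

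The key is Stokes' theorem (Prop. \ref{prop:stokes_polyhedron}), which tells us (as in Ex. \ref{ex:deriv_of_polyhed_not_polyhed}) that $d'[σ,µ] = -\partial'[σ,µ]$ as currents. More precisely, for $T = \sum_σ α_σ \wedge [σ, µ_σ]$ subordinate to a polyhedral complex $\mcT$, I want to compute $d'T$ using the Leibniz rule $d'(α \wedge T) = d'α \wedge T + (-1)^{\deg α} α \wedge d'T$ from \eqref{eq:Leibniz_current}. This gives

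$$d'T = \sum_{σ∈\mcT} (d'α_σ) \wedge [σ,µ_σ] + \sum_{σ∈\mcT} (-1)^{\deg α_σ} α_σ \wedge d'[σ,µ_σ].$$

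The first sum is exactly the polyhedral derivative $d'_P T$, which is always polyhedral. So $d'T$ is polyhedral iff the remaining "boundary" term $\partial' T := -\sum_σ (-1)^{\deg α_σ} α_σ \wedge d'[σ,µ_σ]$ is polyhedral. I would therefore compute this boundary term explicitly.

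**The boundary term and balancing.**

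Let me think about what $α_σ \wedge d'[σ,µ_σ]$ is as a current. By Stokes (in the form of Ex. \ref{ex:deriv_of_polyhed_not_polyhed}), $d'[σ,µ_σ]$ is supported on the facets $τ ⊂ σ$, and its action involves the contraction $(−, n''_{σ,τ})\vert_τ$ against the normal vectors. Carrying this through, I expect to find that for each codimension-one face $τ$, the contribution to $\partial' T$ from all facets $σ ⊃ τ$ is, as a current on $τ$, controlled by the expression

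$$\sum_{σ: τ ⊂ σ \text{ facet}} α_σ\vert_τ \tensor n_{σ,τ}.$$

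The point is that this is a current of integration over $τ$ (hence polyhedral, i.e. a term of the right shape) precisely when the tangential and normal directions separate correctly. An arbitrary element of $A(τ) \tensor_{\mbR} \mbR^n$ contracted and integrated over $τ$ produces a current that is polyhedral along $τ$ if and only if the $\mbR^n$-component lies in $N_τ$ — because contraction against a normal (transverse) direction $v'' \notin N''_τ$ produces a genuine normal-direction derivative, whose integration-against-test-forms is not of integration type over $τ$. This is exactly the content of the pairing computation in the proof of equivalence of (1) and (2) above. So $\partial' T$ is polyhedral iff \eqref{eq:def_balancing} lies in $A(τ) \tensor_{\mbR} N_τ$ for every $τ$, which is the balancing condition (1).

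**The two directions and the final clause.**

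This computation simultaneously gives both implications: $T$ balanced $\Rightarrow$ the boundary sum lands in $A(τ) \tensor N_τ$ $\Rightarrow$ $\partial' T$ is polyhedral $\Rightarrow$ $d'T$ is polyhedral, and conversely $d'T$ polyhedral forces $\partial' T$ polyhedral, which forces balancing (using that the currents for distinct $τ$ have distinct supports, so no cancellation between faces can occur — one reads off the condition $τ$ by $τ$). The decisive observation is that the balancing condition \eqref{eq:def_balancing} is \emph{symmetric} in $d'$ versus $d''$: the normal vectors $n_{σ,τ}$ are the same, and only the $'$ versus $''$ decoration of the contracted vector changes. Since condition (1) does not reference $d'$ or $d''$ at all, the very same balancing condition characterizes both "$d'T$ polyhedral" and "$d''T$ polyhedral." This immediately yields the last sentence: each of the two conditions is individually equivalent to balancing, hence to each other, so checking one suffices. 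The trihomogeneity remark preceding the theorem then gives the statement that $T$ is a $δ$-form iff each $T^{p,q,r}$ is.

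**Main obstacle.**

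I expect the main technical obstacle to be the explicit identification of the boundary current $α_σ \wedge d'[σ,µ_σ]$ with an integration current over the facets $τ$, together with the bookkeeping of signs and normal-vector conventions so that the coefficient of the $τ$-term is precisely $\sum_{σ} α_σ\vert_τ \tensor n_{σ,τ}$ as in \eqref{eq:def_balancing}. The subtlety is that $d'[σ,µ_σ]$ is not itself polyhedral, so one must wedge with $α_σ$ first and show that the \emph{tangential} part of $α_σ\vert_τ \tensor n_{σ,τ}$ (the part in $A(τ) \tensor N_τ$) produces an honest polyhedral integration current over $τ$, while the \emph{transverse} part produces a current that is visibly non-polyhedral (it differentiates test forms in the normal direction). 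Pinning down this dichotomy rigorously — essentially re-deriving the pairing lemma from the proof of (1)$\Leftrightarrow$(2) at the level of currents rather than forms — is the crux; everything else is Leibniz and linear algebra.
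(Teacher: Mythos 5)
Your treatment of the trihomogeneous case follows the paper's own route: expand $d'T = d'_PT + \sum_\sigma (-1)^{\deg \alpha_\sigma}\alpha_\sigma \wedge d'[\sigma,\mu_\sigma]$ via the Leibniz rule, identify the boundary term through Stokes as a sum of facet contractions, and show that its polyhedrality is equivalent to \eqref{eq:def_balancing}. The ``crux'' you flag --- making rigorous that the transverse part is genuinely non-polyhedral --- is resolved in the paper not by splitting into tangential and transverse parts, but by an explicit test form: for a face $\tau$ and an affine $z$ violating \eqref{eq:def_balancing_lin_functions}, the form $\eta = (-1)^{\deg\alpha}d''z\wedge\ob{\eta}$ (with $\ob{\eta}$ a bump meeting only $\tau$ among codimension-$(r+1)$ faces) vanishes on the entire codimension-$(r+1)$ skeleton yet satisfies $(d'T - d'_PT)(\eta)\neq 0$; since a polyhedral current supported on a polyhedral set kills test forms vanishing on that set, $d'T$ cannot be polyhedral. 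Your $d'$/$d''$ symmetry observation for the final clause is likewise the paper's.

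The genuine gap is the reduction of a general polyhedral current to its trihomogeneous pieces. You write that ``the trihomogeneity remark preceding the theorem then gives the statement that $T$ is a $\delta$-form iff each $T^{p,q,r}$ is,'' but that remark only says $T$ is \emph{balanced} iff each $T^{p,q,r}$ is balanced; invoking it here is circular, since passing from ``$T$ is a $\delta$-form'' to ``each $T^{p,q,r}$ is a $\delta$-form'' is precisely what must be proved. The difficulty is that $d'$ is bihomogeneous but not trihomogeneous: components $T^{p,q,r}$ with the same current bidegree $(p+r,q+r)$ but different $r$ have derivatives in the same bidegree, so a priori their non-polyhedral parts could cancel, leaving $d'T$ polyhedral while no individual $d'T^{p,q,r}$ is. Your ``no cancellation between faces'' argument does not cover this: it requires all contributing faces to have the same codimension, so that distinct faces are never nested; for inhomogeneous $T$ the boundary contributions sit on faces of different codimensions, one contained in another. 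The paper's step (4) closes exactly this hole with a minimality-plus-support argument: choose $r_0$ minimal with $d'T^{r_0}$ not polyhedral; the obstruction $\beta$ of \eqref{eq:beta} lives on a codimension-$(r_0+1)$ face and hence contains points outside $\Supp\bigl(\sum_{r>r_0} d'T^r - d'_PT^r\bigr)$, which lies in strictly higher-codimensional skeleta; near such a point every $d'T^r$ with $r\neq r_0$ is polyhedral, so $d'T$ cannot be. Without this step your argument establishes the theorem only for trihomogeneous $T$, and both the ``in particular'' clause and the ``one of $d'T$, $d''T$ suffices'' clause remain unproven for general polyhedral currents.
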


\begin{proof}
(1) We first assume that $T$ is of tridegree $(p,q,r)$. Let $\mcT$ be a weighted polyhedral complex subordinate to $T$, say
$$T = \sum_{σ\in \mcT^r}α_σ\wedge [σ,µ_σ],\ \ \ α_σ\in A^{p,q}(σ),$$
and let $η\in A^{n-p-r,n-q-r}_c$ be a test form. One obtains from the Leibniz rule and Stokes' Theorem that
\begin{align}
(d'T - d'_PT)(η) & = \sum_{τ\in \mcT^{r+1}}\int_{[τ,µ_τ]}\sum_{σ\in \mcT^r,\ τ\subset σ} (α_σ\wedge η, n''_{σ,τ})\vert_τ \label{eq:boundary_general}\\
& = \sum_{τ} \left[\sum_{τ\subset σ} \big((α_σ,n''_{σ,τ}) \wedge [τ,µ_τ]\big)(η) + (-1)^{\deg α} \int_{[τ,µ_τ]} \sum_{τ\subset σ} α_σ \wedge (η,n''_{σ,τ})\vert_τ\right]. \nonumber
\end{align}
The individual contractions $(α_σ,n''_{σ,τ})$ and $(η,n''_{σ,τ})$ depend on the choices of normal vectors, but the total expression does not. We henceforth fix the choices $n_{σ,τ}$. The terms $\big((α_σ,n''_{σ,τ}) \wedge [τ,µ_τ]\big)(η)$ always define polyhedral currents. So the statement to prove is that $T$ is balanced if and only if the following is a polyhedral current,
$$η\longmapsto (-1)^{\deg α}\sum_{τ} \int_{[τ,µ_τ]} \sum_{τ\subset σ} α_σ \wedge (η,n''_{σ,τ})\vert_τ.$$

(2) Assume first that $T$ is balanced, fix some $τ$ and write
\begin{equation}\label{eq:aux_1}
\sum_{τ\subset σ} α_σ\vert_τ \tensor n_{σ,τ} = \sum_{i \in I} β_i \tensor v_i,\ β_i\in A^{p,q}(τ),\ v_i\in N_τ,
\end{equation}
according to \eqref{eq:def_balancing}. Then
\begin{equation}\label{eq:aux_2}
\sum_{τ\subset σ} α_σ \wedge (η,n''_{σ,τ})\vert_τ = \sum_{i \in I} β_i\wedge (η,v''_i)\vert_τ.
\end{equation}
By the Leibniz rule,
\begin{equation}\label{eq:aux_3}
β_i\wedge (η, v''_i) = (-1)^{\deg β}(β_i\wedge η, v''_i) + (-1)^{\deg β +1} (β_i,v''_i)\wedge η.
\end{equation}
Since $β_i\wedge η$ is of bidegree $(\dim τ, \dim τ + 1)$, the first summand vanishes. The (integral over $[τ,μ_τ]$ of the) second summand defines a polyhedral current in $η$. Taking the sum over $i$ and $τ$ shows that $d'T$ is a polyhedral current. The same argument works for $d''T$, proving that a trihomogeneous balanced polyhedral current is a $δ$-form. 

(3) Conversely assume that $T$ is \emph{not} balanced, our claim being that $d'T$ is \emph{not} polyhedral. (We still assume that $T$ has tridegree $(p,q,r)$ currently.) Generally, if $S$ is a polyhedral current, $C$ some polyhedral sets with $\Supp S \subseteq C$ and $η\in A_c$ a test form, then $η\vert_C = 0$ implies $S(η) = 0$. In the situation at hand, we have already seen that $\Supp (d'T-d'_PT)$ is contained in the codimension $r+1$ skeleton $\bigcup_{τ\in \mcT^{r+1}} τ$ and our approach is to construct a test form $η$ with $η\vert_τ = 0$ for all $τ$ but $(d'T-d'_PT)(η)\neq 0$.
Pick $τ$ and $z$ such that \eqref{eq:def_balancing_lin_functions} is not satisfied, i.e. $z$ is an affine linear function with constant restriction $z\vert_τ$ and such that
\begin{equation}\label{eq:beta}
β := \sum_{τ\subset σ} \frac{\partial z}{\partial n_{σ,τ}} α_σ\vert_τ\neq 0.
\end{equation}
There exists a bump test form $\ob{η} \in A_c^{\dim τ - p, \dim τ - q}$ with the properties that $\Supp \ob{η}\cap τ' \neq \emptyset$, $τ'\in \mcT^{r+1}$, only for $τ' = τ$ and
$$\int_{[τ,µ_τ]} β\wedge \ob{η} \neq 0.$$
Then the $τ$-contribution to \eqref{eq:boundary_general} for the test form $η = (-1)^{\deg α} d''z \wedge \ob{η}$ is simply
$$\sum_{τ\subset σ} (d''z\wedge α_σ\wedge \ob{η}, n''_{σ,τ})\vert_τ = β \wedge \ob{η}.$$
Here we combined the Leibniz rule for $(\ \ \ ,n''_{σ,τ})$ with the properties $d''z\vert_τ = 0$ and $(d''z, n''_{σ,τ}) = \partial z/\partial n_{σ,τ}$. Thus $(d'T - d'_PT)(d''z\wedge \ob{η}) \neq 0$ even though $d''z\wedge \ob{η}\vert_{τ'} = 0$ for every $τ'\in \mcT^{r+1}$. So $d'T$ cannot be polyhedral and hence $T$ is not a $δ$-form. Note that arguments (2) and (3) show the stronger statement that a trihomogeneous $T$ is balanced if and only if \emph{one out of} $d'T$ and $d''T$ is polyhedral, i.e. they prove the last statement of Thm. \ref{thm:delta_forms} for trihomogeneous $T$.

(4) Now consider a general $δ$-form $T = \sum_{p,q,r} T^{p,q,r}$ with $T^{p,q,r}$ of the indicated tridegree. Our claim is that each $T^{p,q,r}$ is a $δ$-form.
Since $T^{p_1,q_1,r}$ and $T^{p_2,q_2,r}$ lie in different bidegrees as currents for $(p_1,q_1)\neq (p_2,q_2)$ and since $d'$ and $d''$ are bihomogeneous, it is enough to prove that all $T^r := \sum_{p,q}T^{p,q,r}$ are $δ$-forms.
Polyhedral currents may be added ad libitum, so it is sufficient to show that all $(d'T^r - d'_PT^r)$ and $(d''T^r - d''_PT^r)$ are polyhedral. Assume for the sake of contradiction that there is some $r_0$ with $d'T^{r_0}$ not polyhedral and assume that $r_0$ is chosen minimal. Then the previous arguments imply that there is some point
$$x\in \Supp (d'T^{r_0} - d'_PT^{r_0}) \setminus \Supp \left(\sum_{r > r_0} d'T^r - d'_PT^r\right)$$
that has no open neighborhood $x\in U$ such that $d'T^{r_0}\vert_U$ is polyhedral. (Take $x\in \Supp β$ where $β$ is as in \eqref{eq:beta}.) Using minimality of $r_0$, we conclude that $d'T$ cannot be polyhedral. The same argument applies with $d''$ instead of $d'$. Thus we obtain that $T$ is a $δ$-form, if and only if each $T^{p,q,r}$ is a $δ$-form, if and only if each $T^{p,q,r}$ is balanced, if and only if $T$ is balanced.
\end{proof}

\begin{defn}
(1) We denote by $B^{p,q,r}=B^{p,q,r}(\mbR^n)$ the space of $δ$-forms of the indicated tridegree and by $B = \bigoplus_{p,q,r} B^{p,q,r}$ the space of all $δ$-forms. Write $B^{p,q} = \bigoplus_{r} B^{p-r,q-r,r}$ for the space of $δ$-forms of bidegree $(p,q)$ in the sense of currents.

(2) Since $d'd'T = 0$, Thm. \ref{thm:delta_forms} implies that $d'T$ is a $δ$-form. Similarly for $d''$, so one obtains derivatives
$$d'\colon B^{p,q}\longrightarrow B^{p+1,q},\ \ \ d''\colon B^{p,q}\longrightarrow B^{p,q+1}.$$
The balancing condition \eqref{eq:def_balancing} is stable under $d'_P$ and $d''_P$, so the polyhedral derivatives restrict to operators
$$d'_P\colon B^{p,q,r} \longrightarrow  B^{p+1,q,r},\ \ \ d''_P\colon B^{p,q,r}\longrightarrow B^{p,q+1,r}.$$
Define the boundary operators $\partial' := d'_P - d'$ and $\partial'' := d''_P - d''$. It will be explained below, cf. \eqref{eq:boundary_formula_1}, that these are trihomogeneous in the sense
$$\partial'\colon  B^{p,q,r} \longrightarrow  B^{p,q-1,r+1},\ \ \ \partial''\colon B^{p,q,r}\longrightarrow B^{p-1,q,r+1}.$$
\end{defn}

\begin{lem}\label{lem:differential_calculus_delta_forms}
\begin{enumerate}[wide, labelindent=0pt, labelwidth=!, label=(\arabic*), topsep=2pt, itemsep=2pt]
\item The boundary derivatives satisfy
\begin{align*}
0 &= \partial'\partial' = \partial''\partial'',\\
0 &= \partial'\partial'' + \partial''\partial',\\
0 &= \partial'd'_P + d'_P \partial' = \partial''d''_P + d''_P \partial'',\\
0 &= \partial' d''_P + d'_P \partial'' + \partial'' d'_P + d''_P \partial'.
\end{align*}

\item Assume that $T\in B_c(\mbR^n)$ has compact support and that $f\colon \mbR^n\to \mbR^m$ is an affine linear map. Then $f_*T\in B_c(\mbR^m)$ is also a $δ$-form.

\item Let $f\colon \mbR^n\to \mbR^m$ be a surjective affine linear map and $S\in B(\mbR^m)$. Then $f^*S\in B(\mbR^n)$ is also a $δ$-form.
\end{enumerate}
\end{lem}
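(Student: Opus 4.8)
The plan is to deduce all three parts from the compatibilities already established in §\ref{ss:smooth_forms} and §\ref{ss:functoriality} together with the nilpotency relations for the current differentials, so that no new computation with individual polyhedra is required.

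For part (1), the key observation is that every listed relation is a formal consequence of the three identities
\[
d'd' = 0, \qquad d''d'' = 0, \qquad d'd'' + d''d' = 0
\]
for the \emph{current} differentials---which hold on all of $D$ by duality from the corresponding identities for smooth forms, cf. \eqref{eq:signs_for_currents}---combined with the fact that $d'_P$, $d''_P$, $\partial'$, $\partial''$ are trihomogeneous of tridegrees $(1,0,0)$, $(0,1,0)$, $(0,-1,1)$, $(-1,0,1)$ on the tridegree-graded space $B = \bigoplus_{p,q,r} B^{p,q,r}$. Concretely, I would substitute $d' = d'_P - \partial'$ and $d'' = d''_P - \partial''$ into these three identities and expand. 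For instance $d'd' = 0$ becomes
\[
d'_P d'_P - (d'_P \partial' + \partial' d'_P) + \partial' \partial' = 0,
\]
where the three grouped terms are trihomogeneous of the pairwise distinct tridegrees $(2,0,0)$, $(1,-1,1)$, $(0,-2,2)$. Since $B$ is a direct sum over tridegrees, each component must vanish separately, giving $\partial'\partial' = 0$ and $\partial' d'_P + d'_P \partial' = 0$ (and incidentally $d'_P d'_P = 0$). The identity $d''d'' = 0$ yields the double-primed analogues, while expanding $d'd'' + d''d' = 0$ and separating by tridegree produces $\partial'\partial'' + \partial''\partial' = 0$ (the tridegree-$(-1,-1,2)$ part) and the four-term relation $\partial' d''_P + d'_P \partial'' + \partial'' d'_P + d''_P \partial' = 0$ (the tridegree-$(0,0,1)$ part, into which all four mixed terms collapse). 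Thus part (1) is pure degree bookkeeping once the tridegrees of the four operators are recorded.

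For part (2), I would use that $f_*$ sends polyhedral currents with relatively compact support to polyhedral currents, by \eqref{eq:push_forward_fiber_integration}, and that $f_* d = d f_*$ for $d \in \{d', d''\}$, cf. §\ref{ss:smooth_forms}. If $T \in B_c$, then $f_* T$ is polyhedral and has compact support contained in $f(\Supp T)$; moreover $d'T$ and $d''T$ are polyhedral of compact support because $T$ is a $\delta$-form, so $d'(f_*T) = f_*(d'T)$ and $d''(f_*T) = f_*(d''T)$ are polyhedral as well. By Definition \ref{def:delta-form}, $f_* T$ is a $\delta$-form. Part (3) is entirely parallel: for surjective $f$ the formula $f^*(\alpha \wedge [\sigma,\mu]) = f^*\alpha \wedge [f^{-1}\sigma, \nu]$ of §\ref{ss:functoriality} shows $f^*$ preserves polyhedrality---local finiteness of the $f^{-1}\sigma$ following from that of the $\sigma$ since $f(K)$ is compact for $K$ compact---and $f^* d = d f^*$ was established there by duality. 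Hence for $S \in B$ the currents $f^* S$, $f^*(d'S) = d'(f^*S)$ and $f^*(d''S) = d''(f^*S)$ are all polyhedral, so $f^* S$ is a $\delta$-form.

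The only point requiring genuine care is the degree separation in part (1): one must invoke that an operator identity on the direct sum $B = \bigoplus_{p,q,r} B^{p,q,r}$ decomposes into independent trihomogeneous components, each forced to vanish. This is immediate once the tridegrees are in hand, so I expect no real obstacle; parts (2) and (3) are then formal consequences of the functoriality set up in §\ref{ss:smooth_forms} and §\ref{ss:functoriality}.
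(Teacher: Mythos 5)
Your proposal is correct and takes essentially the same approach as the paper: part (1) is obtained there by exactly your argument---substituting $d' = d'_P - \partial'$, $d'' = d''_P - \partial''$ into $(d')^2 = (d'')^2 = 0$ and $d'd'' = -d''d'$ and separating the resulting operator identities into trihomogeneous components of pairwise distinct tridegrees---while parts (2) and (3) are deduced, as you do, from the facts that $f_*$ and $f^*$ commute with $d'$, $d''$ and preserve polyhedrality. The paper compresses all of this into two sentences; your write-up just makes the same bookkeeping explicit.
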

\begin{proof}
(1) The necessary observation is that $d'_P$, $\partial'$, $d''_P$ and $\partial''$ are all trihomogeneous of different tridegrees. The stated relations then follow from the identities $(d')^2 = (d'')^2 = 0$ and $d'd'' = -d''d'$.

(2) and (3) follow from the fact that $f_*$ and $f^*$ commute with $d'$ and $d''$ and preserve the property of being polyhedral.
\end{proof}

\begin{ex}\label{ex:delta_form_by_degcree}
Every polyhedral current $T$ of tridegree $(n-r,q,r)$ or $(p,n-r,r)$ is a $δ$-form. This follows from the observation that then all terms $α_σ\vert_τ$ in \eqref{eq:def_balancing} vanish. Alternatively, one argues that $d'T = 0$ resp. $d''T = 0$ because $T$ is of bidegree $(n,q+r)$ resp. $(p+r, n)$ as current and applies Thm. \ref{thm:delta_forms}.
\end{ex}

\begin{lem}\label{lem:pws_as_delta_forms}
The $δ$-forms $B^{p,q,0}(\mbR^n)$ are precisely the currents of the form $α\wedge [\mbR^n,µ_{\mr{std}}]$ for a piecewise smooth $(p,q)$-form $α\in PS^{p,q}(\mbR^n)$.
\end{lem}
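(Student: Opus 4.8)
The plan is to prove both inclusions by matching, in codimension $0$, the balancing condition supplied by Thm.~\ref{thm:delta_forms} with the gluing condition that defines a piecewise smooth form. First I would fix a polyhedral complex $\mcT$ subordinate to $T$ and normalize the data: every $σ\in\mcT^0$ is full-dimensional, so $N_σ=\mbR^n$ and its weight is a positive multiple of $µ_{\mr{std}}$; using the rescaling operation $(α,σ,µ)\mapsto(λα,σ,λ^{-1}µ)$ from Rmk.~\ref{rmk:presentation_of_current} I may assume every weight equals $µ_{\mr{std}}$. By \eqref{eq:pws_as_current} a piecewise smooth form $α=(α_ρ)$ produces exactly $\sum_{σ\in\mcT^0}α_σ\wedge[σ,µ_{\mr{std}}]$, so both sides of the claimed equality have this shape and the real content is to identify ``balanced'' with ``piecewise smooth''. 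I would also arrange, using the subdivision-invariance of balancedness and extension by zero coefficients (Rmk.~\ref{rmk:presentation_of_current}), that $\mcT$ covers $\mbR^n$: concretely, subdivide by the locally finite hyperplane arrangement spanned by the facets of the top cells and assign coefficient $0$ to any newly created cell, which does not alter $T$ nor its balancedness.

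The key local computation is at a codimension-$1$ face $τ$, which in a covering complex is a facet of exactly two top cells $σ_1,σ_2$ lying on opposite sides. Since both top weights are $µ_{\mr{std}}$ and they share the facet weight $ν_τ$, Def.~\ref{def:normal_vector} gives $µ_{\mr{std}}=ν_τ\wedge\ob{n_{σ_i,τ}}$ for $i=1,2$, which determines $\ob{n_{σ_i,τ}}\in N_σ/N_τ$ up to sign; the convention that it points toward $σ_i$ then forces $\ob{n_{σ_1,τ}}=-\ob{n_{σ_2,τ}}$. Hence the image of the balancing sum \eqref{eq:def_balancing} at $τ$ in $A(τ)\tensor_{\mbR}(\mbR^n/N_τ)$ equals $(α_{σ_1}\vert_τ-α_{σ_2}\vert_τ)\tensor\ob{n_{σ_1,τ}}$, and since this target is one-dimensional with $\ob{n_{σ_1,τ}}\neq0$, the balancing condition at $τ$ holds if and only if $α_{σ_1}\vert_τ=α_{σ_2}\vert_τ$. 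This already yields the forward inclusion: a piecewise smooth form satisfies $α_{σ_1}\vert_τ=α_τ=α_{σ_2}\vert_τ$ at every facet, so $T$ is balanced and thus a $δ$-form by Thm.~\ref{thm:delta_forms}.

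For the converse, starting from a $δ$-form $T=\sum_{σ\in\mcT^0}α_σ\wedge[σ,µ_{\mr{std}}]$ the previous paragraph gives that adjacent top cells agree on their common facet. To manufacture a piecewise smooth form I must define $α_ρ$ on every cell $ρ\in\mcT$ and verify that $α_σ\vert_ρ$ is independent of the top cell $σ\supseteq ρ$; this is the step requiring the most care. I would argue that the top cells containing a fixed face $ρ$ project, modulo $N_ρ$, onto the maximal cones of a complete fan in $\mbR^n/N_ρ$, whose maximal cones are connected through codimension-$1$ walls, so any two such top cells $σ,σ'$ are joined by a chain of top cells in which consecutive members share a facet containing $ρ$; restricting the facet-agreement along this chain to $ρ$ gives $α_σ\vert_ρ=α_{σ'}\vert_ρ$. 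Setting $α_ρ:=α_σ\vert_ρ$ is then well-defined and satisfies $α_ρ\vert_{ρ'}=α_{ρ'}$ for $ρ'\subseteq ρ$, so $α=(α_ρ)\in PS^{p,q}(\mbR^n)$, and \eqref{eq:pws_as_current} recovers $T=α\wedge[\mbR^n,µ_{\mr{std}}]$. The main obstacle will be precisely this connectedness-in-codimension-$1$ upgrade from facet-agreement to agreement on all faces, together with the preliminary reduction to a complex covering $\mbR^n$; the sign bookkeeping for the normal vectors in the facet computation is the other delicate point but becomes routine once the weights are normalized to $µ_{\mr{std}}$.
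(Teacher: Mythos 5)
Your proof is, at its core, the same as the paper's: normalize all top-dimensional weights to $\mu_{\mr{std}}$, observe that a cell $\tau\in\mcT^1$ is shared by two top cells with opposite normal vectors, and identify the balancing condition of Thm.~\ref{thm:delta_forms} at $\tau$ with the facet agreement $\alpha_{\sigma_1}\vert_\tau=\alpha_{\sigma_2}\vert_\tau$. The paper states exactly this computation in three lines and leaves implicit the two points you spell out — arranging that $\mcT$ covers $\mbR^n$, and upgrading facet agreement to well-defined restrictions on all lower-dimensional faces. Your second elaboration (connectedness of the star of $\rho$ through codimension-one walls, then chaining facet agreement and restricting to $\rho$) is correct and is a genuine gap-fill relative to the paper's terse ``which is equivalent to the $(\alpha_\sigma)_{\sigma\in\mcT^0}$ defining a piecewise smooth form.''

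One device in your preliminary reduction is not correct as stated: for an infinite, locally finite complex, the hyperplane arrangement generated by the affine spans of the facets of the top cells need not be locally finite, so subdividing by it does not in general produce a polyhedral complex in the paper's sense. For instance, in $\mbR^2$ one can choose locally finitely many top cells whose facets lie on segments of the lines $y=x/k$ placed near $x=k$, $k\geq 1$; the spanning lines all pass through the origin and accumulate there. This only matters in the converse direction (in the forward direction no completion is needed, since a piecewise smooth form comes by definition with a complex covering $\mbR^n$) and it is repairable — e.g.\ by a completion argument that works cell by cell on an exhaustion of $\mbR^n$, or by noting that balancing forces $\alpha_\sigma\vert_\tau=0$ at any facet $\tau$ adjacent to only one top cell, so that extension by zero across a completed complex is consistent. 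Note also that the paper itself silently assumes the same thing when it asserts that every $\tau\in\mcT^1$ is a facet of \emph{precisely} two top cells, which fails for a subordinate complex that does not cover $\mbR^n$ (e.g.\ a single half-space); so this is a shared, fixable imprecision rather than a defect of your approach alone.
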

\begin{proof}
Assume that $T\in P^{p,q,0}$ and write $T = \sum_{σ\in \mcT^0} α_σ\wedge [σ,µ_σ]$ for a weighted subordinate polyhedral complex $\mcT$. Observe that one may assume all occurring $µ_σ = µ_{\mr{std}}$ since $N_σ = \mbR^n$ for every $n$-dimensional $σ$. Any $τ\in \mcT^1$ is then a facet of precisely two $σ_1,σ_2\in \mcT^0$. One may choose $n_{σ_2,τ} = - n_{σ_1,τ}$. The balancing condition \eqref{eq:def_balancing} is then equivalent to $α_{σ_1}\vert_τ = α_{σ_2}\vert_τ$ for all such $τ\subset σ_1,σ_2$, which is equivalent to the $(α_σ)_{σ\in \mcT^0}$ defining a piecewise smooth form.
\end{proof}

\begin{ex}\label{ex:pws_delta_product}
Let $α\in PS(\mbR^n)$ be piecewise smooth and $T\in P(\mbR^n)$ a polyhedral current. Let $\mcT$ be a polyhedral complex that is subordinate to both $T$ and $α$, say
$$α = (α_σ)_{σ\in \mcT^0},\ \ \ T = \sum_{ρ\in \mcT} β_ρ\wedge [ρ,µ_ρ].$$
Define their product as
\begin{equation}\label{eq:prod_pws_polyhedral}
αT := \sum_{ρ\in \mcT} α\vert_ρ \wedge β_ρ\wedge [ρ,µ_ρ].
\end{equation}
The restriction $α\vert_ρ$ here is well-defined by the piecewise smooth property. If $T$ is a $δ$-form, then $αT$ is also a $δ$-form since the balancing condition \eqref{eq:def_balancing} is $PS$-linear. For example, the $δ$-preforms from \cite{GK}*{§2} are precisely the sums of products $α T$ where $α\in A(\mbR^n)$ is smooth and $T\in B^{0,0,r}$ a tropical cycle.
\end{ex}

We end this section by providing three ways to compute $\partial'T$. The case of $\partial''$ is the same by symmetry; it merely requires paying attention to difference in signs of \eqref{eq:def_boundary_integral_facet} and \eqref{eq:def_boundary_integral_facet_second}. Throughout, we assume that $T\in B^{p,q,r}$, say $T = \sum_{σ\in \mcT^r} α_σ\wedge [σ,µ_σ]$ for a subordinate weighted polyhedral complex $\mcT$. The proof of Thm. \ref{thm:delta_forms} shows that $\partial'T = \sum_{τ\in \mcT^{r+1}}β_τ\wedge [τ,µ_τ]$ for certain $β_τ$ which we would like to determine.

(1) Implicit in the proof of Thm. \ref{thm:delta_forms} is the following formula. Fix $τ$ and write
$$\sum_{τ\subset σ\ \text{a facet}} α_σ\vert_τ\tensor n_{σ,τ} = \sum_{i \in I} β_i\tensor v_i,\ \ \ β_i\in A(τ),\ v_i \in N_τ$$
as in \eqref{eq:aux_1}. Then \eqref{eq:boundary_general}, together with \eqref{eq:aux_2} and \eqref{eq:aux_3}, implies
\begin{equation}\label{eq:boundary_formula_1}
β_τ =  \sum_{i \in I} (β_i, v''_i) - \sum_{τ\subset σ} (α_σ, n''_{σ,τ})\vert_τ.
\end{equation}

(2) The next formula for $β_τ$ is more in line with formulation \eqref{eq:def_balancing_lin_functions} of the balancing condition. We use the definition $\partial' := d'_P - d'$ for all polyhedral currents in the following. Pick any affine linear map $f\colon \mbR^n\to \mbR^{\dim τ + 1}$ such that $f\vert_σ$ is injective for every $τ\subset σ\in \mcT^r$. Let $C = \bigcup_{τ\subset σ}σ$ be the polyhedral set formed by all $σ\in \mcT^r$ containing $τ$. Denote by $Z$ its boundary in the topological space $\bigcup_{σ\in \mcT^r} σ$.
The current $S = \sum_{τ\subset σ} α_σ\wedge [σ,µ_σ]$ has support contained in $C$ and is a $δ$-form away from $Z$. Since $f\vert_C$ has finite fibers, $\Supp S$ is relatively compact over $\mbR^m$, so the push-forward $f_*S$ is defined. It is a $δ$-form away from $f(Z)$. Moreover $f_*(d'_PS) = d'_P(f_*S)$ because $f\vert_S$ has finite fibers. It follows that $f_*(\partial'S) = \partial'f_*(S)$. Writing $\partial' f_*(S) = γ\wedge [f(τ),f(µ_τ)]$ away from $f(Z)$ shows
$$β_τ = f^*γ.$$
Now note that $(f_*S)\vert_{\mbR^{\dim τ + 1}\setminus f(Z)}$ lies in $B^{p,q,0}(\mbR^{\dim τ + 1}\setminus f(Z))$, i.e. $f_*S$ is given by a piecewise smooth form away from $f(Z)$ by Lem. \ref{lem:pws_as_delta_forms}. This makes the determination of $γ$ very simple: $f_*S$ is described near $f(τ)\setminus f(Z)$ by smooth forms $ω_i \in A^{p,q}(ρ_i)$ on two $(\dim τ + 1)$-dimensional polyhedra $ρ_1,\ ρ_2$ with $ρ_1\cap ρ_2 = f(τ)$. These satisfy $ω_1\vert_{f(τ)} = ω_2\vert_{f(τ)}$. Picking the normal vectors in the above \eqref{eq:boundary_formula_1} as $n := n_{ρ_1,f(τ)} = - n_{ρ_2,f(τ)}$ eliminates the first sum in \eqref{eq:boundary_formula_1} and shows
\begin{equation}\label{eq:boundary_formula_2}
γ = (ω_2, n'') - (ω_1,n'').
\end{equation}

(3) For the third and final formula, choose coordinate functions $x_1,\ldots,x_{n-r-1}\colon \mbR^n\to \mbR$ that restrict to a basis of $M_τ$. For each $σ$ containing $τ$, choose a non-constant affine linear function $z_σ\colon σ\to \mbR$ such that $z_σ\vert_τ$ is constant. (For example, one may choose an affine linear $z\colon \mbR^n\to \mbR$ such that $z\vert_τ$ is constant but $z_σ = z\vert_σ$ non-constant for every $σ\supset τ$.) Then every $α_σ$ can be uniquely expressed as
$$α_σ = α_σ^{(1)} + d'z_σ\wedge α_σ^{(2)} + d''z_σ \wedge α_σ^{(3)} + d'z_σ \wedge d''z_σ \wedge α_σ^{(4)}$$
with the $α_σ^{(j)}$ all $C^\infty(σ)$-linear combination of $d'x_I\wedge d''x_J$. Our claim is that
\begin{equation}\label{eq:boundary_formula_3}
β_τ = - \sum_{τ\subset σ\ \text{a facet}} \frac{\partial z_σ}{\partial n_{σ,τ}} α_σ^{(3)}\vert_τ.
\end{equation}
Note that already the individual summands are independent of the chosen $z_σ$.
\begin{proof}[Proof of the claim.]
In light of \eqref{eq:boundary_general}, we need to show that the following identity holds for all smooth forms $η\in A^{n-p-r-1, n-q-r}(\mbR^n)$ of complementary degree:
\begin{equation}\label{eq:boundary_formula_3_to_show}
\sum_{τ\subset σ\ \text{a facet}} (α_σ\wedge η, n_{σ,τ}'')\vert_τ = \sum_{τ\subset σ\ \text{a facet}} \frac{\partial z_σ}{\partial n_{σ,τ}} α_σ^{(3)}\wedge η\vert_τ.
\end{equation}
Since $d'z_σ\vert_τ = d''z_σ\vert_τ = 0$, it is immediately clear that
$$(α_σ\wedge η, n_{σ,τ}'')\vert_τ = (α_σ^{(1)} \wedge η, n_{σ,τ}'')\vert_τ + \frac{\partial z_σ}{\partial n_{σ,τ}} α_σ^{(3)}\wedge η\vert_τ.$$
Our task is thus to show
\begin{equation}\label{eq:boundary_formula_3_to_show_2}
\sum_{τ\subset σ\ \text{a facet}} (α_σ^{(1)}\wedge η, n_{σ,τ}'')\vert_τ = 0.
\end{equation}
Pick coordinate functions $y_1,\ldots,y_{r+1}\colon \mbR^n\to \mbR$ that extend $x_1,\ldots,x_{n-r-1}$ to a basis and that are constant along $τ$. Then $d'y_i\vert_τ = d''y_i\vert_τ = 0$. So if $η$ is of the form $d'y_i \wedge \wt{η}$, then already $(α_σ^{(1)}\wedge η, n_{σ,τ}'')\vert_τ = 0$. Similarly, if $η$ is a $C^\infty$-linear combination of monomials $d'x_I\wedge d''x_J$, then already $α^{(1)}_σ\wedge η = 0$ because this form is of degree $(n-r-1, n-r)$.

It is thus left to show \eqref{eq:boundary_formula_3_to_show_2} for forms $η = d''y_i \wedge \wt{η}$. We obtain that
\begin{equation}
\begin{aligned}
\sum_{τ\subset σ\ \text{a facet}} (α_σ^{(1)}\wedge d''y_i \wedge \wt{η}, n_{σ,τ}'')\vert_τ & = (-1)^{\deg T} \sum_{τ\subset σ\ \text{a facet}} \frac{\partial y_i}{\partial n_{σ,τ}} α_σ^{(1)} \wedge \wt{η}\vert_τ\\
& = (-1)^{\deg T} \sum_{τ\subset σ\ \text{a facet}} \frac{\partial y_i}{\partial n_{σ,τ}} α_σ \wedge \wt{η}\vert_τ.
\end{aligned}
\end{equation}
The last expression vanishes by the balancing condition \eqref{eq:def_balancing_lin_functions}.
\end{proof}

\section{Intersection Theory}
\label{s:intersection_theory}
\subsection{Main result}
\label{ss:intersection_main_results}

The definition of the $\wedge$-product of $δ$-forms is based on two specific constructions. The first is the product of piecewise smooth and $δ$-forms from Ex. \ref{ex:pws_delta_product}: By Lem. \ref{lem:pws_as_delta_forms}, every $δ$-form of tridegree $(p,q,0)$ is of the form $α\wedge [\mbR^n,μ_{\mr{std}}]$ for a (unique) piecewise smooth $(p,q)$-form $α$. We write $α$ by abuse of notation and define
\begin{equation}\label{eq:def_pws_product}
α\wedge T := αT,\ \ \ α\in B^{\bullet,\bullet,0},\ T\in B.
\end{equation}
The second construction is the exterior product of currents, cf. \cite{Demailly}*{§I.2}, defined as follows. Given homogeneous currents $T_1\in D(\mbR^n)$ and $T_2\in D(\mbR^m)$, it is the unique current $T_1\boxtimes T_2\in D(\mbR^n\times \mbR^m)$ such that
$$(T_1\boxtimes T_2)(p_1^*η_1\wedge p_2^*η_2) = (-1)^{\deg T_1\deg T_2}T_1(η_1)\cdot T_2(η_2).$$
In particular,
\begin{equation}\label{eq:tensor_product_current_derivative_d}
d(T_1\boxtimes T_2) = dT_1 \boxtimes T_2 + (-1)^{\deg T_1}T_1\boxtimes dT_2,\ \ \ d\in \{d',d''\}.
\end{equation}
The exterior product preserves polyhedral currents which follows from the identity
\begin{equation}
(α_1\wedge [σ_1,µ_1])\boxtimes (α_2\wedge [σ_2,µ_2]) = α_1\wedge α_2 \wedge [σ_1\times σ_2, µ_1\wedge µ_2].
\end{equation}
Relation \eqref{eq:tensor_product_current_derivative_d} then implies that the exterior product of $δ$-forms is a $δ$-form again. Moreover, one sees that if $T_i$ is of polyhedral tridegree $(p_i,q_i,r_i)$, then $T_1\boxtimes T_2$ has tridegree $(p_1+p_2,q_1+q_2,r_1+r_2)$. Separating \eqref{eq:tensor_product_current_derivative_d} by tridegree provides
\begin{equation}\label{eq:tensor_product_current_derivative_others}
\begin{aligned}
d_P(T_1\boxtimes T_2) &= d_PT_1 \boxtimes T_2 + (-1)^{\deg T_1}T_1\boxtimes d_PT_2,\ \ \ d_P\in \{d'_P,d''_P\},\\
\partial(T_1\boxtimes T_2) &= \partial T_1 \boxtimes T_2 + (-1)^{\deg T_1}T_1\boxtimes \partial T_2,\ \ \ \partial\in \{\partial',\partial''\}.
\end{aligned}
\end{equation}
We simply write $T_1\times T_2$ instead of $T_1\boxtimes T_2$ for $δ$-forms $T_1$ and $T_2$.

In the following, $\Delta = (\mr{id},\mr{id})_*[\mbR^n, μ_{\mr{std}}] \in B^{0,0,n}(\mbR^n\times \mbR^n)$ denotes the diagonal viewed as $δ$-form.
\begin{thm}\label{thm:main}
There is a unique way to define an associative product $\wedge\colon B\times B \to  B$ that satisfies the Leibniz rules with respect to $d'$ and $d''$, extends definition \eqref{eq:def_pws_product}, and can be computed by restriction to the diagonal, meaning
\begin{equation}\label{eq:inter_diagonal}
S\wedge T = p_{1,*}(\Delta \wedge (S\times T)).
\end{equation}
This product has the following additional properties.
\begin{enumerate}[wide, labelindent=0pt, labelwidth=!, label=(\arabic*), topsep=2pt, itemsep=2pt]
\item It is graded commutative and trihomogeneous in the sense $B^{p,q,r}\wedge B^{s,t,u} \subseteq B^{p+s, q+t, r+u}$. In particular, it satisfies the Leibniz rule with respect to the operators $\partial'$, $d_P'$, $\partial''$ and $d_P''$.

\item It commutes with pull-back: Given a surjective affine linear map $f\colon \mbR^n\to \mbR^m$ and $S,T\in B(\mbR^m)$, then $f^*(T\wedge S) = f^*T \wedge f^*S.$

\item It satisfies the projection formula: Given a surjective affine linear map $f\colon \mbR^n\to \mbR^m$, a $δ$-form $S\in B(\mbR^m)$ and a $δ$-form $T\in B(\mbR^n)$ with support compact over $\mbR^m$,
\begin{equation}\label{eq:projection_main_thm}
f_*(T\wedge f^*S) = f_*T \wedge S.
\end{equation}

\item It coincides with the tropical intersection products from \cites{AR, Esterov, GK} on $\bigoplus_{r} B^{0,0,r}$ whenever they are defined.
\end{enumerate}
\end{thm}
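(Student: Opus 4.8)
The plan is to reduce the whole statement to one new operation, intersection with the diagonal, and to let the Leibniz rule dictate its shape. By \eqref{eq:inter_diagonal} the product is forced to be $S\wedge T=p_{1,*}\big(\Delta\wedge(S\times T)\big)$, and the exterior product $\times=\boxtimes$ together with its derivations \eqref{eq:tensor_product_current_derivative_d} is already at our disposal; so everything rests on constructing $\Delta\wedge(-)$ on $B(\mbR^n\times\mbR^n)$. The diagonal is the transversal intersection of the $n$ tropical hyperplanes $\{x_i=y_i\}$, each being the corner locus $\operatorname{div}(\psi_i)$ of the piecewise linear function $\psi_i=\max(x_i,y_i)$; in current language this is the tropical Poincaré--Lelong identity $\operatorname{div}(\psi_i)=d'd''[\psi_i]$, and since $\psi_i$ is affine on cells (so $d'_Pd''_P[\psi_i]=0$) it simplifies to $\operatorname{div}(\psi_i)=-\partial' d''_P[\psi_i]\in B^{0,0,1}$.

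First I would make the single-divisor operation explicit. If the sought product is to obey the Leibniz rule for $\partial'$ and to extend the piecewise smooth product \eqref{eq:def_pws_product}, then $\operatorname{div}(\psi)\wedge D=-(\partial' d''_P\psi)\wedge D$ is forced to equal
\[ \operatorname{div}(\psi)\cdot D:=-\,\partial'\big((d''_P\psi)\,D\big)-(d''_P\psi)\,(\partial' D), \]
because $d''_P\psi\in B^{0,1,0}$ is a piecewise smooth form and both products on the right are instances of the already-defined product of Ex.~\ref{ex:pws_delta_product}. I would take this as the definition. Its right-hand side is manifestly a $\delta$-form of tridegree $(p,q,r+1)$ whenever $D\in B^{p,q,r}$, since $B$ is stable under $\partial'$ and under multiplication by piecewise smooth forms; the content of Thm.~\ref{thm:delta_forms} is precisely what makes this legitimate, so no balancing has to be re-checked by hand. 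Setting $\Delta\wedge D:=\operatorname{div}(\psi_1)\cdot\big(\cdots\operatorname{div}(\psi_n)\cdot D\big)$ and then $S\wedge T:=p_{1,*}\big(\Delta\wedge(S\times T)\big)$ yields a construction; and because every step above was forced, the same computation delivers the uniqueness clause.

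The bulk of the work is well-definedness of $\Delta\wedge(-)$: that the $n$ operations $\operatorname{div}(\psi_i)\cdot$ commute, so the composite is independent of their order, and that the outcome depends only on $\Delta$ and not on the chosen cutting functions $\psi_i$. This is the exact analogue of the commutativity-of-divisors lemma of Allermann--Rau \cite{AR}, and I expect it to be the main obstacle: it is where the balancing of Thm.~\ref{thm:delta_forms} and the combinatorics of normal vectors (via the boundary formulas of §\ref{s:delta_forms}) do the genuine work, whereas everything downstream is formal. Granting it, the listed properties follow. Trihomogeneity $B^{p,q,r}\wedge B^{s,t,u}\subseteq B^{p+s,q+t,r+u}$ is degree bookkeeping: $\Delta\in B^{0,0,n}$, $\times$ adds tridegrees, and $p_1$ is injective on $\Supp\Delta$, so $p_{1,*}$ shifts tridegree by $(0,0,-n)$. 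The Leibniz rules for $d',d''$ descend from \eqref{eq:tensor_product_current_derivative_d} and $f_*d=df_*$ through the definition, and those for $d'_P,d''_P,\partial',\partial''$ then follow by separating the $d',d''$ Leibniz identities into tridegrees exactly as in \eqref{eq:tensor_product_current_derivative_others}. For associativity and graded commutativity I would not argue with $\operatorname{div}$ directly but transport the corresponding properties of $\boxtimes$: using associativity of $\boxtimes$, compatibility of the diagonal intersection with $\boxtimes$, and functoriality of push-forward (Lem.~\ref{lem:differential_calculus_delta_forms}), both $(S\wedge T)\wedge U$ and $S\wedge(T\wedge U)$ would be identified with the $\mbR^{3n}$-expression $p_{1,*}\big(\Delta^{(3)}\wedge(S\times T\times U)\big)$ for the small diagonal $\Delta^{(3)}$; graded commutativity descends from that of $\boxtimes$ and the symmetry of $\Delta$. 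Properties (2) and (3) come from the same functoriality package of §\ref{ss:functoriality}: the projection formula for $f_*$, the description of $f^*$ as the dual of fiber integration, and $f^*d=df^*$, once $\Delta\wedge(-)$ is seen to commute with the relevant push-pull operations.

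Finally, property (4). On $\bigoplus_r B^{0,0,r}$ the coefficients $\alpha_\sigma$ are functions and $\partial'D=0$, so the displayed formula collapses to $\operatorname{div}(\psi)\cdot D=-\partial'\big((d''_P\psi)\,D\big)$, which by the boundary formula \eqref{eq:boundary_formula_3} is exactly the classical weighted corner-locus/divisor of \cite{AR}. Hence the iterated diagonal construction is literally the Allermann--Rau intersection, and our product agrees with theirs, with Esterov's in the non-rational case, and with Gubler--Künnemann's \cite{GK}, wherever those are defined.
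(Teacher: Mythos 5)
Your construction follows the same route as the paper's: define divisor intersection by the corner-locus formula forced by the Leibniz rule, iterate over the $n$ divisors $d'd''\max\{x_i,y_i\}$ cutting out the diagonal, and push forward along $p_1$. But as written it is an outline rather than a proof, because the steps you defer are exactly where the work lies. The commutativity of the operations $\operatorname{div}(\psi_i)\cdot$ --- which you correctly identify as the main obstacle and then simply grant --- is Lem.~\ref{lem:div_inter_commutative} of the paper; it is proved there by first establishing a combinatorial formula for $d'd''\varphi\cdot T$ (Lem.~\ref{lem:div_inter_combinatorial}, resting on the boundary formula \eqref{eq:boundary_formula_3}) and then exhibiting the coefficient of each codimension-$(r+2)$ cell as an expression symmetric in the two divisors. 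Nothing in your text supplies this computation or an alternative to it, so the product is not yet well defined.

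Beyond that, the claim that ``everything downstream is formal'' is not accurate; two further non-formal inputs are missing. First, the diagonal computation $D_1\cdots D_c\cdot(\mbR^c\times T)=g_*T$ (the paper's Lem.~\ref{lem:div_inter_diagonal}) never appears in your argument, yet without it your definition is not known to extend \eqref{eq:def_pws_product} (even $1\wedge T=T$ is unclear); it is also the engine behind the projection formula (3), the pull-back compatibility (2), associativity, and the identity $S\times T=p_1^*S\wedge p_2^*T$ needed to make sense of \eqref{eq:inter_diagonal}. Second, the Leibniz rule for the constructed product does not descend from \eqref{eq:tensor_product_current_derivative_d} alone: after moving the derivative through the exterior product one still needs $d(C\wedge X)=C\wedge dX$ for each divisor factor $C$, which the paper proves in step (f) using $(\partial')^2=0$, $\partial'\partial''=-\partial''\partial'$ (Lem.~\ref{lem:differential_calculus_delta_forms}) and Lem.~\ref{lem:product_pws_properties}; defining $\operatorname{div}(\psi)\cdot D$ so that a $\partial'$-Leibniz identity holds by fiat does not make the $d'$, $d''$ Leibniz rules for the iterated product automatic. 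Finally, a smaller logical slip in the uniqueness argument: you force the formula for $\operatorname{div}(\psi)\wedge D$ from the Leibniz rule for $\partial'$, but $\partial'$-Leibniz is a derived property (part of (1)), not one of the characterizing hypotheses; uniqueness must be extracted from the $d'$, $d''$ Leibniz rules as in \eqref{eq:div_inter}, which yields the equivalent forced expression $d'(d''\psi\wedge T)+d''\psi\wedge d'T$.
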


The idea of characterizing and constructing the tropical intersection product through divisor intersections and restriction to the diagonal is due to Allermann--Rau \cite{AR}.

\begin{proof}[Proof of the uniqueness assertion.]
If a $\wedge$-product exists as claimed, the Leibniz rule implies for piecewise smooth $α$ that
\begin{equation}\label{eq:prod_w_pws_aux}
d'd''(αT) = d'd''α\wedge T + α\wedge d'd''T + (-1)^{\deg α} (d'α\wedge d''T - d''α\wedge d'T). 
\end{equation}
In case of a piecewise linear function $φ$, the $δ$-forms $d'φ$ resp. $d''φ$ agree with $d'_Pφ$ resp. $d''_Pφ$ and are again piecewise smooth, because the contractions in \eqref{eq:boundary_formula_1} vanish for degree reasons. (This applies more generally to piecewise smooth functions.) It follows that
\begin{equation}\label{eq:div_inter}
d'd''φ\wedge T = d'd''(φT) - φ\wedge d'd''T + d''φ\wedge d'T - d'φ\wedge d''T
\end{equation}
is uniquely determined by the Leibniz rule and the piecewise smooth case. Denote by $x_1,\ldots,x_n$ and $y_1,\ldots,y_n$ the coordinate functions on $\mbR^n\times \mbR^n$ and let $φ_i := \max\{x_i,y_i\}$. Then, by \cite{AR}*{Rmk. 9.2}, the diagonal $\Delta$ is the product
$$\Delta = d'd''φ_1\wedge\ldots\wedge d'd''φ_n,$$
where the right hand side is a successive application of \eqref{eq:div_inter}. Again by \eqref{eq:div_inter} as well as the associativity of the $\wedge$-product, $Δ\wedge (S\times T)$ is now uniquely determined. Hence $S\wedge T = p_{1,*}(Δ\wedge (S \times T))$ is uniquely characterized by the stated conditions.
\end{proof}

The existence statement will be shown in the next section. Here, we give an application of Thm. \ref{thm:main} to the definition of a pull-back for all affine linear maps, not just surjective ones. It is specific to $δ$-forms, meaning it does not extend to polyhedral currents. Its construction is well-known for tropical cycles, cf. \cite{GK}*{Rmk. 1.4 (v)} for example.

\begin{propdef}\label{propdef:pullback}
Let $f\colon \mbR^n\to \mbR^m$ be an affine linear map and $S\in B(\mbR^m)$ a $δ$-form. There is a unique $δ$-form $f^*(S)\in B(\mbR^n)$, called the pull-back of $S$ along $f$, that satisfies the projection formula
\begin{equation}\label{eq:projection_formula_general}
f_*(T\wedge f^*S) = f_*T \wedge S.
\end{equation}
This pull-back is functorial in $f$ and commutes with $\wedge$-products as well as all the six differential operators.
\end{propdef}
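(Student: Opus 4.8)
The plan is to reduce to the already-available surjective pullback via the graph factorization $f = p_2\circ γ$, where $γ = (\id, f)\colon \mbR^n\to\mbR^n\times\mbR^m$ is the graph embedding and $p_2\colon\mbR^n\times\mbR^m\to\mbR^m$ the projection. Writing $\Gamma := γ_*[\mbR^n,μ_{\mr{std}}]\in B^{0,0,m}(\mbR^n\times\mbR^m)$ for the graph and $p_1$ for the other projection, I would define
$$f^*S := p_{1,*}\big(\Gamma\wedge p_2^*S\big).$$
First I would check this is a well-defined $δ$-form of the same tridegree as $S$: the factor $\Gamma$ forces $\Gamma\wedge p_2^*S$ to be supported on $γ(\mbR^n)$, on which $p_1$ restricts to an isomorphism and in particular a proper map, so $p_{1,*}$ is defined and lands in $B$ by Lem. \ref{lem:differential_calculus_delta_forms}(2). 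Tracking tridegrees through the exterior product and the pushforward — the kernel of $p_1$ along $\Gamma$ being trivial — gives $f^*S\in B^{p,q,r}$ for $S\in B^{p,q,r}$.

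The core of the proof is the projection formula \eqref{eq:projection_formula_general}, and its crux is the self-intersection identity for the section $γ$,
$$γ_*T = \Gamma\wedge p_1^*T,\qquad T\in B_c(\mbR^n),$$
which I would verify on a weighted polyhedron $α\wedge[σ,μ]$: the intersection $\Gamma\cap(σ\times\mbR^m) = γ(σ)$ is transverse, and computing its weight from the rule \eqref{eq:prod_of_weight} reproduces $γ_*(α\wedge[σ,μ])$. Granting this, the projection formula follows formally. Using the projection formula for the surjective map $p_1$ (Thm. \ref{thm:main}(3)), then that a current supported on $γ(\mbR^n)$ has equal pushforward under $f\circ p_1$ and $p_2$ (which agree on $γ(\mbR^n)$), then the identity above and the projection formula for $p_2$, one computes
\begin{align*}
f_*\big(T\wedge f^*S\big) &= (f\circ p_1)_*\big(p_1^*T\wedge\Gamma\wedge p_2^*S\big) = p_{2,*}\big(p_1^*T\wedge\Gamma\wedge p_2^*S\big)\\
&= p_{2,*}\big(γ_*T\wedge p_2^*S\big) = p_{2,*}(γ_*T)\wedge S = f_*T\wedge S,
\end{align*}
with signs tracked by graded commutativity. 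I expect this to be the main obstacle: the identity $γ_*T = \Gamma\wedge p_1^*T$ is the one genuinely new computation, while the replacement of $(f\circ p_1)_*$ by $p_{2,*}$ relies on the (elementary but necessary) fact that the pushforward of a current supported on an affine subspace depends only on the restriction of the map to that subspace.

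Uniqueness I would obtain from non-degeneracy of the intersection pairing: if $R\in B(\mbR^n)$ satisfies $f_*(T\wedge R)=0$ for all $T\in B_c(\mbR^n)$, then $(T\wedge R)(f^*ω)=0$ for all $T$ and all test forms $ω$ on $\mbR^m$ (here $f^*ω$ is the ordinary smooth pullback, so there is no circularity), and a local bump-form argument — choosing $T$ transverse to the fibers of $f$ along a maximal cell of $R$ where its coefficient is nonzero — forces $R=0$; the only point requiring care is to choose $T$ so as to probe the directions that $f$ collapses. Finally, functoriality, compatibility with $\wedge$, and commutation with the six operators all follow formally. For $\wedge$: the form $f^*S\wedge f^*S'$ satisfies the projection formula characterizing $f^*(S\wedge S')$ — apply \eqref{eq:projection_formula_general} twice, first to the compactly supported test form $T\wedge f^*S$ — so the two agree by uniqueness, and $(g\circ f)^*=f^*\circ g^*$ is identical in spirit. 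Commutation with $d'$ and $d''$ I would read off the defining formula, since $p_{1,*}$, $p_2^*$ and $\Gamma\wedge(-)$ each commute with $d'$, $d''$ (for the last, $d'\Gamma = d''\Gamma = 0$, because $\Gamma = γ_*[\mbR^n,μ_{\mr{std}}]$, $d'[\mbR^n,μ_{\mr{std}}]=0$ and $γ_*$ commutes with the differentials); commutation with $d'_P$, $d''_P$, $\partial'$, $\partial''$ then follows because $f^*$ preserves tridegree and $d' = d'_P - \partial'$, $d'' = d''_P - \partial''$ split by tridegree.
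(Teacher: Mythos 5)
Your proposal is correct and follows essentially the same route as the paper: the same definition $f^*S := p_{1,*}(\Gamma\wedge p_2^*S)$ via the graph, the same chain of projection-formula identities resting on the key fact $\gamma_*T = \Gamma\wedge p_1^*T$ (with the same care about replacing $(f\circ p_1)_*$ by $p_{2,*}$ on currents supported on the graph), and uniqueness by pairing against forms $f^*\omega$ with $T$ a compactly supported test form, which is exactly the paper's mechanism. The only harmless deviations are that you verify the graph identity by a direct transversal-intersection computation (Lem.~\ref{lem:inter_transversal}) where the paper deduces it from the projection formula for $p_1$ and the bijectivity of $\Supp \Gamma \to \mbR^n$, and that you check commutation with $d'$, $d''$ directly from the defining formula rather than from the uniqueness property.
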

\begin{proof}
Identity \eqref{eq:projection_formula_general} determines $f^*S$ uniquely because it determines all its values on test forms $η$ by
$$\begin{aligned}
(f^*S)(η) & = \int_{\mbR^n} f^*S \wedge η \\
& = \int_{\mbR^m} f_*(f^*S\wedge η) = \int_{\mbR^m} S \wedge f_*(η).
\end{aligned}$$
Just from this uniqueness, one may deduce all further properties. For example, for $d\in \{d',d''\}$ and for all $δ$-forms $T$,
$$\begin{aligned}
f_*(T\wedge d(f^*S)) & = (-1)^{\deg T} f_*\big(d(T\wedge f^*S) - dT\wedge f^*S\big)\\
& = (-1)^{\deg T} \big(d(f_*(T\wedge f^*S)) -  f_*(dT \wedge f^*S)\big)\\
& = (-1)^{\deg T} \big(d(f_*T \wedge S) - f_*(dT)\wedge S\big) = f_*T \wedge dS.
\end{aligned}
$$
So necessarily $d(f^*S) = f^*(dS)$. For commutativity with $\wedge$-products, we compute
$$\begin{aligned}
f_*(T\wedge f^*S_1 \wedge f^*S_2) & = f_*(T\wedge f^*S_1) \wedge S_2 \\
&= f_*T\wedge S_1 \wedge S_2
\end{aligned}$$
and then deduce $f^*S_1 \wedge f^*S_2 = f^*(S_1 \wedge S_2)$. We omit the verification of the remaining properties which are shown similarly.

To show existence of $f_*$, we consider the graph $Γ_f = (\mr{id},f)_*[\mbR^n, µ_{\mr{std}}]$ as a $δ$-form on $\mbR^n\times \mbR^m$. We claim that the following definition satisfies \eqref{eq:projection_formula_general}:
$$f^*S := p_{1,*}(Γ_f \wedge p_2^*S).$$
The next succession of identities verifies that claim. The first four equalities come either by definition or from the projection formula \eqref{eq:projection_main_thm}. The last equality will be explained below.
\begin{equation}
\begin{aligned}
f_*(T\wedge f^*S) & = f_*(T \wedge p_{1,*}(Γ_f \wedge p_2^*S))\\
&= f_*(p_{1,*}(p_1^*T \wedge Γ_f \wedge p_2^*S))\\
&= p_{2,*}(p_1^*T\wedge Γ_f \wedge p_2^*S)\\
&= p_{2,*}(p_1^*T\wedge Γ_f) \wedge S\\
&= f_*T \wedge S.
\end{aligned}
\end{equation}
The last equality comes from the identity $p_{2,*}(p_1^*T\wedge Γ_f) = f_*T$ which may be seen as follows. The form $p_1^*T\wedge Γ_f$ has support contained in $\Supp Γ_f$ and has the property $p_{1,*}(p_1^*T\wedge Γ_f) = T \wedge p_{1,*}Γ_f = T$ by the projection formula \eqref{eq:projection_main_thm}. As $\Supp Γ_f \to \mbR^n$ is bijective, this means $p_1^*T\wedge Γ_f = (\mr{id},f)_*(T)$. It is then merely left to note that $p_2 \circ (\mr{id},f) = f$ and the proof is complete.
\end{proof}

\begin{ex}\label{ex:general_pull_back}
Every affine linear map $f\colon \mbR^n\to \mbR^m$ can be factored as $f = h\circ g$, where $g\colon \mbR^n\to \mbR^k$ is surjective and $h\colon \mbR^k\to \mbR^m$ injective. Functoriality gives $f^* = g^*\circ h^*$ where $g^*$ is the pull-back of currents from \S\ref{ss:smooth_forms}.

Put $L = h_*[\mbR^k,μ_{\mr{std}}] \in B^{0,0,m-k}(\mbR^m)$. Then \eqref{eq:projection_formula_general} with $T = 1$ (constant function) comes out as
$$h_*h^*S = h_*(1 \wedge h^*S) = L \wedge S.$$
Especially interesting here is the property of $h^*$ to commute with $\wedge$-products, cf. Prop. \ref{propdef:pullback}. It specializes to
$$(L\wedge S_1) \wedge_L (L\wedge S_2) = L\wedge S_1 \wedge S_2,$$
where $\wedge_L$ denotes the wedge product on $L$.
\end{ex}

\subsection{Existence of the $\wedge$-product}
\label{ss:intersection_proofs}

This section proves the existence of the $\wedge$-product. We begin with some Leibnize rule properties of the product with piecewise smooth forms in \eqref{eq:def_pws_product}.

\begin{lem}\label{lem:product_pws_properties}
Let $T$ be a $δ$-form.
\begin{enumerate}[wide, labelindent=0pt, labelwidth=!, label=(\arabic*), topsep=2pt, itemsep=2pt]
\item For every homogeneous piecewise smooth form $α$ and polyhedral derivative $d_P\in \{d'_P, d''_P\}$,
$$d_P(α\wedge T) = d_Pα \wedge T + (-1)^{\deg α} α\wedge d_PT.$$
\item For every homogeneous piecewise smooth form $α \in B^{p, 0,0}$,
$$\partial' (α\wedge T) = (-1)^{\deg α} α \wedge \partial' T$$
and hence
$$d'(α\wedge T) = d'α \wedge T + (-1)^{\deg α} α \wedge d'T.$$
\item Analogously, for every homogeneous piecewise smooth form $α \in B^{0, q,0}$,
$$\partial'' (α\wedge T) = (-1)^{\deg α} α \wedge \partial'' T$$
and hence
$$d''(α\wedge T) = d''α \wedge T + (-1)^{\deg α} α \wedge d''T.$$
\end{enumerate}
\end{lem}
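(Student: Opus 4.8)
The plan is to prove (1) by a direct termwise computation and then to deduce (2) and (3) from it together with the combinatorial boundary formula \eqref{eq:boundary_formula_1}. Throughout I fix a polyhedral complex $\mcT$ subordinate to both $α$ and $T$, write $α=(α_σ)_{σ\in\mcT}$ with the compatibility $α_σ\vert_τ=α_τ$, and $T=\sum_{σ\in\mcT^r}φ_σ\wedge[σ,µ_σ]$, so that $α\wedge T=αT=\sum_σ α_σ\wedge φ_σ\wedge[σ,µ_σ]$ by Ex.~\ref{ex:pws_delta_product}. Since every operator in sight is additive and sends tridegree pieces to tridegree pieces, I may assume $T\in B^{p',q',r}$ trihomogeneous and $α$ bihomogeneous.

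For (1) I apply $d'_P$ termwise (Def.~\ref{def:polyhedral_derivative}) and expand $d'(α_σ\wedge φ_σ)$ by the Leibniz rule for smooth forms on $σ$. Because restriction commutes with $d'$, the derivatives $d'α_σ$ are the pieces of the piecewise smooth form $d'_Pα$, so the two resulting sums are precisely $d'_Pα\wedge T$ and $(-1)^{\deg α}α\wedge d'_PT$; the case $d''_P$ is identical. This step is routine.

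For (2) the decisive point is that $\partial'α=d'_Pα-d'α$ has tridegree $(p,-1,1)=0$, hence $\partial'α=0$ and $d'α=d'_Pα$ is again piecewise smooth. The asserted identity $\partial'(α\wedge T)=(-1)^{\deg α}α\wedge\partial'T$ is thus a Leibniz rule whose $\partial'α\wedge T$ term is absent, and I verify it coefficientwise through \eqref{eq:boundary_formula_1}, using the \emph{same} normal vectors $n_{σ,τ}$ for $αT$ and for $T$. Writing $\sum_{τ\subset σ}φ_σ\vert_τ\otimes n_{σ,τ}=\sum_j β_j\otimes w_j$ as in \eqref{eq:aux_1}, the compatibility $α_σ\vert_τ=α_τ$ shows the corresponding datum for $αT$ is $\sum_j(α_τ\wedge β_j)\otimes w_j$. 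Now $α_τ$ has bidegree $(p,0)$, so its contraction against any $''$-vector vanishes, giving $(α_τ,w''_j)=0$ and $(α_σ,n''_{σ,τ})=0$; the Leibniz rule for contraction then pulls $α_τ$ out of both sums of \eqref{eq:boundary_formula_1} with the uniform sign $(-1)^{\deg α}$. Comparing coefficients of $[τ,µ_τ]$ — those of $α\wedge\partial'T$ being $α_τ$ wedged with those of $\partial'T$ — yields $\partial'(α\wedge T)=(-1)^{\deg α}α\wedge\partial'T$, and the second formula of (2) then follows from $d'=d'_P-\partial'$, part (1), and $d'α=d'_Pα$.

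Part (3) is formally symmetric: for $α\in B^{0,q,0}$ one has $\partial''α=0$, and the contractions occurring in the $\partial''$-analogue of \eqref{eq:boundary_formula_1} are against the $'$-vector $n'_{σ,τ}$, which annihilates $α$ because it has bidegree $(0,q)$; here one only tracks the sign difference between \eqref{eq:def_boundary_integral_facet} and \eqref{eq:def_boundary_integral_facet_second} encoded in $\partial''$. I expect the main obstacle to be bookkeeping rather than conceptual: checking that the contraction-Leibniz rule extracts $α_τ$ with the correct sign uniformly across both sums of \eqref{eq:boundary_formula_1}, and recognizing that the vanishing of the transverse contractions $(α_σ,n''_{σ,τ})$ is exactly what removes the would-be $\partial'α\wedge T$ contribution. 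Note that the non-smoothness of $α$ never causes analytic trouble, since it is absorbed into the combinatorial formula \eqref{eq:boundary_formula_1} rather than into the current-level Leibniz rule \eqref{eq:Leibniz_current}, which does not apply to a merely piecewise smooth coefficient.
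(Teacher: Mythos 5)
Your proof is correct and takes essentially the same approach as the paper: part (1) is reduced polyhedron by polyhedron to the Leibniz rule for smooth forms, and parts (2)--(3) rest on exactly the paper's observation that the contractions in \eqref{eq:boundary_formula_1} are linear up to the sign $(-1)^{\deg \alpha}$ under multiplication by piecewise smooth forms of bidegree $(p,0)$ resp.\ $(0,q)$, precisely because such forms are annihilated by contraction against $''$- resp.\ $'$-vectors. Your explicit remark that $\partial'\alpha = 0$, hence $d'\alpha = d'_P\alpha$, is the same degree argument the paper invokes elsewhere (after \eqref{eq:prod_w_pws_aux}) and is indeed needed for the ``hence'' clauses.
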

\begin{proof}
Identity (1) may be checked polyhedron by polyhedron and, in this way, reduces to the Leibniz rule for smooth forms. Identities (2) and (3) follow from the observation that the contractions in \eqref{eq:boundary_formula_1} are linear (up to the sign $(-1)^{\deg α}$) with respect to multiplication by piecewise smooth functions in the stated degrees.
\end{proof}

By a \emph{divisor} we mean a $d'$-closed and $d''$-closed $δ$-form of tridegree $(0,0,1)$. These are the tropical cycles with constant coefficients of codimension $1$ in classical terminology.

\begin{lem}\label{lem:divisors_are_Cartier}
Let $D$ be a divisor on $\mbR^n$. Then there exists a piecewise linear function $φ$ such that $D = d'd''φ$. It is unique up to addition of affine linear functions.
\end{lem}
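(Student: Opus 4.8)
The plan is to first make divisors fully concrete. If $D\in B^{0,0,1}$ is $d'$- and $d''$-closed, then for degree reasons $\partial' D\in B^{0,-1,2}=0$ and $\partial'' D\in B^{-1,0,2}=0$, so by Thm.~\ref{thm:delta_forms} one has $d'D=d'_PD$ and $d''D=d''_PD$. Writing $D=\sum_{σ\in\mcT^1}a_σ\wedge[σ,µ_σ]$ for a subordinate complex $\mcT$, closedness forces $d'a_σ=d''a_σ=0$ on each (connected) $σ$, so every $a_σ\in\mbR$ is a genuine constant. Hence a divisor is exactly a balanced codimension-$1$ complex with constant coefficients, and \eqref{eq:def_balancing} reads: at every ridge $τ\in\mcT^2$,
$$\sum_{τ\subset σ\ \text{a facet}} a_σ\, n_{σ,τ}\ \in\ N_τ.$$
Refining $\mcT$ and adding walls with coefficient $0$, I may assume $\mcT$ is a complete polyhedral complex of $\mbR^n$, so each wall $σ\in\mcT^1$ is a facet of exactly two top cells.

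\textbf{The ansatz and the slope-jump formula.} I would look for $φ$ that is affine on each top cell $P$, say $φ|_P=\langle m_P,\cdot\rangle+c_P$ with $m_P\in M$, and continuous across walls (this continuity is exactly what makes $φ$ piecewise linear). Then $d''φ\in B^{0,1,0}$ is piecewise constant, so $d'_P(d''φ)=0$ and $d'd''φ=-\partial'(d''φ)\in B^{0,0,1}$ is supported on the walls. Evaluating the contraction in \eqref{eq:boundary_formula_1} (equivalently, the Stokes computation in the proof of Thm.~\ref{thm:delta_forms}), the coefficient of $[σ,µ_σ]$ in $d'd''φ$ is the jump in normal slope $\langle m_{P_1}-m_{P_2},\,n_{P_1,σ}\rangle$, where $P_1,P_2$ are the two cells across $σ$ and $n_{P_2,σ}=-n_{P_1,σ}$. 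Continuity of $φ$ forces $m_{P_1}-m_{P_2}$ to annihilate $N_σ$, so this number is well defined, and the problem reduces to prescribing, across each wall, the slope jump $m_{P_1}-m_{P_2}=a_σ\,u_σ$, where $u_σ\in M$ is the conormal of $σ$ normalized by $\langle u_σ,n_{P_1,σ}\rangle=1$.

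\textbf{Discrete integration.} What remains is to integrate the prescribed differences $m_{P_1}-m_{P_2}=a_σ u_σ$ over the dual graph (vertices the top cells, edges the walls): choose a spanning tree, fix $m_{P_0}$ on a base cell, propagate along the tree, and pin down the constants $c_P$ by matching $φ$ on walls. Well-definedness requires the prescribed jumps to sum to zero around every cycle of the dual graph. Since $\mbR^n$ is contractible, its first homology is generated by small loops encircling the codimension-$2$ ridges $τ$, so the only conditions to check are, for each $τ$, that $\sum_{τ\subset σ}\pm a_σ u_σ=0$ in the two-dimensional space $N_τ^{\perp}\subseteq M$, with signs given by the cyclic order of the walls around $τ$.

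\textbf{The crux, and uniqueness.} The heart of the argument is to recognize this integrability condition as the balancing condition at $τ$. Working in the two-dimensional quotient $W=\mbR^n/N_τ$ equipped with the induced weight, the images $\ob{n_{σ,τ}}$ of the in-wall normals and the conormals $u_σ$ are exchanged (up to the wall weights) by the canonical $90^\circ$-rotation duality $W\xrightarrow{\sim}W^{\vee}$ furnished by the area form coming from $µ_τ$ versus $µ_{\mr{std}}$. Under this identification, ``$\sum_σ a_σ n_{σ,τ}\in N_τ$'', i.e.\ the image in $W$ vanishes, is equivalent to ``$\sum_σ a_σ u_σ=0$'' in $W^{\vee}$; balancing is therefore exactly the consistency needed, and the propagation yields a global piecewise linear $φ$ with $d'd''φ=D$. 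I expect the main obstacle to lie precisely here: checking that the signs in the cyclic ordering around $τ$ match the rotation duality and the weight normalizations, together with the (standard but needing care) topological input that ridge-loops generate all dual cycles. Finally, uniqueness is easy: if $d'd''φ=0$, all slope jumps vanish, so $m_P$ is constant across adjacent cells and hence globally constant by connectedness of $\mbR^n$; then $φ-\langle m,\cdot\rangle$ is locally constant and continuous on the connected $\mbR^n$, so $φ$ is affine linear. Applying this to the difference of two solutions gives uniqueness up to affine functions.
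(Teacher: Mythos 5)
Your route is genuinely different from the paper's. The paper does not re-prove the combinatorial statement at all: it reduces to Lagerberg's results on positive closed currents. It covers $\mbR^n$ by convex relatively compact opens $U_i$, adds a finite combination $H_i$ of weighted hyperplanes so that $(D+H_i)\vert_{U_i}$ becomes positive, invokes Lagerberg to write this as $d'd''$ of a convex function, treats hyperplanes by hand, and glues the local solutions using uniqueness-up-to-affine-functions. You instead run the classical tropical ``Weil equals Cartier on $\mbR^n$'' argument: constant coefficients, slope-jump formula, discrete integration over the dual graph with balancing at the ridges as the integrability condition. This is a legitimate, more self-contained alternative (no convexity input), and it sits closer to the combinatorial formulas \eqref{eq:boundary_formula_1}--\eqref{eq:boundary_formula_3} that the paper develops anyway. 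Your preliminary reductions are correct: $\partial'D$ and $\partial''D$ vanish for degree reasons, so closedness forces locally constant coefficients, and your jump formula is exactly \eqref{eq:boundary_formula_2} applied to the piecewise constant form $d''φ$ (using $d'_Pd''φ=0$). Moreover, the crux you flagged does work out as you guessed: in the slice $W=\mbR^n/N_τ$, the area form $ω$ induced by $µ_{\mr{std}}$ and $µ_τ$ sends the weight-normalized ray direction $\ob{n_{σ,τ}}$ to $ω(\ob{n_{σ,τ}},\,\cdot\,)$, which is the conormal of $σ$ normalized against the counterclockwise-adjacent cell; the cyclic sum of jumps therefore telescopes to $ω\bigl(\sum_σ a_σ \ob{n_{σ,τ}},\,\cdot\,\bigr)$, which vanishes precisely when balancing holds at $τ$.

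Two steps remain genuine gaps as written. First, the casual reduction ``I may assume $\mcT$ is a complete polyhedral complex of $\mbR^n$'' is not free: completing a locally finite, possibly non-rational complex to a locally finite \emph{complete} complex is a nontrivial assertion, and you use completeness twice --- to have an honest fan of rays and sectors around each ridge in the slice, and to conclude that the constructed $φ$ is piecewise linear with respect to an actual polyhedral complex (closures of complement components of $\Supp D$ need not be polyhedra). A safe fix is to run your arrangement-and-integration argument only on convex relatively compact opens, where the complex is finite and the affine hulls of the walls give a finite hyperplane arrangement, and then glue the local solutions exactly as the paper does, using that local solutions differ by affine functions. Second, the claim that ridge loops generate all cycles of the dual graph needs an actual argument (cap a dual-graph loop, viewed as a loop in $\mbR^n$ missing the codimension-$2$ skeleton, with a disk in general position and split it into small ridge loops); this argument also rules out the degenerate configuration where one wall has the same top cell on both sides, which your spanning-tree propagation silently excludes. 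Finally, note that consistency of the constants $c_P$ around a ridge, not only of the linear parts $m_P$, must be checked; it follows by evaluating the telescoping continuity conditions at a common point of $τ$, since every wall around $τ$ contains $τ$. None of these gaps is fatal, but all three need to be filled before your argument is a proof.
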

\begin{proof}
This is entirely due to Lagerberg, cf. \cite{Lag}*{Prop. 5.3}, we merely give the straightforward reduction to his results. Let $U_1\subset U_2\subset \ldots$ be a covering of $\mbR^n$ by convex relatively compact opens. By definition, $D$ is a \emph{locally finite} sum of currents $m\cdot [σ,µ]$ with $m\in \mbR$. So for each $i\geq 1$, there is a finite linear combination $H_i$ of weighted hyperplanes such that $(D+H_i)\vert_{U_i}$ is \emph{positive} in the sense that all its coefficients are $\geq 0$. By \cite{Lag}*{Prop. 2.4 and Prop. 2.6}, there is then a convex function $φ'_i$ on $U_i$ such that $(D + H_i)\vert_{U_i} = d'd''φ'_i$. The lemma is easily seen to hold for hyperplanes and hence the $H_i$, so we obtain for each $i$ the existence of a convex function $φ_i$ with $D\vert_{U_i} = d'd''φ_i$. Then $φ_i$ is necessarily piecewise linear, cf. \cite{Lag}*{Proof of Prop. 5.3}. A piecewise linear function $φ$ is affine linear if and only if $d'd''φ = 0$, so the $φ_i$ are determined up to addition of affine linear functions. They may then be chosen compatibly, i.e. such that they satisfy $φ_{i+1}\vert_{U_i} = φ_i$, proving the lemma.
\end{proof}
For affine linear $φ$ and every current $T$ we have by \eqref{eq:Leibniz_current} the relation
\begin{equation}\label{eq:trivial_div_identity}
d'd''(φT) = φ\wedge d'd''T - d''φ\wedge d'T + d'φ\wedge d''T.
\end{equation}

\begin{defn}\label{def:CD_intersection}
Let $D$ be a divisor and $T$ a $δ$-form. Choose a piecewise linear function $φ$ with $D = d'd''φ$ as in Lem. \ref{lem:divisors_are_Cartier} and define
\begin{equation}\label{eq:CD_intersection}
D\cdot T := d'd''(φ\wedge T) - φ \wedge d'd''T + d''φ\wedge d'T - d'φ\wedge d''T.
\end{equation}
The definition does not depend on the choice of $φ$ by \eqref{eq:trivial_div_identity}. The resulting $D\cdot T$ is again a $δ$-form.
\end{defn}
\begin{rmk}
The definition collapses to $D\cdot T = d'd''(φT)$ whenever $d'T = d''T = 0$. This identity is well-known in Bedford--Taylor theory, cf. \cite{BT} and \cite{CLD}*{§5}.
\end{rmk}
\begin{lem}\label{lem:div_inter_simplified}
Let $D$, $T$ and $φ$ be as above. The following two identities hold:
\begin{equation}\label{eq:div_inter_simplified}
\begin{aligned}
D\cdot T & = d'(d''φ\wedge T) + d''φ\wedge d'T\\
& = -\partial'(d''φ\wedge T) - d''φ\wedge \partial'T.
\end{aligned}
\end{equation}
\end{lem}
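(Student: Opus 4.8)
The plan is to derive both identities directly from the definition \eqref{eq:CD_intersection} of $D\cdot T$, using the Leibniz rules for products with piecewise smooth forms recorded in Lemma \ref{lem:product_pws_properties}. The structural point to exploit throughout is that $\varphi$ is piecewise \emph{linear}: this guarantees that $d''\varphi = d''_P\varphi$ is again piecewise smooth of tridegree $(0,1,0)$, and that $d'_Pd''\varphi = 0$, since $\varphi|_\sigma$ is affine on each polyhedron $\sigma$ so its second derivatives vanish.

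First I would establish $D\cdot T = d'(d''\varphi\wedge T) + d''\varphi\wedge d'T$. Applying the $d''$-Leibniz rule of Lemma \ref{lem:product_pws_properties}(3) to the function $\varphi\in B^{0,0,0}$ gives $d''(\varphi\wedge T) = d''\varphi\wedge T + \varphi\wedge d''T$. Applying $d'$ and expanding the term $d'(\varphi\wedge d''T)$ via Lemma \ref{lem:product_pws_properties}(2) (again $\varphi\in B^{0,0,0}$, so $d'(\varphi\wedge d''T) = d'\varphi\wedge d''T + \varphi\wedge d'd''T$) yields $d'd''(\varphi\wedge T) = d'(d''\varphi\wedge T) + d'\varphi\wedge d''T + \varphi\wedge d'd''T$. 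Substituting this into \eqref{eq:CD_intersection}, the summands $\varphi\wedge d'd''T$ and $d'\varphi\wedge d''T$ cancel against the corresponding terms of the definition, leaving exactly $D\cdot T = d'(d''\varphi\wedge T) + d''\varphi\wedge d'T$.

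Then I would deduce the second identity from the first by inserting $d' = d'_P - \partial'$ into both occurrences of $d'$. This gives $D\cdot T = \bigl[d'_P(d''\varphi\wedge T) + d''\varphi\wedge d'_PT\bigr] - \partial'(d''\varphi\wedge T) - d''\varphi\wedge\partial'T$. By the $d'_P$-Leibniz rule of Lemma \ref{lem:product_pws_properties}(1) applied to $d''\varphi$ (whose degree is $1$, producing the sign $(-1)^1 = -1$), one has $d'_P(d''\varphi\wedge T) = d'_Pd''\varphi\wedge T - d''\varphi\wedge d'_PT$, and since $d'_Pd''\varphi = 0$ the bracketed expression vanishes. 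What remains is precisely $D\cdot T = -\partial'(d''\varphi\wedge T) - d''\varphi\wedge\partial'T$.

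The computation is essentially bookkeeping, so I do not anticipate a genuine obstacle; the only points requiring care are verifying that the restricted Leibniz rules of Lemma \ref{lem:product_pws_properties} actually apply and tracking the signs. This is exactly where piecewise linearity of $\varphi$ enters twice: the $d'$-Leibniz rule in part (2) is available because $\varphi$ lies in $B^{\bullet,0,0}$, and the vanishing $d'_Pd''\varphi = 0$ driving the second identity again uses that $\varphi$ is affine on each polyhedron.
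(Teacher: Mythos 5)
Your proof is correct and follows essentially the same route as the paper: it expands $d''(φ\wedge T)$ via Lemma \ref{lem:product_pws_properties}(3) (making explicit the further expansion of $d'(φ\wedge d''T)$ via part (2)), substitutes into \eqref{eq:CD_intersection} to obtain the first identity, and then derives the second by writing $d' = d'_P - \partial'$ and cancelling the polyhedral terms using part (1) together with $d'_Pd''φ = 0$. The sign bookkeeping ($(-1)^{\deg d''φ} = -1$) and the appeals to piecewise linearity of $φ$ are exactly as in the paper's argument.
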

\begin{proof}
Part (3) of Lem. \ref{lem:product_pws_properties} shows that
$$d''(φ\wedge T) = d''φ \wedge T + φ \wedge d''T.$$
Substituting this in \eqref{eq:CD_intersection} immediately leads to the first equality of \eqref{eq:div_inter_simplified}.

Part (1) of Lem. \ref{lem:product_pws_properties} together with the observation $d'_Pd''φ = 0$ implies that
$$d'_P(d''φ\wedge T) = - d''φ \wedge d'_P T.$$
Substituting this in \eqref{eq:div_inter_simplified} gives the second equality.
\end{proof}
We remark that identity \eqref{eq:div_inter_simplified} collapses to the definition of the corner locus \cite{GK}*{Def. 1.10} if $T$ is a tropical cycle.
Also, if $φ$ is affine linear, then $d''φ$ is a smooth form and sign-commutes with $\partial'$ by Lem. \ref{lem:product_pws_properties}. Then \eqref{eq:div_inter_simplified} gives $d'd''φ\cdot T = 0$ as expected. The identity also shows that if $T$ is of tridegree $(p,q,r)$, then $D\cdot T$ is of tridegree $(p,q,r+1)$. Its most important consequence for us, however, is the following simple description of $D\cdot T$.

\begin{lem}\label{lem:div_inter_combinatorial}
Let $φ$ be a piecewise linear function and $T$ a $δ$-form of tridegree $(p,q,r)$. Let further $\mcT$ be a weighted polyhedral complex subordinate to both $φ$ and $T$, say $T = \sum_{σ\in \mcT^r} α_σ\wedge [σ,µ_σ]$. Then
$$d'd''φ\cdot T = \sum_{τ\in \mcT^{r+1}} β_τ\wedge [τ,µ_τ]$$
with
\begin{equation}\label{eq:div_inter_combinatorial}
β_τ = \sum_{τ\subset σ\ \text{a facet}} \frac{\partial(φ-φ_τ)}{\partial n_{σ,τ}} α_σ\vert_τ,
\end{equation}
where $φ_τ$ is any choice of affine linear function with $(φ-φ_τ)\vert_τ$ constant.
\end{lem}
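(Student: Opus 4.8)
The plan is to combine the closed form for $D\cdot T$ from Lem.~\ref{lem:div_inter_simplified} with the combinatorial boundary formula \eqref{eq:boundary_formula_1}, after first normalizing $φ$ near the facet $τ$ under consideration. So I would fix a single $τ\in\mcT^{r+1}$ and reduce to the case where $φ\vert_τ$ is constant. Indeed, for any affine linear $φ_τ$ we have $d'd''φ_τ=0$, hence $D=d'd''φ=d'd''(φ-φ_τ)$, so $D\cdot T$ and in particular its coefficient $β_τ$ are unchanged upon replacing $φ$ by $ψ:=φ-φ_τ$. Since the subordinate complex $\mcT$ and the presentation of $T$ are unaffected by subtracting an affine linear function, it suffices to prove that when $φ\vert_τ$ is constant one has $β_τ=\sum_{τ\subset σ}\frac{\partial φ}{\partial n_{σ,τ}}α_σ\vert_τ$; the general statement follows by applying this to $ψ$. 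The resulting independence of the choice of $φ_τ$ is then automatic, but can also be checked directly: two admissible choices differ by an affine linear $c$ with $c\vert_τ$ constant, and $\sum_{τ\subset σ}\frac{\partial c}{\partial n_{σ,τ}}α_σ\vert_τ=0$ is precisely the balancing condition \eqref{eq:def_balancing_lin_functions} for $T$ at $τ$.

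Next I would invoke the second identity of \eqref{eq:div_inter_simplified}, namely $D\cdot T=-\partial'(d''φ\wedge T)-d''φ\wedge\partial'T$. Writing $\partial'T=\sum_{τ'}β'_{τ'}\wedge[τ',µ_{τ'}]$, the coefficient of $[τ,µ_τ]$ in the second summand is $(d''φ)\vert_τ\wedge β'_τ$, which vanishes because $φ\vert_τ$ constant forces $(d''φ)\vert_τ=0$. Hence $β_τ$ equals $-1$ times the coefficient of $[τ,µ_τ]$ in $\partial'S$, where $S:=d''φ\wedge T=\sum_{σ\in\mcT^r}γ_σ\wedge[σ,µ_σ]$ with $γ_σ=(d''φ)\vert_σ\wedge α_σ$; note that $S$ is a $δ$-form of tridegree $(p,q+1,r)$ with the same subordinate complex $\mcT$ by Ex.~\ref{ex:pws_delta_product}, so \eqref{eq:boundary_formula_1} applies.

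Now I would compute that coefficient. The key simplification is that $γ_σ\vert_τ=(d''φ)\vert_τ\wedge α_σ\vert_τ=0$ for every $σ\supset τ$, again since $(d''φ)\vert_τ=0$. Thus the balanced datum $\sum_{τ\subset σ}γ_σ\vert_τ\tensor n_{σ,τ}$ entering \eqref{eq:boundary_formula_1} is zero, so the first sum there (the $(β_i,v''_i)$ terms) drops out and the $τ$-coefficient of $\partial'S$ equals $-\sum_{τ\subset σ}(γ_σ,n''_{σ,τ})\vert_τ$. The Leibniz rule for the contraction, together with the identity $(d''φ,n''_{σ,τ})=\partial φ/\partial n_{σ,τ}$, then gives
\[
(γ_σ,n''_{σ,τ})\vert_τ=\tfrac{\partial φ}{\partial n_{σ,τ}}\,α_σ\vert_τ-\big((d''φ)\vert_σ\wedge(α_σ,n''_{σ,τ})\big)\vert_τ=\tfrac{\partial φ}{\partial n_{σ,τ}}\,α_σ\vert_τ,
\]
the last term vanishing since $(d''φ)\vert_τ=0$. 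Collecting the signs yields $β_τ=\sum_{τ\subset σ}\frac{\partial φ}{\partial n_{σ,τ}}α_σ\vert_τ$, as required.

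The computation is short; the part needing care is the normalization step and the sign bookkeeping. The main point is that reducing to $φ\vert_τ$ constant is exactly what simultaneously kills the contribution $d''φ\wedge\partial'T$ and the ``interior'' first sum of \eqref{eq:boundary_formula_1}, collapsing the general boundary formula to the stated clean expression; without this normalization one would instead have to carry the balanced datum $\sum_σ α_σ\vert_τ\tensor n_{σ,τ}$ through the calculation. I would finally confirm the routine edge case that polyhedra $σ$ on which $φ$ is constant contribute zero on both sides, since there $γ_σ=0$ and $\partial φ/\partial n_{σ,τ}=0$.
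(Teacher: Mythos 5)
Your argument is correct, and its skeleton is the same as the paper's: fix $τ$, replace $φ$ by $φ-φ_τ$ (harmless since $d'd''φ_τ=0$ and $\mcT$ stays subordinate), and use that $d''(φ-φ_τ)\vert_τ=0$ kills the term $d''(φ-φ_τ)\wedge\partial'T$ in the second identity of \eqref{eq:div_inter_simplified}, so that $β_τ$ equals minus the $τ$-coefficient of $\partial'\big(d''(φ-φ_τ)\wedge T\big)$. You diverge at the final computation. The paper evaluates this coefficient by a literal application of \eqref{eq:boundary_formula_3} with $z_σ=(φ-φ_τ)\vert_σ$, which requires these restrictions to be non-constant on every $σ\supset τ$; it therefore treats that case first and deduces the general case from the observation that the right-hand side of \eqref{eq:div_inter_combinatorial} is a priori independent of $φ_τ$ by the balancing condition \eqref{eq:def_balancing_lin_functions}. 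You instead apply \eqref{eq:boundary_formula_1} to $S=d''(φ-φ_τ)\wedge T$ and note that its balanced datum at $τ$ vanishes term by term, since each coefficient $γ_σ=d''(φ-φ_τ)\vert_σ\wedge α_σ$ restricts to zero on $τ$; hence the sum $\sum_i(β_i,v''_i)$ in \eqref{eq:boundary_formula_1} may be taken empty (legitimate, because $(β,v)\mapsto(β,v'')$ is bilinear, so that sum depends only on the element of $A(τ)\tensor_{\mbR}N_τ$, which is zero), leaving $-\sum_σ(γ_σ,n''_{σ,τ})\vert_τ$; the contraction Leibniz rule together with $(d''(φ-φ_τ),n''_{σ,τ})=\partial(φ-φ_τ)/\partial n_{σ,τ}$ then gives the claim, and your signs check out. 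What your route buys is uniformity: no non-constancy hypothesis and no separate genericity step, since polyhedra with $(φ-φ_τ)\vert_σ$ constant contribute zero on both sides automatically. What the paper's route buys is brevity at this spot, \eqref{eq:boundary_formula_3} having been set up precisely for this application.
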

\begin{proof}
It is clear that $\mcT$ is also subordinate to $d'd''φ\cdot T$, our task is merely to determine the $β_τ$. They may be computed locally near every inner point of any given $τ$. Having some $τ$ fixed, we may replace $φ$ by $φ-φ_τ$ because $d'd''φ_τ = 0$. Then $d''(φ-φ_τ)\vert_τ = 0$, so the term $d''(φ-φ_τ) \wedge \partial' T$ in \eqref{eq:div_inter_simplified} vanishes and we are left to find the coefficient of $τ$ in $-\partial'(d''(φ-φ_τ)\wedge T)$.

Assume first that $(φ-φ_τ)\vert_σ$ is non-constant for every $σ\in \mcT^r$ containing $τ$. Then we can put $z_σ = (φ-φ_τ)\vert_σ$ to obtain \eqref{eq:div_inter_combinatorial} from a literal application of \eqref{eq:boundary_formula_3}.

The general case follows since the right hand side of \eqref{eq:div_inter_combinatorial} is a priori independent of the choice $φ_τ$ by the balancing condition \eqref{eq:def_balancing_lin_functions}.
\end{proof}

From here on, many ideas belong to Allermann--Rau \cite{AR} and we merely extend them to $δ$-forms. We will provide references to their paper for comparison.

\begin{lem}[compare \cite{AR}*{Prop. 6.7}]\label{lem:div_inter_commutative}
Given divisors $D_1$, $D_2$ and a $δ$-form $T$,
$$D_1\cdot (D_2\cdot T) = D_2\cdot (D_1\cdot T).$$
\end{lem}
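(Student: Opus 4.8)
The plan is to prove commutativity by a direct local computation in the spirit of \cite{AR}*{Prop. 6.7}, carried through with the smooth coefficients and weights. First I would fix a weighted polyhedral complex $\mcT$ that is subordinate to $T$ and to piecewise linear functions $φ_1,φ_2$ with $D_i = d'd''φ_i$ (Lem. \ref{lem:divisors_are_Cartier}), writing $T = \sum_{σ\in\mcT^r}α_σ\wedge[σ,µ_σ]$. By Lem. \ref{lem:div_inter_combinatorial} both $D_1\cdot(D_2\cdot T)$ and $D_2\cdot(D_1\cdot T)$ are subordinate to $\mcT$ and supported on the codimension-$(r+2)$ skeleton, so it suffices to compare the coefficient of each $ρ\in\mcT^{r+2}$. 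Since $d'd''$ annihilates affine functions, I may replace each $φ_k$ by $φ_k$ minus an affine function agreeing with it on $ρ$, i.e. assume $φ_k\vert_ρ = 0$; then all directional derivatives $\partial φ_k/\partial n_{τ,ρ}$ below are unambiguous because $dφ_k$ vanishes on $N_ρ$.

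Applying \eqref{eq:div_inter_combinatorial} twice and subtracting, the difference of the two coefficients at $ρ$ becomes a sum over facet chains $ρ\subset τ\subset σ$ (with $τ\in\mcT^{r+1}$, $σ\in\mcT^r$) of $\big(s_1(τ)\,t_2(σ,τ)-s_2(τ)\,t_1(σ,τ)\big)\,α_σ\vert_ρ$, where $s_k(τ) = \partial φ_k/\partial n_{τ,ρ}$ and $t_k(σ,τ) = \partial(φ_k-φ_{k,τ})/\partial n_{σ,τ}$ is the codimension-one bending term of \eqref{eq:div_inter_combinatorial}. The delicate point is that $t_k(σ,τ)$ depends on the auxiliary $φ_{k,τ}$ and on $n_{σ,τ}$ modulo $N_τ$, so \emph{individual} chain terms are not well defined; only the inner sums $\sum_{σ\supset τ}t_k(σ,τ)\,α_σ\vert_τ$ are, by balancing of $T$ at $τ$. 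I would therefore split $t_k(σ,τ) = u_k(σ,τ) - ℓ_k^τ(n_{σ,τ})$, where $u_k(σ,τ) = \partial φ_k/\partial n_{σ,τ}$ is computed from the linear function $φ_k\vert_σ$ (hence well defined) and $ℓ_k^τ$ is any linear functional extending $dφ_k\vert_{N_τ}$, and then handle the two parts separately after suitable regrouping.

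For the $u$-part I would regroup by $σ$. The geometric input is that for a fixed codimension-two face $ρ\subset σ$ there are exactly two facets $τ_1,τ_2$ with $ρ\subset τ_i\subset σ$ (the diamond property of the face lattice), and that in the transverse plane $W = N_σ/N_ρ$ the images $a_i = \bar n_{τ_i,ρ}$, $b_i = \bar n_{σ,τ_i}$ satisfy $a_i\wedge b_i = ε_i\,ν$ for the induced weight $ν$ on $W$, with $ε_i=\pm1$, by the weight relation \eqref{eq:prod_of_weight}. Since the two inward normals $b_1,b_2$ of a pointed two-dimensional cone lie on opposite sides of the respective edges, one has $ε_1+ε_2 = 0$. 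Writing $λ_k$ for the linear functional on $W$ induced by $φ_k\vert_σ$, the $σ$-summand for $τ_i$ equals $λ_1(a_i)λ_2(b_i)-λ_2(a_i)λ_1(b_i) = \langle λ_1\wedge λ_2,\ a_i\wedge b_i\rangle = ε_i\,\langle λ_1\wedge λ_2,\ ν\rangle$, so the two contributions of each $σ$ cancel and the $u$-part vanishes.

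For the $ℓ$-part I would regroup by $τ$ and invoke the balancing of $T$ at $τ$ \eqref{eq:def_balancing}: the element $\sum_{σ\supset τ}α_σ\vert_τ\otimes n_{σ,τ}$ lies in $A(τ)\otimes N_τ$, so applying the functional $ℓ_k^τ$ replaces each $n_{σ,τ}$ by a vector in $N_τ$ and renders $\sum_σ ℓ_k^τ(n_{σ,τ})α_σ\vert_ρ$ well defined, writing it as $\sum_j (\partial φ_k/\partial v_j)\,β_j\vert_ρ$ with $v_j\in N_τ$. As $φ_1\vert_τ$ and $φ_2\vert_τ$ are both linear and vanish on $ρ$, each slope $\partial φ_k/\partial v_j$ equals $s_k(τ)$ times the single $N_τ/N_ρ$-coordinate of $v_j$; hence the two terms $s_1(τ)\,\partial φ_2/\partial v_j$ and $s_2(τ)\,\partial φ_1/\partial v_j$ coincide and cancel in the difference, so the $ℓ$-part vanishes as well. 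Combining, the coefficient at every $ρ$ agrees, which is the claim. I expect the main obstacle to be exactly the bookkeeping in these last two steps: justifying the splitting $t_k = u_k - ℓ_k^τ$ and the regroupings even though the individual chain terms are ill defined, and pinning down the orientation signs $ε_i$ from \eqref{eq:prod_of_weight} precisely enough that the $u$-part truly cancels.
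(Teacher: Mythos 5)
Your proof is correct, and it shares the paper's overall reduction: fix $\mcT$ subordinate to $T$, $φ_1$, $φ_2$, normalize $φ_k\vert_ρ=0$, and compare coefficients at each $ρ\in\mcT^{r+2}$ by applying Lem.~\ref{lem:div_inter_combinatorial} twice, so that everything rests on a cancellation over facet chains $ρ\subset τ\subset σ$. Where you genuinely diverge is in how that cancellation is produced. The paper makes a coherent choice of the auxiliary affine functions: it fixes one affine $z$ with $z\vert_ρ=0$ and $z\vert_τ\neq 0$ for all $τ\supset ρ$, takes $φ_{2,τ}=(y_τ/λ_τ)z$, and uses the relation $n_{σ,τ}=χ(σ)n_{τ',ρ}$ (with $µ_σ=χ(σ)\,n_{τ,ρ}\wedge n_{τ',ρ}\wedge µ_ρ$, $χ(σ)>0$ serving both facets) to obtain the iterated coefficient in closed form,
$$γ_ρ=\sum_{\{τ,τ'\}}χ(σ)\big(x_τy_{τ'}+x_{τ'}y_τ-x_τy_τλ_{τ'}/λ_τ-x_{τ'}y_{τ'}λ_τ/λ_{τ'}\big)\,α_σ\vert_ρ,$$
which is manifestly symmetric in $φ_1\leftrightarrow φ_2$; notably, the balancing condition is never invoked, since the coherent choice of $φ_{k,τ}$ makes every individual chain term well defined. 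You instead leave the functionals $ℓ_k^τ$ arbitrary, antisymmetrize first, and split $t_k=u_k-ℓ_k^τ$: the $u$-part cancels $σ$-by-$σ$ through the diamond property and the orientation identity $ε_1+ε_2=0$ (this is exactly the geometric fact the paper encodes in the single constant $χ(σ)$ working for both facets of $σ$ through $ρ$), while the $ℓ$-part cancels $τ$-by-$τ$ by explicit appeal to balancing \eqref{eq:def_balancing} combined with the one-dimensionality of $N_τ/N_ρ$. Both mechanisms are sound, and your checks of the delicate points (well-definedness only of the inner sums, the sign of $ε_i$ from \eqref{eq:prod_of_weight}, restriction to $ρ$ of the balancing relation) are the right ones. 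The trade-off: your argument needs one more input (balancing at the intermediate faces, required precisely because your $ℓ_k^τ$ are not chosen coherently), but in return it cleanly separates the two sources of cancellation — a transverse-plane orientation count and the balancing of $T$ — which the paper's symmetric closed formula fuses into a single algebraic identity.
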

\begin{proof}
Let $φ_i$ be a piecewise linear function with $D_i = d'd''φ_i$. Assume $T$ of tridegree $(p,q,r)$ and let $\mcT$ be a weighted polyhedral complex that is subordinate to $φ_1$, $φ_2$ and $T$; write $T = \sum_{σ\in \mcT^r} α_σ\wedge [σ,µ_σ]$.
Fix some $ρ\in \mcT^{r+2}$ and assume both $φ_1\vert_ρ$ and $φ_2\vert_ρ$ to vanish. Each $σ\in \mcT^r$ with $ρ\subset σ$ has precisely two facets $τ,τ'$ containing $ρ$ and we write $σ = τ + τ'$ if this relation holds. Define a constant $χ(σ)$ through $µ_σ = χ(σ)n_{τ,ρ}\wedge n_{τ',ρ}\wedge µ_ρ$ in this case. In other words, one may pick $n_{σ,τ} = χ(σ)n_{τ',ρ}$ whenever $σ = τ + τ'$. Pick an auxiliary affine linear function $z$ with $z\vert_ρ = 0$ and $z\vert_τ\neq 0$ for all $ρ\subset τ\in \mcT^{r+1}$. Define the constants
$$x_τ := \frac{\partial φ_1\vert_τ}{\partial n_{τ,ρ}},\ \ \ y_τ := \frac{\partial φ_2\vert_τ}{\partial n_{τ,ρ}},\ \ \ λ_τ := \frac{\partial z}{\partial n_{τ,ρ}},\ \ \ τ\in \mcT^{r+1},\ ρ\subset τ.$$
Then $φ_2 - (y_τ/λ_τ)z$ vanishes on $τ$ and may be used in \eqref{eq:div_inter_combinatorial} to compute the $τ$-contribution $β_τ\wedge [τ,µ_τ]$ to $D_2\cdot T$,
\begin{align}
β_τ &= \sum_{τ\subset σ\ \text{a facet}} \frac{\partial(φ_2 - (y_τ/λ_τ)z)}{\partial n_{σ,τ}} α_σ\vert_τ\\
	&= \sum_{τ\subset σ\ \text{a facet}} χ(σ)(y_{τ'} - y_τλ_{τ'}/λ_τ) α_σ\vert_τ.
\end{align}
The $ρ$-contribution $γ_ρ\wedge [ρ,µ_ρ]$ to $D_1\cdot (D_2\cdot T)$ is then, again using \eqref{eq:div_inter_combinatorial},
\begin{align}
γ_ρ &= \sum_{(τ,τ'),\ σ = τ+τ'\in \mcT^r} χ(σ)(x_τ(y_{τ'} - y_τλ_{τ'}/λ_τ))α_σ\vert_ρ\\
	&= \sum_{\{τ,τ'\},\ σ = τ+τ'\in \mcT^r} χ(σ)(x_τy_{τ'} + x_{τ'}y_τ - x_τ y_τλ_{τ'}/λ_τ - x_{τ'}y_{τ'}λ_τ/λ_{τ'})α_σ\vert_ρ.
\end{align}
The last expression is symmetric with respect to exchange of $x$ and $y$, proving the lemma.
\end{proof}

\begin{lem}\label{lem:div_inter_proj_formula}
Let $f\colon \mbR^n\to \mbR^m$ be a surjective affine linear map, $T$ a $δ$-form on $\mbR^n$ with compact support over $\mbR^m$ and $D$ a divisor on $\mbR^m$. Then the projection formula holds,
$$D\cdot f_*T = f_*(f^*D\cdot T).$$
\end{lem}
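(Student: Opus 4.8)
The plan is to reduce both sides to the simplified expression for divisor intersection in Lemma \ref{lem:div_inter_simplified} and to a single projection formula for products with pulled-back forms. First I would choose, by Lemma \ref{lem:divisors_are_Cartier}, a piecewise linear function $\varphi$ on $\mbR^m$ with $D = d'd''\varphi$, and set $\psi := \varphi\circ f$. Since $f$ is affine linear and $\varphi$ piecewise linear, $\psi$ is again piecewise linear, and because $f^*$ commutes with $d'$ and $d''$ (end of §\ref{ss:functoriality}) one has $f^*D = d'd''\psi$ with $d''\psi = f^*(d''\varphi)$. Note that $f^*D$ is indeed a divisor, so $f^*D\cdot T$ is defined.

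Using the first identity in \eqref{eq:div_inter_simplified} for both products, I would write
\begin{align*}
D\cdot f_*T &= d'(d''\varphi\wedge f_*T) + d''\varphi\wedge d'(f_*T),\\
f^*D\cdot T &= d'(d''\psi\wedge T) + d''\psi\wedge d'T.
\end{align*}
Applying $f_*$ to the second line and using that $f_*$ commutes with $d'$ (§\ref{ss:functoriality}) turns it into $d'\bigl(f_*(d''\psi\wedge T)\bigr) + f_*(d''\psi\wedge d'T)$. Comparing with the first line, and recalling $f_*(d'T)=d'(f_*T)$, the statement reduces to the single claim that
$$f_*\bigl(f^*(d''\varphi)\wedge S\bigr) = d''\varphi\wedge f_*S$$
for $S\in\{T,\,d'T\}$; here $d'T$ is again a $\delta$-form with support compact over $\mbR^m$ (Lemma \ref{lem:differential_calculus_delta_forms}), so $f_*$ is defined on it.

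To prove this projection formula I would argue polyhedron by polyhedron. Choosing a weighted polyhedral complex $\mcS$ subordinate to $S$ and fine enough that $d''\varphi$ is smooth on each $f(\sigma)$ (so that $f^*(d''\varphi)$ is smooth on each $\sigma$), I write $S=\sum_\sigma\beta_\sigma\wedge[\sigma,\mu_\sigma]$ and apply the fiber-integration formula \eqref{eq:push_forward_fiber_integration} termwise. The key point is that $f^*(d''\varphi)\vert_\sigma=(f\vert_\sigma)^*\bigl((d''\varphi)\vert_{f(\sigma)}\bigr)$ is pulled back from $f(\sigma)$ and hence carries no component in the kernel direction along which $f_{\delta,*}$ integrates; the projection formula characterizing $f_{\delta,*}$ (§\ref{ss:functoriality}) therefore lets me extract this factor from the fiber integral, giving $f_{\delta,*}\bigl(f^*(d''\varphi)\vert_\sigma\wedge\beta_\sigma\bigr)=(d''\varphi)\vert_{f(\sigma)}\wedge f_{\delta,*}\beta_\sigma$. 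Summing over $\sigma$ recovers $d''\varphi\wedge f_*S$, since the combinatorial product with the piecewise smooth form $d''\varphi$ multiplies the coefficient over each image polyhedron by the same $(d''\varphi)\vert_{f(\sigma)}$.

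The main obstacle is exactly this projection formula for fiber integration and the bookkeeping it requires: one must check that the signs produced by graded-commuting $f^*(d''\varphi)$ past $\beta_\sigma$ cancel against the sign in the current product \eqref{eq:signs_for_currents} (a quick duality computation in the smooth case shows the net sign is trivial, since $f_*$ shifts total degree by the even number $2(m-n)$), and that the subordinate complex can be chosen to be simultaneously compatible with $\psi=\varphi\circ f$, with $S$, and with its image complex under $f$. Once this projection formula is established, the two displayed expressions for $D\cdot f_*T$ and $f_*(f^*D\cdot T)$ agree term by term and the lemma follows.
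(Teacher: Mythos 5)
Your proof is correct and follows essentially the same route as the paper: write $D = d'd''\varphi$, use that $f_*$ commutes with $d'$ and $d''$, and reduce everything to the projection formula for products with pulled-back piecewise smooth forms. The only differences are cosmetic: you start from the two-term identity of Lem.~\ref{lem:div_inter_simplified} rather than the four-term Def.~\ref{def:CD_intersection}, and you spell out via fiber integration (with the even-degree-shift sign check) the piecewise-smooth projection formula that the paper dismisses as obvious.
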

\begin{proof}
Write $D = d'd''φ$ for a piecewise linear function $φ$ as in Lem. \ref{lem:divisors_are_Cartier}. Push-forward commutes with both $d'$ and $d''$ while multiplication with the piecewise smooth forms $φ$, $d'φ$ and $d''φ$ on $\mbR^m$ in the sense of Ex. \ref{ex:pws_delta_product} obviously satisfies the projection formula. The claim now follows directly from Def. \ref{def:CD_intersection}.
\end{proof}

\begin{lem}[compare \cite{AR}*{Lem. 9.4}]\label{lem:div_inter_diagonal}
Let $T$ be a $δ$-form on $\mbR^c\times \mbR^m$ and denote by $D_i = p_{12}^*(d'd''\max\{x_i,y_i\})$ the divisor on $\mbR^c\times \mbR^c\times \mbR^m$ where the $i$-th coordinates of the first two factors agree, $i = 1,\ldots,c$. Let $g(x,z) := (x,x,z)$ be the partial diagonal $\mbR^c\times \mbR^m \to  \mbR^c\times \mbR^c\times \mbR^m$. Then
$$D_1\cdots D_c \cdot (\mbR^c\times T) = g_*T.$$
\end{lem}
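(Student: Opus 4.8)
The plan is to prove the statement by induction on $c$, isolating the real content in the base case $c=1$ and reducing every higher case to it by formal means (commutativity of divisor intersection, the projection formula, and functoriality of exterior products and push-forward).

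For the base case $c=1$, write $T = \sum_σ α_σ\wedge[σ,µ_σ]$ on $\mbR_{y_1}\times\mbR^m_z$, so that $\mbR\times T = \sum_σ α_σ\wedge[\mbR\times σ,\,µ_{\mr{std}}\wedge µ_σ]$ on $\mbR_{x_1}\times\mbR_{y_1}\times\mbR^m_z$. I would choose a polyhedral complex subordinate to both $\mbR\times T$ and $φ:=\max\{x_1,y_1\}$ by cutting each $\mbR\times σ$ along the hyperplane $\{x_1=y_1\}$ into the two halves $\{x_1\geq y_1\}$ and $\{x_1\leq y_1\}$. Their common facet is $τ_σ := (\mbR\times σ)\cap\{x_1=y_1\}$, which is exactly the image $g(σ)$ and has dimension $\dim σ$. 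Applying the combinatorial formula \eqref{eq:div_inter_combinatorial} with $φ_{τ_σ}=x_1$, the factor $\partial(φ-φ_{τ_σ})/\partial n$ vanishes on the half $\{x_1\geq y_1\}$ and equals $\partial(y_1-x_1)/\partial n_{σ,τ_σ}$ on the half $\{x_1\leq y_1\}$, so the coefficient of $[τ_σ,µ_{τ_σ}]$ is $α_σ|_{τ_σ}$ times this single directional derivative.

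The crux is then to verify that, once the (freely chosen) weight on $τ_σ$ is taken to be the push-forward weight $g_*µ_σ$ occurring in $g_*T$, the derivative $\partial(y_1-x_1)/\partial n_{σ,τ_σ}$ equals $1$. This forces one to unwind the normalization of Def. \ref{def:normal_vector}: writing $N_{\mbR\times σ}=\mbR e_{x_1}\oplus N_σ$ and $N_{τ_σ}=dg(N_σ)$, one solves $µ_{\mbR\times σ}=µ_{τ_σ}\wedge n_{σ,τ_σ}$ modulo $N_{τ_σ}$, and the antisymmetry of $\wedge$ shows $n_{σ,τ_σ}\equiv -e_{x_1}$, giving derivative $1$. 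Since $g$ is injective, the fiber integral $g_{δ,*}α_σ$ is just $α_σ$ transported along $g$, matching $α_σ|_{τ_σ}$; hence $D_1\cdot(\mbR\times T)=g_*T$. I expect this sign-and-weight bookkeeping to be the only genuinely delicate point of the proof.

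For the inductive step $c>1$, I would use Lem. \ref{lem:div_inter_commutative} to let $D_c$ act first. Writing $\mbR^c\times T=\mbR_{x_c}\times T'$ with $T'=\mbR^{c-1}_{x_{<c}}\times T$, the base case applied to the pair $(x_c,y_c)$ gives $D_c\cdot(\mbR^c\times T)=h_*T'$, where $h$ is the closed embedding with image $\{x_c=y_c\}$. For $i<c$ the divisor $D_i$ is the pull-back $q^*D_i^{\mr{down}}$ along the surjection $q$ forgetting $x_c$; since $q\circ h=\mr{id}$ and $\Supp(D_i\cdot h_*T')\subseteq\{x_c=y_c\}=\operatorname{im}(h)$, the projection formula (Lem. \ref{lem:div_inter_proj_formula}) together with $q_*h_*=\mr{id}$ yields $D_i\cdot h_*T'=h_*(D_i^{\mr{down}}\cdot T')$ --- this is the one place where I must route the divisor through the surjection $q$ because $h$ itself is injective and admits no projection formula. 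Iterating over $i=1,\ldots,c-1$ reduces the product to $h_*\big(D_1^{\mr{down}}\cdots D_{c-1}^{\mr{down}}\cdot T'\big)$. Finally, regarding $T'=\mbR^{c-1}\times\hat T$ with $\hat T$ the form $T$ viewed on $\mbR^{c-1}\times\mbR^{m+1}$ (absorbing $y_c$), the inductive hypothesis gives $D_1^{\mr{down}}\cdots D_{c-1}^{\mr{down}}\cdot T'=g'_*\hat T$ for the $(c-1)$-fold partial diagonal $g'$, and a direct check that $h\circ g'=g$ yields $D_1\cdots D_c\cdot(\mbR^c\times T)=(h\circ g')_*\hat T=g_*T$, closing the induction.
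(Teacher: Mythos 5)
Your proposal is correct and takes essentially the same route as the paper: the $c=1$ computation via \eqref{eq:div_inter_combinatorial} matches the paper's (which uses $\varphi=\max\{0,x-y\}$ so the surviving derivative comes from the $\{x\geq y\}$ half instead, which is immaterial), the only elided detail being that $\tau_\sigma$ is also a facet of the polyhedra $g(\rho)$ for facet inclusions $\sigma\subset\rho$ in $\mcT$, whose contributions vanish since $\varphi-x_1$ is identically zero on $\{x_1=y_1\}$ --- a point the paper dispatches explicitly. The paper compresses everything else into the single phrase ``by a recursive argument, it is enough to treat the case $c=1$'', and your inductive step --- commuting $D_c$ to the front via Lem. \ref{lem:div_inter_commutative}, applying the base case, and transporting each $D_i$, $i<c$, across $h_*$ by routing it through the surjection $q$ with Lem. \ref{lem:div_inter_proj_formula} (correctly so, since at this stage of the paper the projection formula exists only for surjective maps) --- is a valid filling-in of that recursion.
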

\begin{proof}
By a recursive argument, it is enough to treat the case $c = 1$. Let $\mcT$ be a polyhedral complex on $\mbR \times \mbR^m$ that is subordinate to $T$, say $T = \sum_{σ\in \mcT} α_σ\wedge [σ,µ_σ]$. Assume without loss of generality that $\bigcup_{σ\in \mcT} σ = \mbR\times \mbR^m$ and define, for each $σ$,
$$\wt{σ}_{?} = \{(x,y,z)\in \mbR\times σ\mid x\ ?\ y\},\ \ \ ? \in \{\geq, \leq\}.$$
Let $x$ and $y$ denote the coordinates on the first two factors of $\mbR\times \mbR\times \mbR^m$. A polyhedral complex structure on $\mbR\times \mbR\times \mbR^m$ that is subordinate to both $\mbR\times T$ and $φ = \max\{0,x-y\}$ is then, for example,
$$\mcS = \bigcup_{σ\in \mcT} \{\wt{σ}_{\geq}, g(σ), \wt{σ}_\leq\}.$$
It becomes a weighted complex by endowing $\wt{σ}_{\geq}$ and $\wt{σ}_{\leq}$ with weight $µ_{\mr{std}}\wedge µ_σ$ and $g(σ)$ with $g(µ_σ)$. 
The support of $d'd''φ\cdot (\mbR\times T)$ is contained in $φ$'s locus of non-linearity $\{x = y\} = \bigcup_{σ\in \mcT} g(σ)$. Given a polyhedron $g(σ)\in \mcS$, it is the facet of precisely the polyhedra $\wt{σ}_{\geq}$, $\wt{σ}_{\leq}$ and all $g(ρ)$ such that $σ\subset ρ$ is a facet. Normal vectors in these cases are $(1,0,0)$, $(-1,0,0)$ and $g(n_{ρ,σ})$, respectively. Using that $φ\vert_{\{x=y\}} = 0$, the contribution $β_σ \wedge [g(σ),g(µ_σ)]$ of $g(σ)$ to $d'd''φ\cdot (\mbR\times T)$ is by Lem. \ref{lem:div_inter_combinatorial}
$$β_σ = \left(\frac{\partial φ\vert_{x \geq y}}{\partial (1,0,0)} + \frac{\partial φ\vert_{x \leq y}}{\partial (-1,0,0)}\right) g_*α_σ + \sum_{σ\subset ρ\ \text{a facet}} \frac{\partial φ\vert_{g(ρ)}}{\partial g(n_{ρ,σ})} g_*(α_ρ\vert_σ).$$
Since $φ\vert_{\{x\leq y\}} = 0$, only the first term is non-zero and contributes $g_*α_σ$ as claimed.
\end{proof} 

\begin{defn}[compare \cite{AR}*{Def. 9.3}]\label{def:delta_form_inter_prod}
Let $D_i = d'd''(\max\{x_i,y_i\})$ denote the divisor on $\mbR^n\times \mbR^n$ where the $i$-th coordinates agree.
The \emph{$\wedge$-product of $δ$-forms $S,T\in B(\mbR^n)$} is defined as the $δ$-form
$$S\wedge T := p_{1,*}(D_1 \cdots D_n\cdot (S\times T)).$$
The notational convention (and only possibility) here is that the successive product is evaluated from right to left. We also write $Δ\cdot (S\times T)$ instead of $D_1\cdots D_n \cdot (S\times T)$. Note that we have already seen that the order of the $D_i$ does not matter, but only Cor. \ref{cor:div_inter_properties} below will prove the independence of the choice of $\{D_1,\ldots,D_n\}$ to describe the diagonal.
\end{defn}

\begin{lem}\label{lem:tensor_of_delta_forms_is_intersection}
The following identity holds for all $δ$-forms $S$ and $T$,
$$S\times T = p_1^*S\wedge p_2^*T.$$
\end{lem}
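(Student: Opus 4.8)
The plan is to unwind both sides over the doubled space $(\mbR^n\times\mbR^m)\times(\mbR^n\times\mbR^m)$ and reduce the whole identity to two applications of Lemma~\ref{lem:div_inter_diagonal}. Write $S\in B(\mbR^n)$ in coordinates $x$ and $T\in B(\mbR^m)$ in coordinates $z$, so that $p_1^*S$, $p_2^*T$ and $p_1^*S\wedge p_2^*T$ all live on $\mbR^n\times\mbR^m$. The first thing I would record is the elementary observation that pull-back along a projection is an exterior product with a full space: from the description $f^*(α\wedge[σ,µ]) = f^*α\wedge[f^{-1}σ,ν]$ in §\ref{ss:functoriality}, together with $p_1^{-1}(σ)=σ\times\mbR^m$ and the fact that the weight $ν$ is the product of $µ$ with the natural weight on the kernel, one gets
$$p_1^*S = S\times[\mbR^m,µ_{\mr{std}}],\qquad p_2^*T = [\mbR^n,µ_{\mr{std}}]\times T.$$
Hence, after relabelling the four coordinate blocks $(x,z,x',z')$ of the doubled space, $(p_1^*S)\times(p_2^*T)$ is the exterior product of $S$ (in $x$), $T$ (in $z'$), and the two full spaces $[\mbR^m,µ_{\mr{std}}]$ (in $z$) and $[\mbR^n,µ_{\mr{std}}]$ (in $x'$).

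Next I would expand $p_1^*S\wedge p_2^*T = p_{1,*}\big(D_1\cdots D_{n+m}\cdot((p_1^*S)\times(p_2^*T))\big)$ by Definition~\ref{def:delta_form_inter_prod}, where the first $n$ divisors $D_i^x$ identify $x$ with $x'$ and the last $m$ divisors $D_j^z$ identify $z$ with $z'$. Using commutativity of divisor intersection (Lemma~\ref{lem:div_inter_commutative}) I group the $D^z$ together and intersect with them first. After permuting coordinates so that the full space $[\mbR^m,µ_{\mr{std}}]$ (in $z$) and $T$ (in $z'$) occupy the leading two $\mbR^m$-blocks, the configuration has exactly the shape $\mbR^m\times T'$ of Lemma~\ref{lem:div_inter_diagonal}, with $T'=T\times(\text{spectators in }x,x')$; that lemma collapses $z$ and $z'$ onto their diagonal, producing $T$ there while $S$ and the remaining full space ride along unchanged. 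Feeding the result into the $D^x$ and reordering so that the full space $[\mbR^n,µ_{\mr{std}}]$ now plays the leading role, the same lemma (with $c=n$) collapses $x$ and $x'$ and produces $S$. Altogether $D_1\cdots D_{n+m}\cdot\big((p_1^*S)\times(p_2^*T)\big) = (\mr{id},\mr{id})_*(S\times T)$, i.e. the $δ$-form $S\times T$ pushed onto the full diagonal of $\mbR^{n+m}\times\mbR^{n+m}$.

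Finally, since $p_1\circ(\mr{id},\mr{id}) = \mr{id}$ on $\mbR^{n+m}$ and push-forward is functorial (immediate from \eqref{eq:push_for_currents} and $(f\circ g)^*=g^*\circ f^*$), applying $p_{1,*}$ gives $p_{1,*}(\mr{id},\mr{id})_*(S\times T) = S\times T$, which is the asserted identity.

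The routine-but-delicate part, which I expect to be the main obstacle, is the coordinate bookkeeping needed to match Lemma~\ref{lem:div_inter_diagonal} exactly. One must permute the factors of the doubled space so that the relevant pair of $\mbR^c$-blocks becomes leading with the full space first, and verify that divisor intersection, exterior product and push-forward are all compatible with such a linear coordinate permutation; this compatibility is straightforward from the definitions in §\ref{ss:smooth_forms}--§\ref{ss:functoriality} (in particular $ψ^*(d'd''φ\cdot U)=d'd''(φ\circ ψ)\cdot ψ^*U$ for an isomorphism $ψ$, using that $ψ^*$ commutes with $d'$, $d''$ and with multiplication by piecewise smooth forms), and crucially does \emph{not} require the yet-unproven properties of Theorem~\ref{thm:main}. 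The one genuine subtlety is the tracking of the Koszul signs from the definition of $\boxtimes$ under these permutations; since $S$ and $T$ occupy disjoint coordinate groups, the two applications of Lemma~\ref{lem:div_inter_diagonal} act independently, and the accumulated signs are precisely those already built into $\times$, so they cancel and the identity holds on the nose.
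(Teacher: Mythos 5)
Your proposal is correct and takes essentially the same route as the paper: unwind Definition~\ref{def:delta_form_inter_prod}, recognize $(p_1^*S)\times(p_2^*T)$ as the exterior product of $S$, $T$ and two full spaces, collapse the diagonal divisors via Lemma~\ref{lem:div_inter_diagonal}, and push forward along $p_1$ using that the diagonal composed with $p_1$ is the identity. The only cosmetic difference is that the paper invokes Lemma~\ref{lem:div_inter_diagonal} once with $c=2n$ (implicitly reordering blocks so the two full-space factors lead), whereas you group the divisors by Lemma~\ref{lem:div_inter_commutative} and apply it twice; the coordinate-permutation compatibilities and sign bookkeeping you spell out are exactly the steps the paper silently elides.
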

\begin{proof}
Let $g\colon \mbR^n\times \mbR^n \to (\mbR^n\times \mbR^n)\times (\mbR^n\times \mbR^n)$ be the diagonal.
The first equality in the following is by definition, the second is Lem. \ref{lem:div_inter_diagonal} and the third is the identity $p_{12}\circ g = \mr{id}$.
$$\begin{aligned}
p_1^*S\wedge p_2^*T &= p_{12,*} (Δ\cdot (S\times \mbR^n\times \mbR^n\times T))\\
&= p_{12,*} g_*(S\times T)\\
&= S\times T.
\end{aligned}
$$
\end{proof}

\begin{lem}[compare \cite{AR}*{Lem. 9.7}]\label{lem:div_inter_associat}
Let $S, T$ be $δ$-forms on $\mbR^n$ and $C$ a divisor. Then
$$C\cdot (S\wedge T) = (C\cdot S)\wedge T.$$
\end{lem}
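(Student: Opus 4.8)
The plan is to reduce both sides to a single divisor computation on $\mbR^n\times \mbR^n$, exploiting that $C$, pulled back from the first factor, commutes with the diagonal divisors and factors through the external product. First I would unfold the definition $S\wedge T = p_{1,*}(D_1\cdots D_n\cdot (S\times T))$ from Def. \ref{def:delta_form_inter_prod} and apply the projection formula for divisor intersections, Lem. \ref{lem:div_inter_proj_formula}, to the surjective map $p_1\colon \mbR^n\times \mbR^n\to \mbR^n$. Since the support of $D_1\cdots D_n\cdot (S\times T)$ lies on the diagonal and is therefore proper over $\mbR^n$ under $p_1$ (this is exactly what makes $S\wedge T$ well-defined), this yields
$$C\cdot (S\wedge T) = p_{1,*}\big(p_1^*C\cdot (D_1\cdots D_n\cdot (S\times T))\big).$$
Here $p_1^*C$ is again a divisor, because pull-back preserves tridegree and commutes with $d'$ and $d''$.

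Next I would invoke commutativity of divisor intersections, Lem. \ref{lem:div_inter_commutative}, applied $n$ times, to move $p_1^*C$ past the diagonal divisors:
$$p_1^*C\cdot (D_1\cdots D_n\cdot (S\times T)) = D_1\cdots D_n\cdot (p_1^*C\cdot (S\times T)).$$
The heart of the argument, and the only step requiring a genuine computation, is the external-product identity $p_1^*C\cdot (S\times T) = (C\cdot S)\times T$. I would prove it with the combinatorial formula \eqref{eq:div_inter_combinatorial}. Writing $C = d'd''\psi$, hence $p_1^*C = d'd''(p_1^*\psi)$, and choosing a product polyhedral complex subordinate to $\psi$, $S$ and $T$, the key point is that $p_1^*\psi$ depends only on the first-factor coordinates. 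Thus in \eqref{eq:div_inter_combinatorial} the codimension-one faces $\sigma\times \nu$ coming from a facet $\nu\subset\rho$ in the second factor carry normal vectors of the shape $(0,n_{\rho,\nu})$, along which $p_1^*\psi$ has vanishing derivative, so they contribute nothing; only the faces $\tau\times\rho$ with $\tau\subset\sigma$ a facet in the first factor survive, with normal vector $(n_{\sigma,\tau},0)$ and coefficient $\tfrac{\partial(\psi-\psi_\tau)}{\partial n_{\sigma,\tau}}\,\alpha_\sigma\vert_\tau\wedge \beta_\rho$. Summing over such faces reproduces precisely the coefficients of $(C\cdot S)\times T$.

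Finally, combining the three steps and applying $p_{1,*}$ gives
$$C\cdot (S\wedge T) = p_{1,*}\big(D_1\cdots D_n\cdot ((C\cdot S)\times T)\big) = (C\cdot S)\wedge T$$
by the very definition of the $\wedge$-product. The only real obstacle is the external-product identity of the third step; the surrounding manipulations are formal consequences of Lem. \ref{lem:div_inter_proj_formula} and Lem. \ref{lem:div_inter_commutative}. Within that step, the care needed is purely bookkeeping: choosing the affine function $\psi_\tau$ so that $p_1^*\psi - p_1^*\psi_\tau$ is constant along $\tau\times\rho$, and verifying that the second-factor faces $\sigma\times\nu$ drop out, which is where the independence of $p_1^*\psi$ from the second coordinates is used decisively.
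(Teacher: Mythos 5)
Your proposal is correct and follows essentially the same route as the paper's proof: the projection formula of Lem.~\ref{lem:div_inter_proj_formula} applied to $p_1$, commutativity of divisor intersections (Lem.~\ref{lem:div_inter_commutative}), and the external-product identity $p_1^*C\cdot(S\times T)=(C\cdot S)\times T$ deduced from the combinatorial formula \eqref{eq:div_inter_combinatorial}. The paper merely asserts that last identity as a consequence of Lem.~\ref{lem:div_inter_combinatorial}, whereas you spell out the facet-by-facet verification; the content is the same.
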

\begin{proof}
It follows from Lem. \ref{lem:div_inter_combinatorial} that $p_1^*C \cdot (S\times T) = (C \cdot S)\times T$. Then the claim follows from the commutativity in Lem. \ref{lem:div_inter_commutative} and the projection formula in Lem. \ref{lem:div_inter_proj_formula}:
$$
\begin{aligned}
C\cdot p_{1,*}(D_1\cdots D_n\cdot(S\times T)) &= p_{1,*}(p_1^*C\cdot D_1\cdots D_n\cdot (S\times T))\\
 &= p_{1,*}(D_1\cdots D_n\cdot ((C\cdot S)\times T)).
\end{aligned}
$$
\end{proof}
Successive application of Lem. \ref{lem:div_inter_associat} shows the following corollary.
\begin{cor}[compare \cite{AR}*{Cor. 9.8}]\label{cor:div_inter_properties}
Let $T$ be a $δ$-form on $\mbR^n$ and $C,C_1,\ldots,C_l$ divisors. Then
$$C_1\cdots C_l\cdot T = (C_1\cdots C_l)\wedge T.$$
In particular,
$$C \wedge T = C\cdot T,\ \ \ Δ\wedge (S\times T) = Δ \cdot (S\times T).$$
\end{cor}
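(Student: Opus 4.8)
The plan is to establish the main identity $C_1 \cdots C_l \cdot T = (C_1 \cdots C_l) \wedge T$ by induction on $l$, with Lem. \ref{lem:div_inter_associat} as the only real input, and then to read off the two ``in particular'' assertions as degenerate cases. Throughout I interpret the $δ$-form $C_1 \cdots C_l$ as the iterated divisor intersection $C_1 \cdot (C_2 \cdot (\cdots (C_l \cdot \mathbf 1)))$ of the fundamental class $\mathbf 1 := [\mbR^n, µ_{\mr{std}}] \in B^{0,0,0}$; by Def. \ref{def:CD_intersection} every intermediate term is again a $δ$-form, so all the products are legitimate.

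For the base case I would record the identity $\mathbf 1 \wedge T = T$: under the description of Lem. \ref{lem:pws_as_delta_forms} the class $\mathbf 1$ is the constant piecewise smooth function $1$, so the claim is immediate from definition \eqref{eq:def_pws_product} of the product with a piecewise smooth form. For the inductive step I would apply the induction hypothesis to the inner $l-1$ factors to get
$$C_1 \cdots C_l \cdot T = C_1 \cdot \big((C_2 \cdots C_l) \wedge T\big),$$
and then invoke Lem. \ref{lem:div_inter_associat} with $S = C_2 \cdots C_l$ to pull $C_1$ into the $\wedge$-product:
$$C_1 \cdot \big((C_2 \cdots C_l) \wedge T\big) = \big(C_1 \cdot (C_2 \cdots C_l)\big) \wedge T = (C_1 \cdots C_l) \wedge T,$$
where the last equality is just the definition of the iterated product.

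The special cases then follow formally. Taking $l = 1$ and using $C \cdot \mathbf 1 = C$ — which holds because, for $T = \mathbf 1$, the last three terms of Def. \ref{def:CD_intersection} vanish and the first reduces to $d'd''φ = C$ — yields $C \wedge T = C \cdot T$. For the diagonal I would first identify $\Delta = D_1 \cdots D_n \cdot \mathbf 1$ on $\mbR^n \times \mbR^n$; this is Lem. \ref{lem:div_inter_diagonal} in the degenerate case $m = 0$, $c = n$, $T = [\mbR^n, µ_{\mr{std}}]$, where $g$ is the diagonal embedding and $g_* T = \Delta$. Applying the main identity on $\mbR^n \times \mbR^n$ with the $n$ divisors $D_i$ and the $δ$-form $S \times T$ then yields
$$\Delta \cdot (S \times T) = D_1 \cdots D_n \cdot (S \times T) = (D_1 \cdots D_n) \wedge (S \times T) = \Delta \wedge (S \times T),$$
the first equality being the notational convention of Def. \ref{def:delta_form_inter_prod}.

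I expect no genuine obstacle: all the analytic and combinatorial content sits in Lem. \ref{lem:div_inter_associat} (which in turn rests on Lem. \ref{lem:div_inter_combinatorial}, \ref{lem:div_inter_commutative} and \ref{lem:div_inter_proj_formula}), so the corollary is purely formal. The only care needed is with the three bookkeeping identities $\mathbf 1 \wedge T = T$, $C \cdot \mathbf 1 = C$ and $\Delta = D_1 \cdots D_n \cdot \mathbf 1$, each a one-line check against the relevant definition.
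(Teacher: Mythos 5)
Your overall route is exactly the paper's: the paper's entire proof is ``successive application of Lem.~\ref{lem:div_inter_associat}'', and your induction --- reading $(C_1\cdots C_l)$ as $C_1\cdots C_l\cdot \mathbf 1$ with $\mathbf 1 = [\mbR^n,\mu_{\mr{std}}]$, plus the bookkeeping identities $C\cdot\mathbf 1 = C$ and $\Delta = D_1\cdots D_n\cdot\mathbf 1$ --- is precisely what that one-liner suppresses. Your verifications of $C\cdot\mathbf 1 = C$ (from Def.~\ref{def:CD_intersection}, using $d'\mathbf 1 = d''\mathbf 1 = 0$) and of $\Delta = D_1\cdots D_n\cdot\mathbf 1$ (Lem.~\ref{lem:div_inter_diagonal} with $c=n$, $m=0$) are correct.

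The one genuine flaw is the justification of your base case $\mathbf 1\wedge T = T$. At this point of the development, $\wedge$ means the product of Def.~\ref{def:delta_form_inter_prod}, so $\mathbf 1\wedge T = p_{1,*}\bigl(D_1\cdots D_n\cdot(\mathbf 1\times T)\bigr)$; the statement that this product ``extends definition \eqref{eq:def_pws_product}'' is one of the assertions of Thm.~\ref{thm:main}, proved only in step (d) of that proof, which comes \emph{after} Cor.~\ref{cor:div_inter_properties} (the corollary itself feeds into that proof via \eqref{eq:inter_diagonal} and, per Def.~\ref{def:delta_form_inter_prod}, is what makes the notation $\Delta\wedge$ legitimate at all). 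So you cannot declare $\mathbf 1\wedge T = T$ ``immediate from \eqref{eq:def_pws_product}'': the diagonal-intersection product and the piecewise smooth product have not yet been identified, and that identification is a claim to be proved, not a definition. The repair is one line and uses only what you already invoke elsewhere: by Lem.~\ref{lem:div_inter_diagonal} in the same degenerate case $c=n$, $m=0$, one has $D_1\cdots D_n\cdot(\mbR^n\times T) = g_*T$ with $g$ the diagonal embedding, whence $\mathbf 1\wedge T = p_{1,*}g_*T = T$. (Equivalently, start the induction at $l=1$ and get $C\wedge T = C\cdot T$ from this together with Lem.~\ref{lem:div_inter_associat} at $S=\mathbf 1$.) With that substitution your argument is complete and coincides with the paper's.
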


\begin{proof}[Proof of Thm. \ref{thm:main}.] So far, Def. \ref{def:delta_form_inter_prod} provides a well-defined bilinear map $\wedge\colon  B\times B \to  B$. It is left to verify all the properties stated in Thm. \ref{thm:main}. 

(a) The $\wedge$-product is clearly trihomogeneous in the sense that it restricts to maps $B^{p,q,r}\times B^{s,t,u}\to B^{p+s,q+t,r+u}$. It is graded-commutative in the sense $S\wedge T = (-1)^{\deg S \deg T} T\wedge S$ for homogeneous $S$ and $T$ because
$$s^*(S\times T) = (-1)^{\deg S\deg T} T\times S,$$
where $s\colon \mbR^n\times\mbR^n\to \mbR^n\times \mbR^n$ is the map that switches the two factors.

(b) We claim that the $\wedge$-product satisfies the projection formula
$$S\wedge f_*T = f_*(f^*S\wedge T)$$
for every \emph{surjective} linear map $f\colon \mbR^n\to \mbR^m$. To check this, we may assume $n = m + c$ and, after a change of coordinates, $f\colon \mbR^m\times \mbR^c \to  \mbR^m$ being just the projection. Then simply $f^*S = S\times \mbR^c$. Recall that $D_i = d'd''\max\{x_i,y_i\}$ on $\mbR^n\times \mbR^n$ and $Δ = D_1\cdots D_n$. Write $Δ^m = D_1\cdots D_m$ and $Δ^c = D_{m+1}\cdots D_n$. Then the following equalities hold, as will be explained below.
\begin{equation}
\begin{aligned}
f_*(f^*S\wedge T) &= p_{1,*}(f,f)_*(Δ^m\cdot Δ^c \cdot (f^*S\times T))\\
&= p_{1,*} \big(δ^m \cdot (f,f)_*(Δ^c\cdot (S\times \mbR^c\times T))\big)\\
&= p_{1,*} \big(δ^m \cdot (f,f)_*(S \times (Δ^c \cdot (\mbR^c\times T)))\big)\\
&= p_{1,*} \big(δ^m \cdot (f,f)_*(S \times g_*T)\big)\\
&= S\wedge f_*T.
\end{aligned}
\end{equation}
The first equality is the definition of the left hand side combined with the identity $f\circ p_1 = p_1 \circ (f,f)$. The second follows from the projection formula for divisor intersection, Lem. \ref{lem:div_inter_proj_formula}, applied to $Δ^m = (f,f)^*δ^m$, where $δ^m\subset \mbR^m\times \mbR^m$ denotes the diagonal. The third equality is the observation $p_2^*C \cdot (X \times Y) = X \times (p_2^* C \cdot Y)$ for any divisor $C$ and $δ$-forms $X,Y$, applied successively to the divisor intersection $Δ^c$. The map $g$ in the next line is the partial diagonal
$$g\colon \mbR^m\times \mbR^c \longrightarrow  \mbR^c \times \mbR^m\times \mbR^c,\ (x,y)\longmapsto (y, x, y)$$
and the identification $Δ^c \cdot (\mbR^c\times T) = g_*T$ is Lem. \ref{lem:div_inter_diagonal}. The final equality then is the observation $(f,f)_*(S \times g_*T) = S\times f_*T$ which follows from the identity $(f,f)\circ (\mr{id}_{\mbR^m},g) = (\mr{id}_{\mbR^m}, f)$.

(c) The next claim is that the $\wedge$-product is associative, $S\wedge (T\wedge U) = (S\wedge T)\wedge U$. Indeed, applying the projection formula (b) repeatedly,	one obtains
$$S\wedge (T\wedge U) = p_{1,*}\big(Δ \cdot (S\times \mbR^n\times T\times U)\big)$$
where the intersection takes place on $(\mbR^n)^4$ and where $Δ = \prod_{i = 1}^n p_{12}^*D_i \cdot p_{23}^*D_i\cdot p_{34}^*D_i$ is the diagonal $\mbR^n\subset (\mbR^n)^4$. In exactly the same way,
$$(S\wedge T)\wedge U = p_{1,*}\big(Δ\cdot (S\times T\times \mbR^n\times U)\big).$$
The two expressions are seen to be equal by switching the middle factors as in Step (a).

(d) Next, we claim that $α\wedge T = αT$ for every piecewise smooth $α$. This follows from Lem. \ref{lem:div_inter_diagonal} and the fact that multiplication by piecewise smooth forms commutes with divisor intersection. The latter is immediate from Lem. \ref{lem:div_inter_combinatorial}.

(e) Lem. \ref{lem:tensor_of_delta_forms_is_intersection} furthermore showed that $S\times T = p_1^*S \wedge p_2^*T$, so the constructed $\wedge$-product is computed by intersection with the diagonal, cf. \eqref{eq:inter_diagonal}.

(f) We turn to the Leibniz rule. Let $C$ be a divisor and $T$ a $δ$-form. Our first step is to prove the identity
\begin{equation}\label{eq:Leibniz_divisor_inter}
d(C\wedge T) = C \wedge dT,\ \ \ d\in \{d',d''\}.
\end{equation}
We write $d = d_P - \partial$, with $d_P \in \{d_P', d_P''\}$ and $\partial \in \{\partial',\partial''\}$ suitable, and verify \eqref{eq:Leibniz_divisor_inter} for $d_P$ and $\partial$ separately.

Applying \eqref{eq:div_inter_combinatorial}, the identity $d_P(C\wedge T) = C \wedge d_PT$ is immediate. Writing $D = d'd''φ$ for a piecewise linear function $φ$ as in Lem. \ref{lem:divisors_are_Cartier} and using \eqref{eq:div_inter_simplified} twice, we have
$$\partial'(C\wedge T) = - \partial'\partial'(d''φ\wedge T) - \partial'(d''φ\wedge \partial'T) = C \wedge \partial'T$$
because $(\partial')^2 = 0$ by Lem. \ref{lem:differential_calculus_delta_forms}.
Finally, applying Lem. \ref{lem:product_pws_properties} (3) and the rule $\partial'\partial'' = - \partial''\partial'$, we also obtain
$$\begin{aligned}
\partial''(C\wedge T) & = - \partial''\partial'(d''φ \wedge T) - \partial''(d''φ\wedge \partial'T) \\
&= - \partial'(d''φ \wedge \partial''T) - (d''φ\wedge \partial' \partial'' T) = C\wedge \partial''T.
\end{aligned}$$
This finishes the proof of \eqref{eq:Leibniz_divisor_inter}. Successive application of the divisor case now yields
$$d(\Delta \wedge (S\times T)) = \Delta \wedge d(S\times T).$$
The Leibniz rule \eqref{eq:tensor_product_current_derivative_d} for exterior products, coupled with Def. \ref{def:delta_form_inter_prod}, completes the proof of the Leibniz rule for $d$ in general. Separating by tridegree provides the Leibniz rules for the other differential operators.

(g) The identity $f^*(S\wedge T) = f^*S\wedge f^*T$ only uses the fact $p_1^*D\cdot (S \times T) = (D\cdot S)\times T$ for divisor intersection. Namely assume $f\colon \mbR^m\times \mbR^c \to \mbR^m$ to be the projection and write $p_{12}\colon \mbR^m\times \mbR^c\times \mbR^m\times \mbR^c \to  \mbR^m\times \mbR^c$ for the projection to the first two factors. Then, in the terminology of Step (b),
\begin{equation}
\begin{aligned}
f^*S\wedge f^*T &= p_{12,*}(Δ^m\cdot Δ^c \cdot (S \times \mbR^c\times T \times \mbR^c))\\
&= p_{12,*} (Δ^m \cdot (S\times T)) \times Δ^c\\
&= (S\wedge T) \times \mbR^c = f^*(S\wedge T).
\end{aligned}
\end{equation}

(h) Finally, the tropical intersection products of Allermann--Rau \cite{AR}, its extension to smoothly weighted rational polyhedra in \cite{GK}*{Rmk. 1.4}, and the intersection product of Esterov \cite{Esterov} for polynomially weighted (possibly non-rational) polyhedra can all be expressed in terms of divisor intersection and restriction to the diagonal. In these two specific cases, they coincide with our definition. So any two of the mentioned products coincide whenever both are defined.
\end{proof}

\subsection{Fan Displacement Rule}
\label{ss:fan_displacement}

Two linear subspaces $N_1,N_2\subseteq \mbR^n$ are said to intersect \emph{transversally} if $N_1 + N_2 = \mbR^n$. (Equivalently, their intersection is transversal if $\codim(N_1\cap N_2) = \codim(N_1) + \codim(N_2)$.) In the transversal case, there is an exact sequence
$$0 \to N_1\cap N_2 \to N_1 \oplus N_2 \to \mbR^n \to 0.$$
Given weights $µ_1$ and $µ_2$ for $N_1$ and $N_2$, respectively, we denote by $µ_1\cap µ_2$ the weight on $N_1\cap N_2$ that satisfies $(µ_1\cap µ_2)\wedge µ_{\mr{std}} = µ_1\wedge µ_2$ in the sense of \eqref{eq:prod_of_weight}. The next lemma is easily checked.

\begin{lem}\label{lem:inter_subspace}
Let $[N_1,µ_1],[N_2,µ_2]\subseteq \mbR^n$ be weighted linear subspaces, viewed as $δ$-forms. Assume that their intersection is transverse. Then
$$[N_1,µ_1]\wedge [N_2,µ_2] = [N_1\cap N_2, µ_1\cap µ_2].$$
\end{lem}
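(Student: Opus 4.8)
The plan is to unwind Definition \ref{def:delta_form_inter_prod} directly. First I would record that the exterior product of two weighted subspaces is again a weighted subspace,
$$S\times T = [N_1,µ_1]\times [N_2,µ_2] = [N_1\times N_2,\ µ_1\wedge µ_2],$$
a constant-coefficient cycle of tridegree $(0,0,r_1+r_2)$ on $\mbR^n\times\mbR^n$, where $r_i=\codim N_i$. Since a full weighted subspace carries no smooth coefficient and has no proper faces, $S\times T$ is both $d'$- and $d''$-closed, and by the Leibniz rule \eqref{eq:Leibniz_divisor_inter} together with Cor.~\ref{cor:div_inter_properties} every partial product $D_j\cdots D_n\cdot(S\times T)$ remains a closed, constant-coefficient cycle. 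The whole computation of $\Delta\cdot(S\times T)=D_1\cdots D_n\cdot(S\times T)$ therefore stays within such cycles and is accessible through the combinatorial divisor formula \eqref{eq:div_inter_combinatorial} of Lem.~\ref{lem:div_inter_combinatorial}, where the coefficients stay constant throughout.

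Next I would pin down the geometry. Writing $x_1,\ldots,x_n,y_1,\ldots,y_n$ for the coordinates on $\mbR^n\times\mbR^n$, the divisor $D_i=d'd''\max\{x_i,y_i\}$ has bend locus $\{x_i=y_i\}$. Transversality $N_1+N_2=\mbR^n$ makes $\Delta$ and $N_1\times N_2$ themselves transverse in $\mbR^n\times\mbR^n$, with set-theoretic intersection $g(N_1\cap N_2)$ for the diagonal $g\colon\mbR^n\to\mbR^n\times\mbR^n$, $g(z)=(z,z)$. Hence each successive intersection with $\{x_i=y_i\}$ drops the dimension by one, the support of $\Delta\cdot(S\times T)$ is exactly $g(N_1\cap N_2)$, and, being a closed constant-coefficient cycle on a single subspace, it equals $[g(N_1\cap N_2),ρ]$ for some weight $ρ$. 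Applying $p_{1,*}$ and using $p_1\circ g=\mr{id}$, so that $g$ identifies $N_1\cap N_2$ isomorphically with its image, yields $S\wedge T=[N_1\cap N_2,ρ']$ for the weight $ρ'$ corresponding to $ρ$. It then remains to identify $ρ'=µ_1\cap µ_2$.

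The main obstacle is precisely this weight bookkeeping: one must show that the cumulative output of the normal-vector factors $\partial(φ-φ_τ)/\partial n_{σ,τ}$ from \eqref{eq:div_inter_combinatorial}, together with the wedge relations $µ=µ_τ\wedge n_{σ,τ}$ of \eqref{eq:prod_of_weight}, applied $n$ times, reproduces the determinant identity $(µ_1\cap µ_2)\wedge µ_{\mr{std}}=µ_1\wedge µ_2$ that defines $µ_1\cap µ_2$ through the sequence $0\to N_1\cap N_2\to N_1\oplus N_2\to\mbR^n\to 0$. To keep this transparent I would choose coordinates adapted to the flag $N_1\cap N_2\subseteq N_1,N_2\subseteq\mbR^n$, so that $N_1$ and $N_2$ become coordinate subspaces and $µ_1,µ_2$ scalar multiples of the standard coordinate weights. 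In such coordinates each $D_i$ either imposes a single equality $x_i=y_i$ between two free coordinates or forces one of $x_i,y_i$ to vanish; in each case the kink of $\max\{x_i,y_i\}$ contributes one definite scalar factor fixed by the weight relation \eqref{eq:prod_of_weight}, and these scalars simply multiply, so that $ρ'$ can be read off directly and matched against the determinant identity. Since both sides of the defining relation for $µ_1\cap µ_2$ scale by $|\det|$ under a linear change of coordinates, and the $\wedge$-product commutes with pull-back by Thm.~\ref{thm:main}~(2), the adapted-coordinate computation proves $ρ'=µ_1\cap µ_2$ in general. (Alternatively, one reduces to the complementary case $N_1\cap N_2=0$ by passing to the quotient $\mbR^n/(N_1\cap N_2)$, along which each $[N_i,µ_i]$ is a pull-back.)
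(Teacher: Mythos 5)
The paper gives no argument for this lemma---it is introduced with ``The next lemma is easily checked''---so there is no official proof to compare with line by line; your proposal is a correct way of carrying out that check and is surely the intended route: unwind Def.~\ref{def:delta_form_inter_prod}, observe that $S\times T=[N_1\times N_2,\mu_1\wedge\mu_2]$ is a $d'$- and $d''$-closed cycle with constant coefficients, compute the successive divisor intersections via Lem.~\ref{lem:div_inter_combinatorial}, and identify the resulting weight. Your appeals to Thm.~\ref{thm:main}~(2) and Cor.~\ref{cor:div_inter_properties} are not circular, since Lem.~\ref{lem:inter_subspace} enters the paper only afterwards, in Lem.~\ref{lem:inter_transversal} and §\ref{ss:fan_displacement}.

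Two spots should be tightened. First, ``each successive intersection with $\{x_i=y_i\}$ drops the dimension by one'' is not, as written, a formal consequence of transversality of the end configuration; it follows either from the count that the $n$ hyperplane sections must lower the dimension by $n$ in total while each lowers it by at most one, or, in your adapted coordinates, from the fact that transversality says exactly that every index $i$ lies in $I_1\cup I_2$ when $N_j=\mathrm{span}(e_i\colon i\in I_j)$. This point matters because it rules out the degenerate case where some $\max\{x_i,y_i\}$ restricts to a linear function on the intermediate support, in which case Lem.~\ref{lem:div_inter_combinatorial} would make that divisor intersection \emph{zero} rather than merely fail to cut. Second, the phrase ``both sides of the defining relation for $\mu_1\cap\mu_2$ scale by $|\det|$'' glosses over the actual verification that pull-back of weighted subspaces along a linear isomorphism $A$ is compatible with the operation $(\mu_1,\mu_2)\mapsto\mu_1\cap\mu_2$: the pull-back weight is $|\det A|\,A^{-1}\mu_i$, while $A^{-1}\mu_{\mathrm{std}}=|\det A|^{-1}\mu_{\mathrm{std}}$, and one must check that these determinant factors cancel against the bilinearity of $\cap$ (they do). The bookkeeping disappears entirely if you normalize $|\det A|=1$, which is always possible by composing with a diagonal scaling, since such scalings preserve coordinate subspaces. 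With these points filled in, your adapted-coordinate computation---all contraction factors in \eqref{eq:div_inter_combinatorial} equal to $1$ for standard normal vectors, plus a short exterior-algebra sign check identifying the standard weight on $N_1\cap N_2$ with $\mu_1\cap\mu_2$---completes the proof.
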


Let $\mcT_1$ and $\mcT_2$ be polyhedral complexes on $\mbR^n$ which are \emph{pure of codimension $r_1$ and $r_2$}, respectively. By this we mean that $\mcT_i$ agrees with the set of faces of all $σ_i\in \mcT_i^{r_i}$. Then $\mcT_1$ and $\mcT_2$ are said to \emph{intersect transversally} if, for all pairs $(σ_1,σ_2)\in \mcT^{r_1}\times \mcT^{r_2}$, the intersection $σ_1\cap σ_2$ is either empty or of codimension $r_1+r_2$ and not contained in the union of boundaries $\partial σ_1\cup \partial σ_2$. Note that then $N_{σ_1}$ and $N_{σ_2}$ intersect transversally whenever $σ_1\cap σ_2 \neq \emptyset$.

Assume the above $\mcT_i$ to intersect transversally and let $\mcS$ be the polyhedral complex of all $σ_1\cap σ_2$, $σ_i \in \mcT_i$. Then $\mcS$ is pure of codimension $r_1+r_2$ and every top-dimensional $τ\in \mcS^{r_1+r_2}$ determines a \emph{unique} pair $(σ_1,σ_2)\in \mcT^{r_1}_1\times \mcT^{r_2}_2$ such that $τ = σ_1\cap σ_2$.

\begin{lem}\label{lem:inter_transversal}
Let $\mcT_1$ and $\mcT_2$ be transversally intersecting weighted polyhedral complexes of pure codimensions $r_1$ and $r_2$, respectively. Let
$$T_1 = \sum_{σ \in \mcT_1^{r_1}} α_σ \wedge [σ,µ_σ]\ \ \ \text{and}\ \ \ T_2 = \sum_{σ\in \mcT_2^{r_2}} β_σ \wedge [σ,ν_σ]$$
be $δ$-forms. Then
\begin{equation}\label{eq:inter_transversal}
T_1\wedge T_2 = \sum_{(σ_1,σ_2)\in \mcT^{r_1}\times \mcT^{r_2},\ σ_1\cap σ_2 \neq \emptyset} α_{σ_1}\wedge β_{σ_2} \wedge [σ_1\cap σ_2, µ_{σ_1}\cap ν_{σ_2}].
\end{equation}
\end{lem}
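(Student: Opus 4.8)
The plan is to reduce the assertion to the linear case already settled in Lemma \ref{lem:inter_subspace}. First I would record that $T_1\wedge T_2$ is subordinate to $\mcS$. By the trihomogeneity in Thm. \ref{thm:main}(1) the product lies in $B^{\bullet,\bullet,r_1+r_2}$, and the support of any $\wedge$-product is contained in the intersection of supports (from $S\wedge T=p_{1,*}(\Delta\wedge(S\times T))$ one reads off $\Supp(S\wedge T)\subseteq\Supp S\cap\Supp T$). Hence $\Supp(T_1\wedge T_2)\subseteq\bigcup_{σ_1,σ_2}(σ_1\cap σ_2)$, which by transversality has codimension $r_1+r_2$. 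A current of tridegree codimension $r_1+r_2$ supported on this set is a locally finite sum $\sum_{τ\in\mcS^{r_1+r_2}}γ_τ\wedge[τ,λ_τ]$ over the top cells of $\mcS$. It remains to identify each $γ_τ$ and $λ_τ$, recalling that every such $τ$ equals $σ_1\cap σ_2$ for a \emph{unique} pair $(σ_1,σ_2)\in\mcT_1^{r_1}\times\mcT_2^{r_2}$.

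The coefficient $γ_τ\wedge[τ,λ_τ]$ may be computed near any interior point of $τ$, and the crux is that the $\wedge$-product is \emph{local} there. By Def. \ref{def:delta_form_inter_prod} one has $T_1\wedge T_2=p_{1,*}(D_1\cdots D_n\cdot(T_1\times T_2))$; each divisor intersection $D_i\cdot(-)$ is computed cell by cell through the local formula \eqref{eq:div_inter_combinatorial} of Lemma \ref{lem:div_inter_combinatorial}, while $p_{1,*}$ restricts to an isomorphism onto its image near the diagonal. Consequently the germ of $T_1\wedge T_2$ at a generic $x\in τ^\circ$ depends only on the germs of $T_1$ and $T_2$ at $x$. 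Choosing, by transversality, $x\in σ_1^\circ\cap σ_2^\circ$, one has $T_1|_U=α_{σ_1}\wedge[L_{σ_1}\cap U,µ_{σ_1}]$ and $T_2|_U=β_{σ_2}\wedge[L_{σ_2}\cap U,ν_{σ_2}]$ on a small ball $U\ni x$, where $L_{σ_i}$ is the affine span of $σ_i$. Extending $α_{σ_1},β_{σ_2}$ to smooth forms on $\mbR^n$ I replace $T_1,T_2$ by the global $δ$-forms $\wt T_1=α_{σ_1}\wedge[L_{σ_1},µ_{σ_1}]$ and $\wt T_2=β_{σ_2}\wedge[L_{σ_2},ν_{σ_2}]$, which agree with $T_1,T_2$ on $U$; by locality, $γ_τ\wedge[τ,λ_τ]$ is the $τ$-coefficient of $\wt T_1\wedge\wt T_2$.

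The remaining computation of $\wt T_1\wedge\wt T_2$ is purely formal. Since $\wedge$ extends the piecewise-smooth module action \eqref{eq:def_pws_product}, the extended smooth forms can be pulled out, so that by the associativity and graded-commutativity of Thm. \ref{thm:main}(1),
$$\wt T_1\wedge\wt T_2=α_{σ_1}\wedge β_{σ_2}\wedge[L_{σ_1},µ_{σ_1}]\wedge[L_{σ_2},ν_{σ_2}],$$
with no intervening sign because each current $[L_{σ_i},\cdot]$ has even total degree $2r_i$. Applying Lemma \ref{lem:inter_subspace} to the transverse affine subspaces $L_{σ_1},L_{σ_2}$ (reducing to the linear case by a translation, under which $\wedge$ is invariant) gives $[L_{σ_1},µ_{σ_1}]\wedge[L_{σ_2},ν_{σ_2}]=[L_{σ_1}\cap L_{σ_2},µ_{σ_1}\cap ν_{σ_2}]$. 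Restricting the smooth forms to $L_{σ_1}\cap L_{σ_2}\supseteq τ$ yields $γ_τ=α_{σ_1}|_τ\wedge β_{σ_2}|_τ$ and $λ_τ=µ_{σ_1}\cap ν_{σ_2}$, which is exactly \eqref{eq:inter_transversal}. The \textbf{main obstacle} is the locality assertion of the second paragraph; once it is established, the rest is bookkeeping.
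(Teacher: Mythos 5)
Your proposal is correct and takes essentially the same route as the paper's proof: bound $\Supp(T_1\wedge T_2)$ by $\Supp T_1\cap\Supp T_2$, conclude that the product is a sum $\sum_{τ\in\mcS^{r_1+r_2}}γ_τ\wedge[τ,λ_τ]$ over top cells, and identify each coefficient near $τ^\circ$ by reducing to the transverse subspace case of Lem. \ref{lem:inter_subspace} multiplied by $α_{σ_1}\wedge β_{σ_2}$. The only difference is one of detail: you make explicit the locality of the $\wedge$-product (via the cell-by-cell formula \eqref{eq:div_inter_combinatorial} and injectivity of $p_1$ on the diagonal) and the sign bookkeeping when pulling out the smooth coefficients, both of which the paper leaves implicit.
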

\begin{proof}
Let $\mcS$ be the polyhedral complex generated by all $σ_1\cap σ_2,\ σ_i\in \mcT_i^{r_i}$. Then
$$\Supp (T_1\wedge T_2) \subseteq \Supp T_1\cap \Supp T_2 \subseteq \bigcup_{τ\in \mcS^{r_1+r_2}} τ,$$
so $T_1\wedge T_2 = \sum_{τ\in \mcS^{r_1+r_2}} γ_τ\wedge [τ,µ_τ]$ for certain forms $γ_τ$ and weights $µ_τ$. Each $γ_τ$ is uniquely determined by its restriction to the relative interior $τ^\circ\subseteq τ$. Also, every occurring $τ$ is in a unique way the intersection $σ_1\cap σ_2$ of top-dimensional $σ_i\in \mcT_i$. On an open neighborhood of $τ^\circ$, the situation then agrees with a subspace intersection as in Lem. \ref{lem:inter_subspace}, multiplied by $α_{σ_1}\wedge β_{σ_2}$, and the claim follows from Thm. \ref{thm:main} and Lem. \ref{lem:inter_subspace}.
\end{proof}

Recall that a sequence (resp. net) of currents $(T_i)_{i\in I}$ \emph{converges weakly} to a current $T$ if for every test form $η\in A_c$, the sequence (resp. net) $T_i(η)$ converges to $T(η)$. Given a current $T$ on $\mbR^n$ and a vector $v\in \mbR^n$, we write $λ_v(x) = x + v$ and denote by $v+T = λ_{v,*}T=λ_{-v}^*T$ the $v$-translated current.

\begin{prop}\label{prop:limit_product}
Let $S,T\in B(\mbR^n)$ be $δ$-forms and $v\in \mbR^n$ a vector. Then there is the weak convergence
$$S\wedge (εv+T) \longrightarrow S\wedge T,\ \ \ ε\longrightarrow 0.$$
\end{prop}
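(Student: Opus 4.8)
The plan is to reduce the statement to a continuity property of divisor intersection by transferring the translation from $T$ onto the defining divisors, and then to establish that continuity combinatorially via Lem. \ref{lem:div_inter_combinatorial}.

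First I would absorb the translation into the diagonal. Set $U := S\times T$ and let $h_ε\colon \mbR^n\times\mbR^n \to \mbR^n\times\mbR^n$, $(x,y)\mapsto(x,y+εv)$, be the affine isomorphism, so that $S\times(εv+T) = h_{ε,*}U$. Writing $Δ\cdot\ = D_1\cdots D_n\cdot\ $ with $D_i = d'd''\max\{x_i,y_i\}$, the projection formula for divisor intersection (Lem. \ref{lem:div_inter_proj_formula}) applied to the bijective map $h_ε$, once per divisor and using that divisors commute (Lem. \ref{lem:div_inter_commutative}), gives
$$Δ\cdot h_{ε,*}U = h_{ε,*}\big(D_1^ε\cdots D_n^ε\cdot U\big),\qquad D_i^ε := h_ε^* D_i = d'd''\max\{x_i,\,y_i+εv_i\}.$$
Since $p_1\circ h_ε = p_1$, applying $p_{1,*}$ yields the key reformulation
$$S\wedge(εv+T) = p_{1,*}\big(D_1^ε\cdots D_n^ε\cdot U\big),$$
in which $U$ is \emph{fixed} and only the kink loci $\{x_i=y_i+εv_i\}$ of the divisors move. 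It therefore suffices to show that $D_1^ε\cdots D_n^ε\cdot U$ converges weakly to $D_1^0\cdots D_n^0\cdot U = Δ\cdot U$, and that $p_{1,*}$ passes through this limit; the limit current is then $p_{1,*}(Δ\cdot U)=S\wedge T$ by Def. \ref{def:delta_form_inter_prod}.

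Next I would analyze the divisor intersection combinatorially. Fixing a test form $η$ with compact support $K$, only finitely many polyhedra of a complex $\mcT$ subordinate to $U$ meet $p_1^{-1}(K)$. Refining $\mcT$ by the arrangement $\{x_i=y_i+εv_i\}_i$ and iterating \eqref{eq:div_inter_combinatorial}, the current $D_1^ε\cdots D_n^ε\cdot U$ is a locally finite sum $\sum_τ β_τ^ε\wedge[τ^ε,µ]$ over cells $τ^ε$ lying on the shifted diagonal $\{x=y+εv\}$. Each coefficient $β_τ^ε$ is a fixed multilinear expression in the directional slopes $\partial φ_i^ε/\partial n$ times restrictions $α_σ\vert_{τ^ε}$; crucially, for $x_i\neq y_i$ the slope of $\max\{x_i,y_i+εv_i\}$ is locally constant in $ε$ and equals that of $\max\{x_i,y_i\}$. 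Hence as $ε\to 0$ the cells $τ^ε$ translate affinely onto the corresponding cells $τ^0$ of $Δ\cdot U$, and the $β_τ^ε$, being bounded jumps of these slopes across the moving kink, converge to $β_τ^0$. Weak convergence then follows by dominated convergence: over $K$ only finitely many cells contribute, each integral $\int_{[τ^ε,µ]}β_τ^ε\wedge η$ has a uniformly bounded, almost-everywhere convergent integrand after pulling the affinely moving domain back to $τ^0$, and cells that degenerate as $ε\to 0$ contribute nothing in the limit since their volume tends to $0$ while their coefficients stay bounded. Finally $p_{1,*}$ is weakly continuous here: the relevant part of $\Supp(D_1^ε\cdots D_n^ε\cdot U)$ lies on the shifted diagonal, over which $p_1$ is a homeomorphism, and for $ε\in[-ε_0,ε_0]$ it stays in a fixed compact subset of $p_1^{-1}(K)$; multiplying $η$ by a cutoff equal to $1$ there reduces the push-forward to pairing against a fixed compactly supported form, through which the limit passes.

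The main obstacle is the behaviour at $ε=0$, where $S$ and $T$ need not meet transversally, so the combinatorial type of the refinement typically jumps as $ε$ crosses $0$. The whole point is that the slope jump defining each coefficient is insensitive to this: it is taken across the moving hyperplane $\{x_i=y_i+εv_i\}$ and depends only on the signs of $x_i-y_i$ away from the limiting locus $\{x_i=y_i\}$, which are unchanged for small $ε$. Making this uniform through the $n$-fold iteration — keeping the intermediate currents $D_k^ε\cdots D_n^ε\cdot U$ controlled, with bounded local mass, so that each further multiplication by the discontinuous piecewise-constant form $d''φ_k^ε$ remains continuous in the limit (conveniently organized through the simplified identity \eqref{eq:div_inter_simplified}) — is the technical heart, and is where I would spend the most care.
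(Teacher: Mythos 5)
Your opening reduction is correct and is a nice observation: writing $S\times(\varepsilon v+T)=h_{\varepsilon,*}(S\times T)$ and moving the translation into the divisors via Lem.~\ref{lem:div_inter_proj_formula} does give $S\wedge(\varepsilon v+T)=p_{1,*}\bigl(D_1^{\varepsilon}\cdots D_n^{\varepsilon}\cdot U\bigr)$ with $U$ fixed, and your cutoff argument for passing $p_{1,*}$ through a weak limit is sound, since all the supports stay in a fixed compact set over $p_1^{-1}(K)$. The genuine gap is in the step you yourself flag as the technical heart: the claimed convergence $D_1^{\varepsilon}\cdots D_n^{\varepsilon}\cdot U\to\Delta\cdot U$ via a cell-by-cell correspondence. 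The assertion that ``the cells $\tau^{\varepsilon}$ translate affinely onto the corresponding cells $\tau^{0}$'' with coefficients $\beta_\tau^{\varepsilon}\to\beta_\tau^{0}$ fails precisely in the situation the proposition is about, namely when $\Supp U$ meets the diagonal non-transversally (in the transverse case the statement is already immediate from Lem.~\ref{lem:inter_transversal}). Concretely, take $S=T=[N,\mu]$ the same line in $\mbR^2$ and $v\notin N$: for $\varepsilon\neq 0$ the set $\{x=y+\varepsilon v\}\cap(N\times N)$ is empty, so $D_1^{\varepsilon}D_2^{\varepsilon}\cdot U=0$, while at $\varepsilon=0$ the diagonal meets $N\times N$ in a full line; there is no correspondence of cells whatsoever, and the vanishing of $\Delta\cdot(N\times N)$ comes from the corner-locus computation collapsing, not from a limit of slope jumps. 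Similarly, for $S=T$ a standard tropical line in $\mbR^2$, whole two-dimensional cells of $S\times S$ (e.g.\ $R\times R$ for a ray $R$) lie \emph{inside} the non-linearity locus $\{x_i=y_i\}$ at $\varepsilon=0$ but meet $\{x_i=y_i+\varepsilon v_i\}$ in the empty set for $\varepsilon\neq 0$; the coefficient of the limit current at the origin then arises from cancellations enforced by the balancing condition in a degenerate configuration that simply has no counterpart at $\varepsilon\neq0$. So there is no cell-wise limit for ``dominated convergence'' to act on; what you would need is a joint weak-continuity statement for $(\phi,X)\mapsto d'd''\phi\cdot X$ as both the piecewise linear function and the intermediate current vary, and that is exactly the difficulty, not a tool you can invoke.

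Note also that the convergence you assert is essentially equivalent to the fan displacement rule, Prop.~\ref{prop:fan_displacement}, which the paper deduces \emph{from} Prop.~\ref{prop:limit_product}; establishing it combinatorially would amount to re-proving that rule (as in Rau or Katz), not to an application of a convergence theorem, so the proposal as written is close to circular. The paper's proof avoids the issue by a homotopy-type argument in which the parameter $\varepsilon$ is promoted to a coordinate: on $\mbR\times\mbR^n$ one writes $\delta_0-\delta_\varepsilon=\partial'\rho_\varepsilon$ with $\rho_\varepsilon=-d''x\wedge\bigl[[0,\varepsilon],\mu_{\mr{std}}\bigr]$, pulls $T$ back along $f(\varepsilon,y)=y-\varepsilon v$ so that the entire moving family becomes a single fixed $\delta$-form $f^*T$, and then reduces $S\wedge\bigl(T-(\varepsilon v+T)\bigr)\to 0$, via the Leibniz rules for $\partial'$, $d'_P$, $d'$ and the fact that weak limits commute with $d'$ by duality \eqref{eq:signs_for_currents}, to the elementary claim that $p_1^*\rho_\varepsilon\wedge\gamma\to 0$ for every polyhedral current $\gamma$ (a volume estimate on the strip $[0,\varepsilon]\times\mbR^n$). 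Only products of a fixed current with piecewise smooth forms of shrinking support ever appear, so no combinatorial degeneration has to be tracked. If you want to salvage your reduction, you would need an input of this kind; the cell-wise convergence cannot be had for free.
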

\begin{proof}
Consider on $\mbR$ the piecewise smooth form $ρ_ε = - d''x \wedge \big[[0,ε], µ_{\mr{std}}\big]$. Its boundary $\partial'ρ_ε = δ_0 - δ_ε$ is the difference of the Dirac measures at $0$ and $ε$. Next, consider the map $f\colon \mbR\times \mbR^n \to \mbR^n,\ (ε,y)\mapsto  y - εv$. The Leibniz rule yields
\begin{equation}\label{eq:omega}
ω_ε := p_1^*(δ_0 - δ_ε)\wedge f^*T = \partial' (p_1^*ρ_ε \wedge f^*T) + p_1^*ρ_ε\wedge f^*\partial'T.
\end{equation}
Note that $p_1^*ρ_ε$ is piecewise smooth, making the $\wedge$-products on the right hand side straightforward, cf. Ex. \ref{ex:pws_delta_product}. Now Lem. \ref{lem:inter_transversal} implies that
$$p_1^*δ_{ε}\wedge f^*T = \{ε\} \times (εv + T),\ \ \ ε\in \mbR,$$
and hence
\begin{equation}
\label{eq:weak_convergence_difference}
S\wedge (T - (εv + T)) = S\wedge p_{2,*} ω_ε.
\end{equation}
Our task is to show that this expression converges weakly to $0$ as $ε\to 0$. The projection formula, cf. Thm. \ref{thm:main}, allows to rewrite \eqref{eq:weak_convergence_difference} as
$$p_{2,*}(p_2^*S\wedge ω_ε).$$
Now observe that if $(X_i)_{i\in I}\to X$ is a weakly convergent net of currents on $\mbR\times \mbR^n$ with $X$ and all $X_i$ of compact support over $\mbR^n$, then $(p_{2,*}X_i)_{i\in I} \to p_{2,*}X$ by definition \eqref{eq:push_for_currents}. So it remains to show $p_2^*S\wedge ω_ε\to 0$ as $ε\to 0$.

\emph{Claim.} For \emph{every} polyhedral current $γ\in P(\mbR\times \mbR^n)$, there is the weak convergence $p_1^*ρ_ε\wedge γ \to 0$ as $ε\to 0$. This is straightforward: It is enough to consider the case $γ = α\wedge [σ,µ]$ in which case there are the two possibilities that $p_1(σ)$ is of dimension $0$ or $1$. In the $0$-dimensional case, $p_1^*ρ_ε\vert_{σ} = 0$ and we are done. In the $1$-dimensional case, a simple volume argument shows that for every compactly supported $(\dim σ,\dim σ)$-form $η$,
$$\int_{\big[σ\, \cap\, ([0,\,ε]\times \mbR^n),\, µ\big]}η \longrightarrow 0,\ \ \ ε\longrightarrow 0,$$
which implies the claim.

It follows that $p_2^*S \wedge p_1^*ρ_ε \wedge f^*\partial'T \to 0$ as $ε\to 0$ and it is only left to show, cf. \eqref{eq:omega}, that
\begin{equation}\label{eq:convergence_to_show}
p_2^*S \wedge \partial' (p_1^*ρ_ε \wedge f^*T) \longrightarrow 0,\ \ \ ε\longrightarrow 0.
\end{equation}
Applying the Leibniz rule for $\partial'$ and using again the above claim, one is reduced to proving
$$\partial' \big(p_2^*S \wedge p_1^*ρ_ε \wedge f^*T\big) \longrightarrow 0,\ \ \ ε\longrightarrow 0.$$
Now for every weakly convergent net of currents $(X_i)_{i\in I} \to X$, the sequence of derivatives $(d'X_i)_{i\in I} \to d'X$ converges weakly, which follows immediately from the definition in \eqref{eq:signs_for_currents}. Using the above claim once more, it is hence enough to show
$$d'_P \big(p_2^*S \wedge p_1^*ρ_ε \wedge f^*T\big) \longrightarrow 0,\ \ \ ε\longrightarrow 0.$$
Applying the Leibniz rule for $d'_P$ and the vanishing $d'_Pρ_ε = 0$, this last statement follows from yet another application of the above claim. The proof is complete.
\end{proof}

Let $\mcT_1$ and $\mcT_2$ be \emph{finite} polyhedral complexes on $\mbR^n$, pure of codimensions $r_1$ and $r_2$ respectively. A vector $v\in \mbR^n$ is called \emph{generic} for $\mcT_1$ and $\mcT_2$ if there exists $ε_0>0$ such that $\mcT_1$ and $εv + \mcT_2$ intersect transversally for all $0<ε<ε_0$. For not necessarily finite $\mcT_1$ and $\mcT_2$, a vector $v$ is called generic if it is generic for all finite subcomplexes of the same pure codimensions. Generic vectors in this sense always exist.

\begin{construction}\label{constr:fan_displacement}
Let $v$ be a generic vector for two polyhedral complexes $\mcT_1$ and $\mcT_2$ that are pure of codimensions $r_1$ and $r_2$, respectively. Let
$$T_1 = \sum_{σ\in \mcT_1^{r_1}} α_σ\wedge [σ,µ_σ],\ \ \ T_2 = \sum_{σ\in \mcT_2^{r_2}} β_σ \wedge [σ,ν_σ]$$
be $δ$-forms. Define their \emph{$v$-displacement product} with respect to $\mcT_1$ and $\mcT_2$ as
\begin{equation}\label{eq:v_displacement_product}
T_1\cdot_v T_2 := \sum_{(σ_1,σ_2)\in \mcT_1^{r_1}\times \mcT_2^{r_2},\ σ_1\cap (εv+ σ_2)\neq \emptyset\ \text{for }ε\searrow 0} α_{σ_1}\wedge β_{σ_2} \wedge [σ_1\cap σ_2, µ_{σ_1}\cap ν_{σ_2}].
\end{equation}
The sum here is over all $(σ_1,σ_2)$ such that $σ_1\cap (εv + σ_2)\neq \emptyset$ for all sufficiently small $ε$. Note that $v$ need not be generic for subdivisions of $\mcT_1$ and $\mcT_2$ anymore, which is why the definition depends on their choice.
\end{construction}

\begin{prop}\label{prop:fan_displacement}
Let $T_1$ and $T_2$ be $δ$-forms with subordinate polyhedral complexes $\mcT_1$ and $\mcT_2$ as above. Assume $v$ is generic for $\mcT_1$ and $\mcT_2$. Then the $v$-displacement product (with respect to the $\mcT_i$) computes the $\wedge$-product,
\begin{equation}\label{eq:fan_displacement}
T_1\cdot_v T_2 = T_1\wedge T_2.
\end{equation}
In particular, the $v$-displacement product is independent of the choices $\mcT_1$, $\mcT_2$ and $v$.
\end{prop}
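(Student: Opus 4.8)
The plan is to realize $T_1\cdot_v T_2$ as the weak limit of the \emph{transversal} products $T_1\wedge(εv+T_2)$ as $ε\searrow 0$, and to identify this same limit with $T_1\wedge T_2$ by means of Prop.~\ref{prop:limit_product}. Uniqueness of weak limits then forces $T_1\cdot_v T_2=T_1\wedge T_2$, and since the right-hand side visibly does not depend on $\mcT_1$, $\mcT_2$ or $v$, the asserted independence is immediate.

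First I would record the two convergences. Applying Prop.~\ref{prop:limit_product} with $S=T_1$ and $T=T_2$ gives
$$T_1\wedge(εv+T_2)\longrightarrow T_1\wedge T_2,\ \ \ ε\searrow 0.$$
On the other hand, genericity of $v$ provides $ε_0>0$ such that $\mcT_1$ and $εv+\mcT_2$ intersect transversally for all $0<ε<ε_0$. Writing $εv+T_2=\sum_{σ_2}(εv+β_{σ_2})\wedge[εv+σ_2,ν_{σ_2}]$, where the weights are unchanged because translation acts trivially on $\det N_{σ_2}$, Lem.~\ref{lem:inter_transversal} yields for each such $ε$ the explicit formula
$$T_1\wedge(εv+T_2)=\sum_{(σ_1,σ_2):\ σ_1\cap(εv+σ_2)\neq\emptyset} α_{σ_1}\wedge(εv+β_{σ_2})\wedge[σ_1\cap(εv+σ_2),\,µ_{σ_1}\cap ν_{σ_2}].$$
Here $N_{εv+σ_2}=N_{σ_2}$, so the weight $µ_{σ_1}\cap ν_{σ_2}$ on $N_{σ_1}\cap N_{σ_2}$ is independent of $ε$.

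The core of the argument is to let $ε\searrow 0$ in this sum and recover exactly the expression \eqref{eq:v_displacement_product} defining $T_1\cdot_v T_2$. Fixing a test form $η\in A_c$, local finiteness ensures that only finitely many pairs $(σ_1,σ_2)$ have $σ_1\cap(εv+σ_2)$ meeting $\Supp η$, uniformly for small $ε$. For each pair the set $\{ε\ge 0:\ σ_1\cap(εv+σ_2)\neq\emptyset\}$ is the preimage of the convex Minkowski difference $σ_1-σ_2$ under $ε\mapsto εv$, hence an interval; consequently, for all sufficiently small $ε$ the pairs appearing above are precisely those with $σ_1\cap(εv+σ_2)\neq\emptyset$ for $ε\searrow 0$, which is the index set of \eqref{eq:v_displacement_product}. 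It then remains to establish the single-term weak convergence
$$α_{σ_1}\wedge(εv+β_{σ_2})\wedge[σ_1\cap(εv+σ_2),\,µ_{σ_1}\cap ν_{σ_2}]\longrightarrow α_{σ_1}\wedge β_{σ_2}\wedge[σ_1\cap σ_2,\,µ_{σ_1}\cap ν_{σ_2}].$$

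This single-term convergence is the step I expect to be the main obstacle, although it is elementary in nature. By transversality the affine span $L_{σ_1}\cap(εv+L_{σ_2})$ is directed by the fixed space $N_{σ_1}\cap N_{σ_2}$, so it equals $p_ε+(N_{σ_1}\cap N_{σ_2})$ with $p_ε$ depending affinely on $ε$ and $p_ε\to p_0$. Translating back by $p_ε-p_0$ turns the integral of $α_{σ_1}\wedge(εv+β_{σ_2})\wedge η$ over the moving polyhedron $σ_1\cap(εv+σ_2)$ into an integral over domains that converge to $σ_1\cap σ_2$, with integrand varying continuously in $ε$ and supported in the fixed compact set $\Supp η$; dominated convergence then gives the claim. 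Summing the finitely many terms shows $T_1\wedge(εv+T_2)\to T_1\cdot_v T_2$ weakly, and comparison with the limit supplied by Prop.~\ref{prop:limit_product} proves \eqref{eq:fan_displacement}.
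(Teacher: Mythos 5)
Your proposal is correct and takes essentially the same route as the paper's proof: both compute $T_1\wedge(εv+T_2)$ for small $ε>0$ via the transversal-intersection formula of Lem.~\ref{lem:inter_transversal}, show this converges weakly to $T_1\cdot_v T_2$ term by term, and identify the limit with $T_1\wedge T_2$ using Prop.~\ref{prop:limit_product}. The only differences are in bookkeeping: the paper reduces to finite complexes by a partition of unity where you invoke local finiteness against a fixed test form, and it asserts without detail the index-set stabilization and single-term convergence that you justify via closedness/convexity of the Minkowski difference and dominated convergence.
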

\begin{proof}
Both sides of \eqref{eq:fan_displacement} are computed locally, so we may assume $\mcT_1$ and $\mcT_2$ to be finite by a partition of unity argument. Lem. \ref{lem:inter_transversal} then shows that for all sufficiently small $ε>0$,
$$T_1\wedge (εv + T_2) = \sum_{(σ_1,σ_2)\in \mcT_1^{r_1}\times \mcT_2^{r_2},\ σ_1\cap (εv+ σ_2)\neq \emptyset} α_{σ_1}\wedge (εv + β_{σ_2}) \wedge [σ_1\cap (εv + σ_2), µ_{σ_1}\cap ν_{σ_2}].$$
The intersection $σ_1\cap (εv + σ_2)$ being non-empty and transverse for all sufficiently small $ε$ implies that $σ_1\cap σ_2$ is non-empty and of codimension $r_1 + r_2$. Moreover in this case,
$$α_{σ_1}\wedge (εv + β_{σ_2}) \wedge [σ_1\cap (εv + σ_2), µ_{σ_1}\cap ν_{σ_2}] \longrightarrow α_{σ_1}\wedge β_{σ_2} \wedge [σ_1\cap σ_2, µ_{σ_1}\cap ν_{σ_2}]$$
in the weak sense. It follows that $T_1\wedge (εv+T_2) \to T_1\cdot_v T_2$ in the weak sense.
Prop. \ref{prop:limit_product} on the other hand shows that this limit equals $T_1\wedge T_2$, proving the proposition.
\end{proof}

\begin{bibdiv}
\begin{biblist}
\bib{AR}{article}{
  title = {First steps in tropical intersection theory},
  author = {Allermann, Lars},
  author = {Rau, Johannes},
  journal = {Math. Z.},
  volume = {263},
  number = {3},
  pages = {633--670},
  year = {2010}
}
\bib{BT}{article}{
  title = {A new capacity for plurisubharmonic functions},
  author = {Bedford, Eric},
  author = {Taylor, B. A.},
  journal = {Acta Math.},
  volume = {149},
  pages = {1--40},
  year = {1982}
}
\bib{CLD}{article}{
  title={Formes diff\'{e}rentielles r\'{e}elles et courants sur les espaces de Berkovich},
  author={Chambert-Loir, Antoine},
  author={Ducros, Antoine},
  journal={arXiv preprint, \href{https://arxiv.org/abs/1204.6277}{arXiv:1204.6277}},
  year={2012}
}
\bib{Demailly}{article}{
  title={Complex Analytic and Differential Geometry},
  author={Demailly, Jean--Pierre},
  journal={\newline \href{https://www-fourier.ujf-grenoble.fr/~demailly/manuscripts/agbook.pdf}{Https://www-fourier.ujf-grenoble.fr/$\sim$demailly/manuscripts/agbook.pdf}}
}
\bib{Esterov}{article}{
  title={Tropical varieties with polynomial weights and corner loci of piecewise polynomials},
  author={Esterov, Alexander},
  journal={Mosc. Math. J.},
  volume={12},
  number={1},
  year={2012},
  pages={55--76}
}
\bib{Francois}{article}{
  title = {Cocycles on tropical varieties via piecewise polynomials},
  author = {Francois, Georges},
  journal = {Proc. Amer. Math. Soc.},
  volume = {141},
  number = {2},
  year = {2013},
  pages = {481--497}
}
\bib{FS_tropical}{article}{
  title = {Intersection theory on toric varieties},
  author = {Fulton, William},
  author = {Sturmfels, Bernd},
  journal = {Topology},
  volume = {36},
  number = {2},
  year = {1997},
  pages = {335--353}
}
\bib{Gub_forms_currents}{article}{
  title={Forms and Currents on the Analytification of an Algebraic Variety (After Chambert-Loir and Ducros)},
  author={Gubler, Walter},
  booktitle={Nonarchimedean and Tropical Geometry},
  volume={589},
  pages={1--30},
  year={2016},
  publisher={Springer, Cham}
}
\bib{GK}{article}{
  title={A tropical approach to nonarchimedean Arakelov geometry},
  author={Gubler, Walter},
  author={K{\"u}nnemann, Klaus},
  journal={Algebra Number Theory},
  volume={11},
  number={1},
  pages={77--180},
  year={2017},
  publisher={Mathematical Sciences Publishers}
}
\bib{Katz}{article}{
  title={Tropical intersection theory from toric varieties},
  author={Katz, Eric},
  journal={Collect. Math.},
  volume={63},
  pages={29--44},
  year={2012}
}
\bib{Lag}{article}{
  title={Super currents and tropical geometry},
  author={Lagerberg, Aaron},
  journal={Math. Z.},
  volume={270},
  date={2011},
  pages={1011--1050}
}
\bib{Mikhalkin}{article}{
  title={Tropical geometry and its applications},
  author={Mikhalkin, Grigory},
  booktitle={Proceedings of the ICM, Madrid, Volume II},
  date={2006},
  pages={827--852}
}
\bib{Mih_delta_nonarch}{article}{
  title={$\delta$-Forms on Lubin--Tate space},
  author={Mihatsch, Andreas},
  journal = {Preprint, \href{https://arxiv.org/abs/2112.10018}{arXiv:2112.10018}},
  year={2021}
}
\bib{Rau_thesis}{book}{
  title = {Tropical intersection theory and gravitational descendants},
  author = {Rau, Johannes},
  year = {2009},
  publisher = {PhD Thesis, TU Kaiserslautern, \href{https://kluedo.ub.uni-kl.de/frontdoor/index/index/docId/2122}{https://kluedo.ub.uni-kl.de/frontdoor/index/index/docId/2122}}
}
\bib{ST}{article}{
  title={Elimination theory for tropical varieties},
  author={Sturmfels, Bernd},
  author={Tevelev, Jenia},
  journal={Math. Res. Lett},
  volume={15},
  number={3},
  pages={543--562},
  year={2008}
}
\end{biblist}
\end{bibdiv}
\end{document}